\mathchardef\mhyphen="2D
\theoremstyle{plain}
\newtheorem{thm}{Theorem}
\newtheorem{prop}{Proposition}
\newtheorem{lem}{Lemma}
\newtheorem{cor}{Corollary}
\theoremstyle{definition}
\newtheorem{ex}{Example}
\newtheorem*{question*}{Question}
\theoremstyle{remark}
\newtheorem{rem}[thm]{Remark}
\newtheorem*{claim}{Claim}
\DeclareMathOperator{\Aut}{Aut}
\DeclareMathAlphabet{\mathcalligra}{T1}{calligra}{m}{n}
\DeclareMathOperator{\Hom}{Hom}
\newcommand{\Q}{\mathbb{Q}}
\newcommand{\C}{\mathbb{C}}
\newcommand{\Z}{\mathbb{Z}}
\newcommand{\Oo}{\mathcal{O}}
\newcommand{\X}{\mathcal{X}} 
\newcommand{\Xo}{\mathcal{X}^\circ}
\newcommand{\cu}{\Sigma} 
\newcommand{\Bb}{\mathbf{B}} 
\newcommand{\Bbo}{\mathbf{B}^\circ} 
\newcommand{\pib}{\bm{\pi}} 
\newcommand{\Hig}{\mathbf{Higgs}}
\newcommand{\btcu}{\bm{\tilde{\Sigma}}} 
\newcommand{\tcu}{\tilde{\Sigma}} 
\newcommand{\bsigma}{\bm{\sigma}} 
\newcommand{\gfr}{\mathfrak{g}} 
\newcommand{\tfr}{\mathfrak{t}} 
\newcommand{\Ub}{\bm{U}} 
\newcommand{\Ubt}{\bm{\tilde{U}}} 
\newcommand{\Hit}{\bm{h}}
\newcommand{\ev}{ev} 
\newcommand{\Cc}{\mathbf{C}} 
\newcommand{\CcA}{\mathbf{CA}} 
\newcommand{\ADE}{\mathrm{ADE}}
\newcommand{\BCFG}{\mathrm{BCFG}}
\newcommand{\btSigma}{\bm{\tilde{\Sigma}}}
\newcommand{\btsigma}{\tilde{\bm{\sigma}}}
\newcommand{\bomega}{\bm{\omega}}
\newcommand{\tSc}{\tilde{\mathcal{S}}}
\newcommand{\Sc}{\mathcal{S}}
\newcommand{\Scb}{\bm{S}}
\newcommand{\tScb}{\tilde{\bm{S}}}
\newcommand{\HomS}{\mathscr{H}\hspace{-5pt}\mathcalligra{om}} 
\newcommand{\ExtSO}{\mathscr{E}\hspace{-3.3pt}\mathcalligra{xt}_{\,\mathcal{O}_X}} 
\begin{document}
\begin{abstract}

Any irreducible Dynkin diagram $\Delta$ is obtained from an irreducible Dynkin diagram $\Delta_h$ of type $\ADE$ by folding via graph automorphisms. 
For any simple complex Lie group $G$ with Dynkin diagram $\Delta$ and compact Riemann surface $\cu$, we give a Lie-theoretic construction of families of quasi-projective Calabi-Yau threefolds together with an action of graph automorphisms over the Hitchin base associated to the pair $(\cu, G)$ . 
These give rise to Calabi-Yau orbifolds over the same base. 
Their intermediate Jacobian fibration, constructed in terms of equivariant cohomology, is isomorphic to the Hitchin system of the same type away from singular fibers. 
\end{abstract}

\title{Calabi-Yau orbifolds over Hitchin bases}
\author{Florian Beck}
\address{FB Mathematik, Universit\"at Hamburg, Bundesstrasse 55, 20146 Hamburg, Germany}
\email{florian.beck@uni-hamburg.de}
\maketitle

\tableofcontents

\section{Introduction}
In recent years, it has been realized that the $G$-Hitchin system (\cite{Hit1}, \cite{Hit2})
\begin{equation*}
\Hit:\mathcal{M}_{\mathrm{H}}(\cu, G)\to \Bb(\cu, G), 
\end{equation*}
for a compact Riemann surface $\cu$ of genus $\geq 2$ and a simple complex Lie group $G$ of type $\ADE$, 
are closely related to certain
(quasi-projective) Calabi-Yau threefolds (\cite{DDD}, \cite{DDP}, \cite{Tbranes}, \cite{Tbranes2}).   
For example, if $G$ is a simple adjoint complex Lie group of type $\ADE$, then there exists a family $\pib:\X\to \Bb(\cu,G)$ of quasi-projective Gorenstein Calabi-Yau threefolds over the Hitchin base $\Bb(\cu, G)$ such that the intermediate Jacobian 
\begin{equation*}
J^2(X_b)=H^3(X_b,\C)/\left(F^2H^3(X_b,\C)+H^3(X_b,\Z)\right),\quad X_b:=\pib^{-1}(b), 
\end{equation*}
is isomorphic to the Hitchin fiber $\Hit^{-1}(b)$ for generic $b\in \Bb(\cu,G)$ (\cite{DDP}). 
In this paper, we extend this to a global result for all simple adjoint and simply-connected complex Lie groups of any Dynkin type by constructing families of Calabi-Yau orbifolds/global orbifold stacks\footnote{Since the Hitchin base $\Bb(\cu, G)$ only depends on the Dynkin diagram $\Delta(G)$, it is sufficient to write $\Bb(\cu, \Delta)$, see \ref{ss:hitchinbase}).}
\begin{equation}\label{intro:fam}
[\X/\Cc]\to \Bb(\cu, \Delta)=\Bb(\cu,G), \quad \Delta=\Delta(G).
\end{equation}
Here $\Delta(G)$ is the irreducible Dynkin diagram associated to $G$ which is obtained by a unique irreducible Dynkin diagram $\Delta_h$ of type $\ADE$ by folding via graph automorphisms $\Cc\subset \Aut(\Delta_h)$, i.e. $\Delta=\Delta_h^{\Cc}$.
Moreover, $\pib:\X\to \Bb(\cu, \Delta)$ is a family of quasi-projective Gorenstein Calabi-Yau threefolds with $\Cc$-trivial canonical class. 
More precisely, we prove in Section \ref{s:mhs}:
\begin{thm}\label{thm:thm1}
Let $[\X/\Cc]\to \Bb=\Bb(\cu,\Delta)$ be a family of global Calabi-Yau orbifolds as in (\ref{intro:fam}).
Its intermediate Jacobian fibration 
\begin{equation*}
\mathcal{J}^2([\X^\circ/\Cc]) \to \Bb^\circ, 
\end{equation*}
defined in integral equivariant cohomology, is isomorphic to the $G_{ad}(\Delta)$-Hitchin system over a Zarisiki-open and dense $\Bb^\circ \subset \Bb$ where $G_{ad}(\Delta)$ is the simple adjoint complex Lie group with Dynkin diagram $\Delta$. 
\end{thm}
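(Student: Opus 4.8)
The plan is to work fibrewise over a Zariski-dense open $\Bbo \subseteq \Bb$, to identify the equivariant intermediate Jacobian of $[\Xo_b/\Cc]$ with the $\Cc$-invariant part of the ordinary intermediate Jacobian of the underlying threefold $X_b$, and then to match the latter with the $G_{ad}(\Delta)$-Hitchin fibre by the cameral description of Hitchin fibres together with the folding dictionary relating the $\Cc$-fixed data of $\Delta_h$ to $\Delta$.

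First I would take $\Bbo$ to be the intersection of the Zariski-open loci over which $X_b$ is a smooth quasi-projective Gorenstein Calabi--Yau threefold, the cameral cover $\btcu_b \to \cu$ attached to $b$ is smooth with full Galois group, and the $\Cc$-action on $X_b$ covers the trivial action on $\cu$ and acts on the $\ADE$-surface fibres of $f_b\colon X_b \to \cu$ through $\Cc \subseteq \Aut(\Delta_h)$. Over $\Bbo$ the Leray spectral sequence for $f_b$ is trivial in the relevant range: the $\ADE$-surface fibres are simply connected, so $R^1(f_b)_*\Z = 0$, $R^0(f_b)_*\Z = \Z_\cu$, and $H^3(X_b;\Z) \cong H^1(\cu, R^2(f_b)_*\Z)$, where $R^2(f_b)_*\Z$ restricts on $\cuo$ to the local system with fibre the root lattice $\bLambda_h$ of $\Delta_h$ (spanned by the exceptional curves), carrying the tautological $W_h$-monodromy through $\btcu_b$. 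Since the surface fibres have no transcendental $2$-cohomology, this local system is of weight $2$ and Hodge--Tate type, so $H^3(X_b;\Z)$ is pure of weight $3$ with $h^{3,0}=0$ and $J^2(X_b)$ is an abelian variety; by the $\ADE$ case established in the paper (extending \cite{DDP}) it is identified, canonically and $\Cc$-equivariantly, with the cameral Prym $\Prm_{\bLambda_h}(\btcu_b/\cu)$, i.e. with the $G_{ad}(\Delta_h)$-Hitchin fibre over $b$.

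Next I would pass to the orbifold. As $\Cc$ is finite, $H^\bullet([\Xo_b/\Cc];\Q) = H^\bullet(X_b;\Q)^\Cc$ with its induced pure Hodge structure, so $\mathcal{J}^2([\Xo_b/\Cc])$ is, up to isogeny, the $\Cc$-invariant abelian subvariety of $J^2(X_b)$; the integral lattice is governed by the spectral sequence $H^p(\Cc, H^q(X_b;\Z)) \Rightarrow H^{p+q}_\Cc(X_b;\Z)$, whose contributions in total degree $3$ beyond $H^3(X_b;\Z)^\Cc$ are torsion --- since $H^{\le 2}(X_b;\Z) = H^{\le 2}(\cu;\Z)$ carries the trivial $\Cc$-action --- and hence invisible in $H^3([\Xo_b/\Cc];\C)$. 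Taking $\Cc$-invariants of the description above, $\mathcal{J}^2([\Xo_b/\Cc]) = H^1\!\big(\cu, (R^2(f_b)_*\Z)^\Cc\big)$ with its weight-$3$ Hodge structure. Here $\Cc$ acts on $R^2(f_b)_*\Z|_{\cuo}$ through $\Cc \subseteq \Aut(\Delta_h) = \Aut(\bLambda_h, W_h)$ and on $\btcu_b$ through the outer automorphisms (possible precisely because $b \in \Bb(\cu,\Delta)$), and the invariant lattice $\bLambda_h^\Cc$, with its residual $W = W_h^\Cc$-action and the restricted form, is the root lattice of $\Delta$, i.e. the character lattice of $G_{ad}(\Delta)$; choosing \emph{invariants of the root lattice} --- rather than coinvariants of the coweight lattice --- is what selects the adjoint form. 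Therefore $\mathcal{J}^2([\Xo_b/\Cc]) = \Prm_{\bLambda_h^\Cc}(\btcu_b/\cu)$, which by the Donagi--Gaitsgory cameral description of Hitchin fibres is the $G_{ad}(\Delta)$-Hitchin fibre over $b \in \Bb(\cu,\Delta)$ --- using that on $\bLambda_h^\Cc$-valued cohomology the $W_h$-cover $\btcu_b$ contributes only through its $W$-quotient, the $\Delta$-cameral cover. Finally I would assemble these fibrewise isomorphisms into an isomorphism of abelian schemes over $\Bbo$: both sides are the intermediate Jacobian fibrations of the same polarized integral variation of Hodge structure of weight $3$ built from the family of cameral covers with coefficients in $\bLambda_h^\Cc$, and the zero section of the orbifold intermediate Jacobian matches the abelianization of the Hitchin system because the $\ADE$-case isomorphism already did, $\Cc$-equivariantly.

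The step I expect to be the main obstacle is the integral equivariant bookkeeping underlying the last two steps --- showing that the lattice underlying $H^3([\Xo_b/\Cc];\Z)$ really is the adjoint character lattice of $\Delta$ and that the induced structures match those of the $G_{ad}(\Delta)$-Hitchin system. Three points need genuine care. First, pinning down $\im\!\big(H^3_\Cc(X_b;\Z) \to H^3(X_b;\C)^\Cc\big)$ against the group-cohomology corrections $H^{\ge 2}(\Cc, H^{\le 1}(X_b;\Z))$, e.g. controlling the possible differential $H^3(X_b;\Z)^\Cc \to H^2(\Cc;\Z)$ so that one lands on precisely the required (finite-index) sublattice. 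Second, the $S_3$-folding $D_4 \rightsquigarrow G_2$ --- and, less severely, $E_6 \rightsquigarrow F_4$ --- where $\Bb(\cu,\Delta)$ sits inside $\Bb(\cu,\Delta_h)$ as a proper subscheme that is not just the $\Cc$-fixed locus, so that identifying $\btcu_b$ with the $\Delta$-cameral cover rests on the $S_3$-invariant theory of the $D_4$ adjoint quotient. Third, making the family-level identification algebraic over all of $\Bbo$ and compatible with the Hitchin map and the natural polarizations: since the restriction of a principal polarization to an abelian subvariety need not remain principal, it is here that the equivariant --- as opposed to merely fixed-point --- structure must be used. The Hodge-theoretic purity that makes $J^2(X_b)$ and its $\Cc$-invariant part honest abelian varieties is exactly the Gorenstein Calabi--Yau hypothesis, which is why the theorem is stated for that class of threefolds.
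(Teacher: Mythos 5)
Your overall architecture coincides with the paper's: first reduce the orbifold intermediate Jacobian to the $\Cc$-invariant part of the ordinary one via the descent spectral sequence $H^p(\Cc,H^q(X_b,\Z))\Rightarrow H^{p+q}_{\Cc}(X_b,\Z)$, using that $\Cc$ acts trivially on $H^{\le 2}(X_b,\Z)$ because these groups are pulled back from $\cu$; then identify $\mathcal{J}^2_{\Cc}(\Xo)$ with the $G_{ad}(\Delta)$-Hitchin system. For the second half the paper simply invokes the companion result (Corollary 5 / Theorem 6 of \cite{Beck}), whereas you sketch the cameral--Prym identification directly; that is a legitimate alternative route (it is essentially how the cited result is proved), and your observation that taking \emph{invariants} of the root lattice, rather than coinvariants, is what selects the adjoint form is exactly the relevant point there.

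The genuine gap is the one you yourself flag: the comparison $H^3_{\Cc}(X_b,\Z)\cong H^3(X_b,\Z)^{\Cc}$ must hold \emph{integrally}, not merely up to torsion. Your argument that the extra spectral-sequence contributions are ``torsion and hence invisible in $H^3(\cdot,\C)$'' does not suffice for the theorem as stated: the lattice entering $\mathcal{J}^2([X_b/\Cc])$ is the image of $H^3_{\Cc}(X_b,\Z)$ in $H^3(X_b,\C)^{\Cc}$, which a priori is only the kernel of the differentials $d_r\colon E_r^{0,3}\to E_r^{r,4-r}$ (with targets subquotients of $H^2(\Cc,H^2(X_b,\Z))$ and $H^4(\Cc,H^0(X_b,\Z))$), hence possibly a \emph{proper} finite-index sublattice of $H^3(X_b,\Z)^{\Cc}$ modulo torsion. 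That would only give an isogeny, not an isomorphism, of abelian schemes over $\Bbo$. Supplying this integral statement is precisely the content of Proposition \ref{p:isoeq}, which the paper deduces from the Leray description $H^q(X_b,\Z)\cong H^q(\cu,\pi_*\Z)$ for $q\le 2$ and $H^3(X_b,\Z)\cong H^1(\cu,R^2\pi_*\Z)$, together with the triviality of the $\Cc$-action in low degrees; the paper stresses via Example \ref{ex:eqcoh} that the integral statement genuinely fails in closely related situations, so this step cannot be waved through. Your two remaining worries are addressed in the paper by Corollary \ref{cor:comp} (the $\Delta$-family is the restriction of the $\Delta_h$-family to $\Bb\cong\Bb_h^{\CcA}$, which handles the $\mathrm{G}_2$ and $\mathrm{F}_4$ foldings uniformly) and by globalizing the fibrewise spectral sequence to one of polarizable $\Z$-VMHS over $\Bbo$, which makes the family-level identification algebraic.
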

The basic idea for constructing $\X\to \Bb$ (without the $\Cc$-action) goes back to \cite{Sz1} and \cite{DDP}. 
However, we offer two main novelties. 
The first one is a consequent use of Lie-theoretic methods, namely Slodowy slices, which makes the link to Hitchin systems from the start. 
For example, we construct nowhere-vanishing sections $s_b\in H^0(X_b,K_{X_b})$ of the canonical class $K_{X_b}$ of $X_b$, $b\in \Bb$, in terms of the Kostant-Kirillov form. 
The second one is the construction of a $\Cc$-action on the family $\X \to \Bb$ and we prove that the sections $s_b$ are $\Cc$-invariant. 
Therefore the global quotient family $[\X/\Cc]\to \Bb$ is indeed a family of Calabi-Yau orbifolds. 
\\
Another important step in the proof of Theorem \ref{thm:thm1} is an isomorphism 
\begin{equation}\label{eq:isoMHS}
H^3_{\Cc}(X_b,\Z) \cong H^3(X_b,\Z)^{\Cc}, \quad b\in \Bb^\circ,
\end{equation}
between Deligne's $\Z$-MHS on the integral equivariant cohomology $H^3_{\Cc}(X_b,\Z)$ and the $\Z$-MHS on the $\Cc$-invariant part $H^3(X_b,\Z)^{\Cc}$ of the integral cohomology. 
Even though (\ref{eq:isoMHS}) always holds true over $\Q$, it is false in general over $\Z$ due to torsion. 
For example, it fails for the minimal resolution of the $\Delta_h$-singularity (Example \ref{ex:eqcoh}). 
Combined with the global isomorphism
\begin{gather}\label{eq:J2C}
\mathcal{J}^2_{\Cc}(\X^\circ) \cong \mathcal{M}_{\mathrm{H}}(\cu, G_{ad}(\Delta)), 
\end{gather}
over $\Bbo$ (Corollary 5 in \cite{Beck}), where $\mathcal{J}_{\Cc}^2(\X^\circ)\to \Bbo$ is the intermediate Jacobian fibration defined by $\Cc$-invariants in cohomology, we conclude Theorem \ref{thm:thm1}. 
In fact, the isomorphism (\ref{eq:J2C}) gave a hint that we have to use with Calabi-Yau orbifolds if $\Cc\neq 1$ instead of ordinary Calabi-Yau threefolds if $\Cc\neq 1$. 
By working with equivariant compactly supported cohomology instead of equivariant cohomology, we further obtain the Langlands dual version of Theorem \ref{thm:thm1} and hence cover all simple simply-connected complex Lie groups as well. 
\\
Besides the $\Cc$-action and construction of $\Cc$-invariant holomorphic volume forms, the Lie-theoretic construction gives a way to study the smooth locus of the family $\X\to \Bb$. 
As an application, we see in Corollary \ref{cor:comp} that if $\Delta_h$ is of type $\ADE$ and admits non-trivial Dynkin graph automorphisms $\Cc$ (see Remark \ref{r:dynkin} in Appendix \ref{SectFolding}), then the smooth locus of the family $\X\to \Bb(\cu, \Delta_h)$ is much larger than the smooth locus of $\Delta_h$-cameral curves. 
It follows that the family $\X\to \Bb(\cu, \Delta_h)$ does not only geometrically encode the smooth loci of 
\begin{equation*}
\begin{tikzcd}
\mathcal{M}_{\mathrm{H}}(\cu, G_{ad}(\Delta_h)) \ar[r] & \Bb(\cu, \Delta) & \ar[l] \mathcal{M}_{\mathrm{H}}(\cu, G_{sc}(\Delta_h))
\end{tikzcd}
\end{equation*}
but correspondingly for $G_{ad}(\Delta)$ and $G_{sc}(\Delta)$, under the natural inclusion $\Bb(\cu, \Delta)\subset \Bb(\cu, \Delta_h)$ for $\Delta=\Delta_h^\Cc$.

\subsection{Relation to other works}
Our work is intellectually indebted to \cite{Sz1}, \cite{DDP}, \cite{DDD} and \cite{Slo}.
We relate our approach to the first three works, for example we give explicit equations in the spirit of \cite{DDD} (also see Theorem \ref{thm:defX0}). 
For that reason, we give two Slodowy slices $S\subset \mathfrak{so}(5,\C) $ and $S_h \subset \mathfrak{sl}(4,\C) $  and show by a direct computation that they both realize the semi-universal deformation of the $\mathrm{B}_2$-singularity in Appendix \ref{a:ex}.
Moreover, we show that families of \cite{Sz1} provide simultaneous resolutions over a cone in the singular locus of $\pib:\X\to \Bb$ (Proposition \ref{p:simresol}). 
Graph automorphisms also appear in \cite{Sz1} but we use them differently. 
In particular, orbifold stacks and their cohomology appear in neither of these works. 
Finally, it is further crucial for our work \cite{Beck} that the `volume forms' $s_b\in H^0(X_b,K_{X_b})$ induce a period map on $\Bbo$.
This period map is a so-called abstract Seiberg-Witten differential which in turn determines the structure of an algebraic integrable system on $\mathcal{J}_\Cc^2(\X^\circ/\Bbo)\to \Bbo$ (cf. Section 4.4 in \cite{Beck}).





\subsection{Outline}
Since the key ingredient for the local building blocks of the Calabi-Yau orbifolds (\ref{intro:fam}) are Slodowy slices, we recall and prove important preparatory results on them in Section \ref{s:local}.
Section \ref{s:cy} begins with a recap on Hitchin bases and cameral curves and continues with the construction of Calabi-Yau orbifolds and related families of quasi-projective Calabi-Yau threefolds. 
In Section \ref{s:mhs} we study mixed Hodge structures on equivariant cohomology and prove Theorem \ref{thm:thm1}. 
Appendix \ref{SectFolding} contains our conventions on folding of Dynkin diagrams and Appendix \ref{a:ex} gives two examples of Slodowy slices that realize the semi-universal deformations of the same surface singularity. 

\subsection{Acknowledgements}
It is a pleasure to thank Katrin Wendland and Emanuel Scheidegger as well as Ron Donagi and Tony Pantev for discussions and questions on this paper's content. 
This work was supported by the DFG Graduiertenkolleg 1821 "Cohomological Methods in Geometry" 
and through the DFG Emmy-Noether grant on "Building blocks of physical theories from the geometry of quantization and BPS states", number AL 1407/2-1.

\section{Local preparations}\label{s:local}
\subsection{$\Delta$-singularities and Slodowy slices}
Let $\Delta$ be an irreducible Dynkin diagram.
Then there is a unique pair\footnote{The subindex of $\Delta_h$ stands for homogeneous.} $(\Delta_h,AS(\Delta))$ consisting of an $\ADE$-Dynkin diagram and the associated symmetry group $AS(\Delta)\subset \Aut(\Delta_h)$ such that $\Delta$ is obtained by folding, i.e. $\Delta=\Delta_h^{AS}$ (see Appendix \ref{SectFolding} for more details). 
A $\Delta$-singularity $(Y,H)$ (\cite{Slo}) consists of a (germ of a) surface singularity $Y=(Y,0)$ of type $\Delta_h$ and a subgroup $H\subset \Aut(Y)$ with the following properties: 
\begin{enumerate}[label=\roman*)]
\item 
$H\cong AS(\Delta)$; 
\item
the action of $H$ on $Y-\{0\}$ is free; 
\item 
the induced action on the dual resolution graph of the minimal resolution $\hat{Y}\to Y$ coincides with the $AS(\Delta)$-action on $\Delta_h$. 
\end{enumerate}
For $\Delta=\Delta_h$, these are just $\ADE$-surface singularities. 
Every $\Delta$-singularity $(Y,H)$ is quasi-homogeneous (cf. Table \ref{ADEsings} in Appendix \ref{SectFolding}), in particular, $Y$ carries a $\C^*$-action that commutes with the $H$-action. 
A $\C^*$-deformation of $(Y,H)$ is therefore a $\C^*\times H$-deformation $\mathcal{Y}\to B$ such that $H$ acts trivially on the base. 
Each $(Y,H)$ has a semi-universal $\C^*$-deformation, see Section $2$ in \cite{Slo}.
\\
The relation to the simple complex Lie algebra $\gfr=\gfr(\Delta)$ of type $\Delta$ is as follows: 
Let $x\in \gfr$ be a subregular nilpotent element and $(x,y,h)$ an $\mathfrak{sl}_2$-triple in $\gfr$ with semisimple $h$. 
Then the Slodowy slice through $x$ (associated to the triple $(x,y,h)$) is given by 
\begin{equation*}
S=S(\Delta):=x+\ker \mathrm{ad}(y).
\end{equation*}
It carries a non-trivial action by the group $\C^*\times AS(\Delta)$ (cf. Section 6 in \cite{Slo}). 
Here the $AS(\Delta)$-action is defined by the action (via adjunction) of the group\footnote{We often use $\Cc$ to emphasize how the $AS(\Delta)$-action is realized.}
\begin{equation}\label{eq:Cc}
\Cc= C(x,h)/C(x,h)^\circ \cong AS(\Delta)
\end{equation}
which is the group of connected components of $C(x,h)=\{g\in G_{ad}~|~g\cdot x=x, g\cdot h= h\}$. 
For a fixed Cartan subalgebra $\tfr\subset \gfr$, denote by $\chi:\gfr\to \tfr/W$ the adjoint quotient and 
\begin{equation*}
\sigma:=\chi_{|S}: S\to \tfr/W
\end{equation*}
its restriction to the Slodowy slice $S$. 
If we let $\C^*$ act on $\tfr/W$ with \emph{twice} the usual weight and $\Cc$ on $\tfr/W$ trivially, then $\sigma$ is $\C^*\times \Cc$-equivariant. 
Slodowy has shown that $\sigma: S \to \tfr/W$ with the $\C^*\times \Cc$-action is a $\C^*$-semi-universal deformation of the $\Delta$-singularity $(\sigma^{-1}(\bar{0}),x)$ (\cite{Slo}, Section 8.7). 
\\
Finally, we recall Grothendieck's simultaneous resolution of $\chi:\gfr\to \tfr/W$,
\begin{equation}\label{eq:Gsimresol}
\begin{tikzcd}
\tilde{\gfr}\cong G\times^B \mathfrak{b} \ar[r, "\psi"]  \ar[d, "\theta"'] & \gfr \ar[d, "\chi"] \\
\tfr \ar[r , "q"] & \tfr/W. 
\end{tikzcd}
\end{equation}
If $S=x+\ker \mathrm{ad}(y)$ is a Slodowy slice, then (\ref{eq:Gsimresol}) restricts to the simultaneous resolution $\psi:\tilde{S}\to S$ of $\sigma: S\to \tfr/W$ (see Chapter 5 in \cite{Slo}).
If $(x,y,h)$ is the $\mathfrak{sl}_2$-triple defining $S$, then the resulting square is a square of $\C^*$-spaces if $h\in \tfr$ (see Remark 1.53 in \cite{Beck-thesis} for details). 
Since any two Cartan subalgebras are conjugate to each other, we will assume $h\in \tfr$ from now on. 


\subsection{Relative symplectic form}

It is well-known (e.g. Chapter 1, \cite{ChrissGinzburg}) that the the Kostant-Kirillov form $\nu=\omega_{KK}$  restricts to a symplectic form on each adjoint orbit $O\subset \gfr$. 
Consequently, the relative differential form $\hat{\nu}\in \Omega^2_{\chi}(\gfr)$ induced by $\nu$ restricts to a relative symplectic form on the locus $\gfr^{reg}$ of regular elements in $\gfr$.
This holds true for the restriction $\hat{\nu}_{|S^{reg}}\in \Omega^2_{\sigma^{reg}}$ to $S^{reg}:=S\cap \gfr^{reg}$ as well (cf. \cite{GG}, Appendix 7).
Since $S-S^{reg}$ is of codimension at least $2$, $\hat{\nu}_{|S^{reg}}$ extends to a global section of $K_{\sigma}$ which we also denote by $\hat{\nu}\in \Gamma(S,K_{\sigma})$.  
\begin{prop}\label{p:C*equivariant}
Let $S \subset \gfr$ be a Slodowy slice and $\hat{\nu}\in \Gamma(S,K_{\sigma})$ be the previously constructed section. 
Then $\hat{\nu}$ is nowhere vanishing, in particular $K_\sigma$ is trivializable. 
Moreover, it is $\C^*$-equivariant, 
\begin{equation*}
\lambda^*\hat{\nu}=\lambda^2 \hat{\nu},\quad \lambda\in \C^*, 
\end{equation*}
as well as $\Cc$-invariant.
\end{prop}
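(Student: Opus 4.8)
The plan is to verify the three assertions—nowhere-vanishing, $\C^*$-equivariance, and $\Cc$-invariance—in that order, exploiting that each can be checked on the dense open $S^{reg}$ and then extended by Hartogs/normality since $S$ is smooth and $S-S^{reg}$ has codimension $\geq 2$. First I would recall the precise description of $\hat\nu$ on $S^{reg}$: for $s\in S^{reg}$ the fibre $\sigma^{-1}(\sigma(s))\cap S^{reg}$ is an open piece of a single regular adjoint orbit $O$ intersected with $S$, and $\hat\nu_s$ is the restriction of the Kostant--Kirillov symplectic form $\nu|_O$ to the tangent space of this intersection, which is a Lagrangian-complementary slice; more concretely, $\ker\ad(y)$ provides a linear complement to $[\,\gfr,x\,]=\operatorname{im}\ad(x)$, and one reads off that $\hat\nu_s$ is a top form on the two-dimensional fibre. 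To see it is nowhere vanishing I would argue that at every $s\in S^{reg}$ the relative form $\hat\nu$ is a \emph{symplectic} (hence non-degenerate, hence nonzero) form on the $2$-dimensional fibre—this is exactly the statement cited from \cite{GG}, Appendix 7, that $\hat\nu|_{S^{reg}}$ is relatively symplectic. Since $K_\sigma$ is a line bundle on the smooth surface-fibred space $S$ and $\hat\nu$ is a global section nonvanishing on the big open set $S^{reg}$, its zero divisor is supported in codimension $\geq 2$, hence empty; thus $\hat\nu$ trivializes $K_\sigma$.

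For the $\C^*$-equivariance, the key is to track weights. The grading on $\gfr$ coming from $\ad(h)$ together with the contracting $\C^*$-action on $S$ (translating $x$ and scaling $\ker\ad(y)$ by the $\ad(h)$-weights) is, by construction, the one for which $\sigma$ is equivariant with $\C^*$ acting on $\tfr/W$ with twice the standard weight. The Kostant--Kirillov form $\nu$ on $\gfr$, viewed via the identification $\gfr\cong\gfr^*$ through the Killing form, is linear in the base point, so $\nu$ itself scales with weight... and restricting to the Slodowy slice where the coordinate along $x$ has a fixed weight shift, a direct weight count on the two fibre directions (whose $\ad(h)$-weights sum, together with the base-point shift, to the relevant total) yields $\lambda^*\hat\nu=\lambda^2\hat\nu$. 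I would phrase this cleanly by noting that the statement is already established in the literature for the relatively symplectic form on Grothendieck--Springer-type slices (e.g. via the computation in \cite{Beck-thesis}, Remark 1.53, where the square \eqref{eq:Gsimresol} is shown to be a square of $\C^*$-spaces and $h\in\tfr$); the weight $2$ is forced by comparison with the weight on $\tfr/W$ and the fact that $\hat\nu$ is a relative volume form for a family of surfaces whose semi-universal base $\tfr/W$ carries the doubled action.

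Finally, for $\Cc$-invariance: the group $\Cc=C(x,h)/C(x,h)^\circ$ acts on $S$ through the adjoint action of $C(x,h)\subset G_{ad}$, which preserves $x$ and $h$ hence preserves $\ker\ad(y)$ and the slice $S$, and commutes with $\chi$ so acts fibrewise over $\tfr/W$. Since $\nu=\omega_{KK}$ is $G_{ad}$-invariant (the coadjoint action preserves the Kostant--Kirillov form), its restriction $\hat\nu$ to each fibre is invariant under the induced $C(x,h)$-action, hence under $\Cc$; again it suffices to check this on $S^{reg}$, and the invariance extends uniquely. The main obstacle I anticipate is pinning down the \emph{exact} weight in the $\C^*$-equivariance claim rather than merely that $\hat\nu$ is a weight vector: one must correctly combine the weight shift from the affine translation by $x$ (the $\ad(h)$-eigenvalue $+2$ of $x$) with the eigenvalues of the two fibre coordinates in $\ker\ad(y)$ and the normalization on $\tfr/W$, and it is easy to be off by a factor; I would handle this by testing on a rank-one reduction or on the explicit $\mathrm{B}_2$ example of Appendix \ref{a:ex}, where all eigenvalues are tabulated, to confirm the clean answer $\lambda^*\hat\nu = \lambda^2\hat\nu$.
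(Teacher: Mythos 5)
Your overall architecture is sound, and two of the three claims are handled correctly. For the nowhere-vanishing statement you take a genuinely different and more elementary route than the paper: you stay on $S$ itself, use the Gan--Ginzburg fact that $\hat{\nu}$ is fibrewise symplectic on $S^{reg}$, and then kill the zero locus by the observation that the zero scheme of a nonzero section of an invertible sheaf on the irreducible smooth variety $S$ is either empty or pure codimension one, while it is contained in $S-S^{reg}$ which has codimension $\geq 2$. This works, provided you justify that $K_\sigma$ is actually invertible (immediate here: $S$ and $\tfr/W$ are smooth and $\sigma$ is flat, so $K_\sigma\cong \omega_S\otimes \sigma^*\omega_{\tfr/W}^{-1}$). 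The paper instead pulls everything back to the simultaneous resolution $\tilde{S}$, identifies $\hat{\omega}$ with the form obtained from Yamada's symplectic-reduction description of (\ref{eq:Gsimresol}), and proves $\Phi(\psi^*\hat{\nu})=\hat{\omega}$ after a codimension estimate on $\tilde{S}$. Your shortcut proves the Proposition as stated, but be aware that the paper's detour also establishes the comparison (\ref{eq:nuomega}), which is used again later (Lemma \ref{l:bhatnu} ii)), so it is not wasted effort there. The $\Cc$-invariance via $\Ad$-invariance of the Kostant--Kirillov form is exactly the paper's argument.

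The genuine gap is the $\C^*$-equivariance with the precise weight $2$. You explicitly leave the weight undetermined ("it is easy to be off by a factor") and propose to confirm it by testing an example; that is a sanity check, not a proof, and the weight is the entire quantitative content of the claim. Moreover the fallback you suggest does not close the gap: Remark 1.53 of \cite{Beck-thesis} only says that (\ref{eq:Gsimresol}) is a square of $\C^*$-spaces, and the paper's actual source for equivariance is Yamada's statement about $\hat{\omega}$ on $\tilde{S}$ --- which you cannot invoke, since your streamlined argument never establishes $\psi^*\hat{\nu}=\hat{\omega}$. The fix is a short direct computation that is cleaner than the weight bookkeeping you fear: the Kazhdan $\C^*$-action on $S$ is the restriction of the global action $\lambda\cdot \xi=\lambda^2\,\Ad(\rho(\lambda)^{-1})\xi$ on $\gfr$, where $\rho$ is the cocharacter with differential $h$. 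Tangent vectors to an orbit transform as $[a,\xi]\mapsto [\Ad(\rho(\lambda)^{-1})a,\ \lambda^2\Ad(\rho(\lambda)^{-1})\xi]$, so
\begin{equation*}
\nu_{\lambda\cdot\xi}\bigl(\lambda_*[a,\xi],\lambda_*[b,\xi]\bigr)=\kappa\bigl(\lambda^2\Ad(\rho(\lambda)^{-1})\xi,[\Ad(\rho(\lambda)^{-1})a,\Ad(\rho(\lambda)^{-1})b]\bigr)=\lambda^2\,\kappa(\xi,[a,b]),
\end{equation*}
by $\Ad$-invariance of the Killing form and linearity of $\nu$ in the base point; there is no separate "shift from the affine translation by $x$" to track. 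This gives $\lambda^*\hat{\nu}=\lambda^2\hat{\nu}$ on $S^{reg}$, and the identity extends to $S$ since both sides are sections of the same sheaf agreeing on a dense open set. With that inserted, your proof is complete.
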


\begin{proof}
The proof proceeds in two steps. 
In the first step, we use Yamada's realization (\cite{Yamada}) of $\theta: \tilde{\gfr}\to \tfr$ as a relative symplectic reduction to relate the corresponding relative symplectic form $\hat{\omega} \in \Omega^2_{\theta}(\tilde{\gfr})$ to $\hat{\nu}\in \Omega^2_\chi(\gfr)$ over the locus $\gfr^{reg}\subset \gfr$ of regular elements. 
In the second step, we restrict to $S^{reg}$ and extend over the non-regular locus. 
\\
For the first step, fix a maximal torus $T$ and a Borel subgroup $B\supset T$ in the adjoint group $G=G_{ad}$ and denote by $N\subset B$ the nilradical of $B$. 
Then the symplectic manifold $(M:=T^*(G/N),\omega_c)$, with the canonical symplectic structure $\omega_c$, carries a natural Hamiltonian $G$- and $T$-action induced by left and right multiplication on $G/N$ respectively. 
The corresponding moment maps descend to $\hat{\mu}_G:M/T \to \gfr^*$ and $\hat{\mu}_T:M/T \to \tfr^*$. 
Yamada (\cite{Yamada}) identifies the square (\ref{eq:Gsimresol}) with the diagram 
\begin{equation*}
\begin{tikzcd}
M/T \ar[r, "\hat{\mu}_G"] \ar[d, "\hat{\mu}_T"']   & \gfr \ar[d, "\chi"] \\
\tfr \ar[r]  & \tfr/W
\end{tikzcd}
\end{equation*}
after $\gfr^*\cong \gfr$ and $\tfr^*\cong \tfr$ via the Killing form (which we will do from now on). 
Under this identification, the symplectic form $\omega_c$ descends and induces the relative symplectic form $\hat{\omega}\in \Omega^2_{\theta}(\tilde{\gfr})$. 
Moreover, for each $t\in \tfr$ the induced $G$-action on $(\theta^{-1}(t),\hat{\omega}_t)$ is Hamiltonian and the restriction $\psi_t:\theta^{-1}(t) \to \gfr$ of $\psi: \tilde{\gfr}\to \gfr$ is the corresponding moment map (Theorem 2.5 in \cite{Yamada}). 
Now let $\psi^{reg}:\tilde{\gfr}^{reg}\to \gfr^{reg}$ be the restriction of $\psi: \tilde{\gfr} \to \gfr$ to the regular locus. 
Note that 
\begin{equation}
\psi^{reg}_t: \tilde{\gfr}^{reg}\cap \theta^{-1}(t) \to \gfr^{reg}\cap \chi^{-1}(\bar{t}),\quad q(t)=\bar{t},
\end{equation}
is an isomorphism for each $t\in \tfr$.
In fact, it is not difficult to see that $\psi^{reg}_t$ is a symplectomorphism if the left-hand side is endowed with the symplectic form $\hat{\omega}_{t}$ and the right-hand side with the Kostant-Kirillov form $\nu$. 
Since $\psi^{reg}$ is smooth, 
$
(\psi^{reg})^*\Omega_\chi^2\cong \Omega^2_\theta
$ 
naturally.
Under this isomorphism, we have
\begin{equation*}
\psi^*\hat{\nu}=\hat{\omega}\in \Gamma(\tilde{\gfr}^{reg}, \Omega^2_\theta)
\end{equation*}
because $\psi^{reg}_t$ is a symplectomorphism for each $t\in \tfr$. 
\\
For the second step, we note that the restriction of $\hat{\omega}$ to $\tilde{S}\subset \tilde{\gfr}$ is again a relative symplectic form (Theorem 4.5 in \cite{Yamada}), again denoted by $\hat{\omega}\in \Omega^2_{\tilde{\sigma}}(\tilde{S})$. 
Again we have a natural isomorphism
\begin{equation*}
\Phi^{reg}:\psi^*K_{\sigma^{reg}}=\psi^*\Omega^2_{\sigma^{reg}}\to \Omega^2_{\tilde{\sigma}^{reg}}
\end{equation*} 
over $\tilde{S}^{reg}$ with $\Phi^{reg}(\psi^*\hat{\nu})=\hat{\omega}$.
We claim that $\Phi^{reg}$ extends to an isomorphism $\Phi: \psi^*K_{\sigma} \to K_{\tilde{\sigma}}$ satisfying 
\begin{equation}\label{eq:nuomega}
\Phi(\psi^*\hat{\nu})=\hat{\omega}.
\end{equation}
Since $K_\sigma$ and $K_{\tilde{\sigma}}$ are reflexive, it is sufficient to prove $\mathrm{codim}_{\tilde{S}}\tilde{T}\geq 2$ for $\tilde{T}:=\tilde{S}-\tilde{S}^{reg}$. 
The components of $\tilde{T}$ of highest dimension lie over the hypersurfaces $\tfr_{\alpha}-\cap_{\beta\neq \alpha} \tfr_\beta\cap \tfr_\alpha$. 
If $t$ lies in such a hypersurface, then $\tilde{\sigma}^{-1}(\bar{t})\cap \tilde{T}$ consists of the exceptional divisor of $\psi_t:\tilde{S}_t\to S_{\bar{t}}$ for $\bar{t}=q(t)$. 
Hence these components of $\tilde{T}$ have dimension $(r-1)+1=r$ which is of codimension $2$ in $\tilde{S}$.
Equation (\ref{eq:nuomega}) implies that $\hat{\nu}$ is nowhere vanishing because $\hat{\omega}$ is and $\psi$ is surjective. 
\\
Finally, the $\C^*$-equivariance follows from that of $\hat{\omega}$ (Corollary 4.6 in \cite{Yamada}) whereas the $\Cc$-invariance is a consequence of the $\Aut(\gfr)$-invariance of the Kostant-Kirillov form $\nu=\omega_{KK}$.
\end{proof}
The morphism $\tilde{\sigma}: \tilde{S}\to \tfr$ is topologically trivial (cf. \cite{Slo2}). 
Hence parallel transport defines canonical isomorphisms
$P_t: H^2(\tilde{S}_t,\C) \to H^2(\tilde{S}_0, \C)$, $ t\in \tfr$. 

\begin{cor}
The period map 
\begin{equation}\label{eq:H2per}
P_{\tilde{S}}:\tfr \to H^2(\tilde{S}_0,\C)^\Cc,\quad t\mapsto P_t([\hat{\omega}_t]),
\end{equation}
is a $W$-equivariant isomorphism.
\end{cor}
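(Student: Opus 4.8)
The plan is to recognise $P_{\tilde{S}}$ as a linear, $W$-equivariant map between two copies of the reflection representation of $W=W(\Delta)$ and then to conclude by Schur's lemma; the only input that is not formal is the non-vanishing of $P_{\tilde{S}}$, which is classical.

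First I would check that the map is well-defined with the stated target. Since $\tilde{\sigma}$ is topologically trivial, $R^2\tilde{\sigma}_*\C$ is a constant sheaf on the contractible base $\tfr$, so the transports $P_t$ carry no monodromy and are canonical; moreover $\Cc$ acts trivially on $\tfr$ and preserves every fibre $\tilde{S}_t$ (Sections 6--8 of \cite{Slo}), and $\hat{\omega}$ is $\Cc$-invariant by Proposition \ref{p:C*equivariant}, so $[\hat{\omega}_t]\in H^2(\tilde{S}_t,\C)^\Cc$ and $P_{\tilde{S}}(t)\in H^2(\tilde{S}_0,\C)^\Cc$. Next I would identify the two sides. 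The source $\tfr$ is by construction the reflection representation of $W(\Delta)$. The fibre $\tilde{S}_0$ is the minimal resolution of the $\Delta_h$-singularity $\sigma^{-1}(\bar{0})$, so $H^2(\tilde{S}_0,\C)$ is freely spanned by the classes dual to the exceptional $\mathbb{P}^1$'s and is the reflection representation of $W(\Delta_h)$, on which $\Cc=AS(\Delta)$ acts through the graph automorphisms of $\Delta_h$; taking invariants gives the reflection representation of $W(\Delta_h)^\Cc=W(\Delta)$, of dimension $r=\dim\tfr$ (Appendix \ref{SectFolding}).

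For linearity I would use the $\C^*$-equivariance of Proposition \ref{p:C*equivariant}: $\hat{\omega}$ has weight $2$, the $\C^*$-action on $\tfr$ has weight $2$ as well (so that $q\colon\tfr\to\tfr/W$ is equivariant for the doubled weights on $\tfr/W$), and $\C^*$ acts trivially on $H^2(\tilde{S}_0,\C)$ because, being connected, it preserves each exceptional $\mathbb{P}^1$. Transporting along the closure of a $\C^*$-orbit then yields $P_{\tilde{S}}(\lambda t)=\lambda\, P_{\tilde{S}}(t)$ for real $\lambda>0$; since $[\hat{\omega}_t]$ is an algebraic section of the relative de Rham bundle, $P_{\tilde{S}}$ is holomorphic, and a holomorphic map homogeneous of degree one is linear. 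For $W$-equivariance I would use that $W$ acts on $\tilde{S}^{\mathrm{rs}}$ over the deck action on $\tfr$ (the $W$-torsor structure of Grothendieck's simultaneous resolution over $\tfr^{\mathrm{reg}}$), that it fixes $\tilde{S}_0$ and acts there through the inclusion $W(\Delta)=W(\Delta_h)^\Cc\subset W(\Delta_h)$ as part of the reflection representation of $W(\Delta_h)$, and that $\hat{\omega}$ is invariant under it because $\psi^{reg}_t$ is a symplectomorphism onto $(\chi^{-1}(\bar{t})\cap S^{reg},\nu)$ for every $t$ over $\bar{t}$ (compare the $\Aut(\gfr)$-invariance of $\nu$ used in Proposition \ref{p:C*equivariant}). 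Since this $W$-action intertwines the flat connection on the constant sheaf $R^2\tilde{\sigma}_*\C$, one gets $P_{\tilde{S}}(wt)=w\cdot P_{\tilde{S}}(t)$.

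Because $\Delta$ is irreducible, the reflection representation of $W(\Delta)$ is irreducible, so by Schur's lemma the linear $W$-map $P_{\tilde{S}}$ is either zero or an isomorphism; it remains to exclude zero. Here I would argue that for regular $t$ the isomorphism $\psi_t$ identifies $(\tilde{S}_t,\hat{\omega}_t)$ with the smooth Milnor fibre $(S_{\bar{t}},\nu)$ sitting inside the regular semisimple orbit $\chi^{-1}(\bar{t})$, and the de Rham class of the Kostant--Kirillov form on $S_{\bar{t}}$ is non-zero --- this is exactly the non-degeneracy that underlies the Brieskorn--Slodowy simultaneous-resolution theorem (\cite{Slo}). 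Alternatively, restricting to a generic point of a single wall $\tfr_\alpha$ reduces this to the $\mathrm{A}_1$-case, where $\hat{\omega}_t$ contracts the exceptional curve $E_\alpha$ (so $\int_{E_\alpha}\hat{\omega}_t=\int_{E_\alpha}\psi_t^*\nu=0$) but has a non-zero period on a transverse vanishing cycle. I expect this non-vanishing --- equivalently, that $\hat{\omega}$ is not fibrewise exact --- to be the only real obstacle; granting it, the corollary follows from Schur's lemma and the dimension count $\dim\tfr=r=\dim H^2(\tilde{S}_0,\C)^\Cc$.
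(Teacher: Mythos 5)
The paper does not actually prove this corollary: it is disposed of by citation, to \cite{Yamada} for $\Cc=1$ and to Corollary 1.98 of \cite{Beck-thesis} for the folded case. Your argument is therefore a genuinely different, self-contained route: linearity of $P_{\tilde{S}}$ from the $\C^*$-equivariance $\lambda^*\hat{\omega}=\lambda^2\hat{\omega}$ of Proposition \ref{p:C*equivariant} (note that with the conventions here $\C^*$ acts on $\tfr$ by $t\mapsto\lambda^2 t$, so your homogeneity $P_{\tilde{S}}(\mu t)=\mu P_{\tilde{S}}(t)$ holds for all $\mu\in\C^*$ since every $\mu$ is a square), $W$-equivariance from the $W$-torsor structure of the simultaneous resolution over $\tfr^{\mathrm{reg}}$ together with constancy of $R^2\tilde{\sigma}_*\C$, and then injectivity from irreducibility of the reflection representation of $W(\Delta)$ plus the dimension count $\dim\tfr=r=\dim H^2(\tilde{S}_0,\C)^{\Cc}$. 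This correctly isolates the single non-formal input, namely $P_{\tilde{S}}\neq 0$, and has the advantage over the paper's citation of making visible exactly which classical fact is being used. Note also that for the injectivity step you only need \emph{some} $W$-action on the target intertwined by $P_{\tilde{S}}$, not its identification with the reflection representation, so that part of your argument is robust.

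Two caveats. First, your ``alternative'' non-vanishing argument at a generic wall point is not right as written: $\int_{E_\alpha}\hat{\omega}_t=0$ holds for trivial type reasons (a holomorphic $2$-form restricts to zero on any holomorphic curve), so it carries no information, and a ``period on a transverse vanishing cycle'' must mean an integral over a $2$-cycle. The correct elementary version is your first argument made explicit: restrict to the $\mathfrak{sl}_2$-subalgebra attached to a root $\alpha$ and compute the period of the Kostant--Kirillov form over the vanishing $2$-sphere in the nearby smooth fibre (for $x^2+y^2+z^2=s$ one gets a nonzero multiple of $\sqrt{s}$, i.e.\ of $\alpha(t)$); this is the classical Brieskorn computation and should be stated as such rather than gestured at. Second, the corollary asserts equivariance for a \emph{specific} $W$-action on $H^2(\tilde{S}_0,\C)^{\Cc}$, the reflection representation on the ($\Cc$-invariants of the) span of the exceptional classes; identifying the specialization action you construct on $H^2(\tilde{S}_0,\C)$ with that reflection action is precisely Brieskorn's theorem (Chapter 4 of \cite{Slo}) and is not formal, so to prove the statement as literally formulated you still need that input, not merely the non-vanishing.
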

\begin{proof}
The case of $\ADE$-Dynkin diagrams, i.e. $\Cc=1$, goes back to \cite{Yamada}. 
For $\Cc\neq 1$, it has been checked in \cite{Beck-thesis}, Corollary 1.98. 
\end{proof}

\subsection{Derivative of the adjoint quotient}\label{SloSectRemOnDerivatives}

Let $b:U\to \tfr/W$ be a morphism, where $U\subset \C$ is a Zariski-open 
subset. 
Then we define $\tilde{U}_b$ and $Y_b$ by the fiber products
\begin{equation}\label{LocalCY3Cameral}
\begin{tikzcd}
\tilde{U}_b \arrow[r] \arrow[d, "p_b"'] & \tfr \arrow[d, "q"] \\
U \arrow[r, "b"] & \tfr/W  \\ 
Y_b \arrow[r] \arrow[u, "\pi_b"] & S. \arrow[u, "\sigma"']
\end{tikzcd}
\end{equation}
In Section \ref{s:cy} it will become clear that $\tilde{U}_b$ is a local model for a cameral curve and $Y_b$ is a local model for the quasi-projective Calabi-Yau threefolds that we construct in loc. cit. 
Both $\tilde{U}_b$ and $Y_b$ are non-singular if $b$ is transversal to $q$ and $\sigma$. 
This condition in particular implies that the rank of $dq$ may not be less than $r-1$ for $r=\mathrm{rk}(\gfr)$. 
\begin{ex}\label{ExSL2}
We consider the simplest example, $\gfr=\mathfrak{sl}_2(\C)$. 
Of course, a Slodowy slice $S$ has to be all of $\gfr$, $\tfr=\C\cong \tfr/W$ and 
\begin{gather*}
q:\C\to \C, \quad z\mapsto z^2,\\
\sigma: S\to \C, \quad A\mapsto \det(A). 
\end{gather*}
A morphism $b:U\to \tfr/W$ is transversal to $q$ iff it has only zeros of multiplicity $1$. 
If this is the case $\tilde{U}_b=\{ (x,y)\in U\times \C~|~y^2-b(x)=0\}$ is non-singular and a branched double covering of $U$. 
It branches precisely over the zeros of $b$. \\
Identifying $\mathfrak{sl}_2(\C)\cong \C^3$, we write $\sigma(u,v,w)=-u^2-vw$. 
Again using that $b$ has simple zeros only, we see that 
$Y_b=\{ ((u,v,w),x)\in \C^3\times U~|~-u^2-vw-b(x)=0\}$
is non-singular. 
Note however, that if $b(x)=0$, then $\pi^{-1}_b(x)$ is an $\mathrm{A}_1$-singularity. 
\end{ex}



\begin{lem}\label{RKdq}
Let $x=h$ be semisimple and $t\in \tfr$ with $\chi(h)=q(t)$. 
Then $\mathrm{im}(dq_t)\subset \mathrm{im}(d\chi_h)$ and
\begin{equation*}
\mathrm{rk}(dq_t)=\dim \bigcap_{\alpha \in R, \alpha(t)=0} \ker \alpha=\dim C(Z_\gfr(h))
\end{equation*}
where $R$ are the roots corresponding to $\tfr\subset \gfr$. 
\end{lem}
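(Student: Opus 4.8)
The plan is to compute $dq_t$ explicitly using the standard description of the adjoint quotient and its behaviour at semisimple elements, and then to match it with the image of $d\chi_h$ via Grothendieck's simultaneous resolution. First I would recall that $q:\tfr\to\tfr/W$ can be described, via Chevalley's theorem, by a basis of $W$-invariant polynomials $p_1,\dots,p_r$, so that $dq_t$ has rank equal to $r$ minus the dimension of the common kernel of the differentials $dp_i$ at $t$. The cotangent space to $\tfr/W$ at $q(t)$ pulls back, under $q$, to the $W_t$-invariants in $\tfr^*$, where $W_t=\mathrm{Stab}_W(t)$ is generated by the reflections $s_\alpha$ with $\alpha(t)=0$ (this is a theorem of Steinberg). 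Hence $\mathrm{im}(dq_t^{\,*})=(\tfr^*)^{W_t}$, and dualizing, $\ker(dq_t)=\bigl((\tfr^*)^{W_t}\bigr)^{\perp}=\mathrm{span}\{\alpha^\vee : \alpha(t)=0\}$, so that $\mathrm{im}(dq_t)\cong\tfr/\mathrm{span}\{\alpha^\vee:\alpha(t)=0\}$, and its rank is $\dim\bigcap_{\alpha(t)=0}\ker\alpha$. This already gives the first displayed equality.

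For the identification with $C(Z_\gfr(h))$, I would use that $Z_\gfr(h)$ is the reductive subalgebra $\tfr\oplus\bigoplus_{\alpha(t)=0,\ \text{appropriately}}\gfr_\alpha$ — more precisely, when $h$ is conjugated into $\tfr$ to $t$, $Z_\gfr(h)$ has root system $R_t=\{\alpha\in R:\alpha(t)=0\}$ with Cartan $\tfr$. Its centre $C(Z_\gfr(h))$ is then exactly $\{v\in\tfr:\alpha(v)=0\ \forall\alpha\in R_t\}=\bigcap_{\alpha(t)=0}\ker\alpha$, which matches the middle term; one should note $\dim C(Z_\gfr(h))$ is conjugation-invariant so the choice of $t$ in the $W$-orbit doesn't matter.

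For the inclusion $\mathrm{im}(dq_t)\subset\mathrm{im}(d\chi_h)$, the cleanest route is the commuting square \eqref{eq:Gsimresol}: since $h$ is semisimple and regular in its centralizer, one can lift $t$ to a point $\tilde h\in\tilde\gfr$ with $\psi(\tilde h)=h$, $\theta(\tilde h)=t$, and then $d\chi_h\circ d\psi_{\tilde h}=dq_t\circ d\theta_{\tilde h}$; it remains to check that $d\theta_{\tilde h}$ is surjective at such a point, which holds because over the semisimple locus $\theta$ is a submersion (the fibre $\theta^{-1}(t)$ being a partial flag bundle, and $\psi$ being an isomorphism onto $\chi^{-1}(q(t))$ over the regular-semisimple locus, with the needed transversality persisting at merely semisimple $h$). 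Then $\mathrm{im}(dq_t)=\mathrm{im}(dq_t\circ d\theta_{\tilde h})=\mathrm{im}(d\chi_h\circ d\psi_{\tilde h})\subset\mathrm{im}(d\chi_h)$.

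The main obstacle I anticipate is the surjectivity of $d\theta$ at a non-regular semisimple point $\tilde h$: one must be careful that the fibre of $\psi$ over $h$ may be positive-dimensional (a flag variety of $Z_G(h)$), so the naive dimension count does not immediately give what is wanted, and one genuinely needs the local structure of $\theta$ — e.g. the fact that $\theta$ is smooth because $\tilde\gfr=G\times^B\bfr\to\tfr$ is a fibration with smooth fibres and the projection $\bfr\to\tfr$ is linear hence smooth. Once that local smoothness of $\theta$ is in hand, the rest is linear algebra with root systems and the standard invariant-theory input (Chevalley, Steinberg), which I would cite rather than reprove.
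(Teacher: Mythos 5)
Your proof is correct, and for two of the three claims it follows the paper's route exactly: the rank formula is the Steinberg statement that the differentials of the basic invariants at $t$ span $(\tfr^*)^{W_t}$ (the paper simply cites \cite{Steinberg} for this), and the identification of $\bigcap_{\alpha(t)=0}\ker\alpha$ with $C(Z_\gfr(h))$ is the same root-space-decomposition argument. Where you diverge is the inclusion $\mathrm{im}(dq_t)\subset\mathrm{im}(d\chi_h)$: you route it through Grothendieck's simultaneous resolution, which forces you to produce a lift $\tilde h$ with $\psi(\tilde h)=h$, $\theta(\tilde h)=t$, and to verify that $d\theta_{\tilde h}$ is surjective. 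That verification does go through ($\tilde\gfr\to G/B$ is a vector bundle with fibre $\bfr$ and $\theta$ is fibrewise the linear projection $\bfr\to\tfr$, so $\theta$ is a submersion everywhere), but the detour is unnecessary: after conjugating so that $h\in\tfr$ and $t=h$ (legitimate because $\mathrm{im}(d\chi_{gh})=\mathrm{im}(d\chi_h)$ and $\mathrm{im}(dq_{wt})=\mathrm{im}(dq_t)$ by $\mathrm{Ad}$- resp.\ $W$-invariance), one has $\chi\circ\iota=q$ for the inclusion $\iota:\tfr\hookrightarrow\gfr$, whence $dq_t=d\chi_h\circ d\iota_t$ and the inclusion of images is immediate. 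This is the paper's one-line argument, and it dissolves the "main obstacle" you flag; your version buys nothing extra here, though it is a reasonable warm-up for the way $\tilde\gfr$ is used elsewhere in Section \ref{s:local}.
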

\begin{proof}
Let $\tfr(h)\subset \gfr$ be a Cartan subalgebra that contains $h$. 
Since $\tfr(h)$ is conjugate to $\tfr$ we may assume that $h\in \tfr$ and in fact even $t=h$ by the Ad-invariance of $\chi$ and $q$. 
The first claim is obvious because $\chi_{|\tfr}=q$. \\ 
For the second claim we may assume $t=h$ as before. 
Then the first equality is proven in \cite{Steinberg}. 
For the second equality we claim 
\begin{align}\label{CenterOfCentralizer}
\bigcap_{\alpha \in R, \alpha(t)=0} \ker \alpha=C(Z_\gfr(h))
\end{align}
which can be seen as follows: 
Let $\gfr=\tfr\oplus \bigoplus_{\alpha\in R}\gfr_\alpha$ be the root space decomposition with respect to $\tfr$. 
Then we get 
\[
Z_\gfr(h)=\tfr\oplus \bigoplus_{\alpha \in R, \alpha(h)=0} \gfr_\alpha.  
\]
Since $[\tfr,\gfr_{\alpha}]\neq \{0\}$  and $[h,\gfr_\alpha]=0$ iff $\alpha(h)=0$, the equality (\ref{CenterOfCentralizer}) follows. 
\end{proof}

\begin{lem}
If $x=h$ is semisimple and $t\in \tfr$ with $q(t)=\chi(h)$, then $\mathrm{im}(dq_t)=\mathrm{im}(d\chi_h)$. 
\end{lem}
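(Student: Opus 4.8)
The plan is to leverage the previous lemma together with a dimension count coming from the two descriptions of the relevant tangent spaces. By Lemma \ref{RKdq} we already know that $\mathrm{im}(dq_t)\subseteq \mathrm{im}(d\chi_h)$, so it suffices to show that these two subspaces of $T_{\chi(h)}(\tfr/W)$ have the same dimension. We have the explicit value $\mathrm{rk}(dq_t)=\dim C(Z_\gfr(h))$ from Lemma \ref{RKdq}, so the task reduces to computing $\mathrm{rk}(d\chi_h)$ and checking it equals $\dim C(Z_\gfr(h))$.

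First I would compute $\mathrm{rk}(d\chi_h)$ directly. Since $\chi:\gfr\to\tfr/W$ is the adjoint quotient and $h$ is semisimple, the fiber $\chi^{-1}(\chi(h))$ through $h$ is the (closure of the) regular part of the adjoint orbit data; more usefully, $d\chi_h$ can be analyzed via the $G$-equivariance of $\chi$. The kernel of $d\chi_h$ contains the tangent space to the adjoint orbit $[\gfr,h]=\mathrm{im}(\mathrm{ad}(h))$, and since $h$ is semisimple we have the splitting $\gfr=Z_\gfr(h)\oplus \mathrm{im}(\mathrm{ad}(h))$ with $Z_\gfr(h)$ the centralizer. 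Restricting $\chi$ to the affine slice $h+\mathrm{im}(\mathrm{ad}(h))$ is a submersion onto a neighborhood of $\chi(h)$ only transversally to the orbit directions; the point is that $\chi$ restricted to $Z_\gfr(h)$ captures all of the image of $d\chi_h$, and on $Z_\gfr(h)$ the adjoint quotient restricts (after choosing a Cartan $\tfr\subset Z_\gfr(h)$) to the adjoint quotient of the reductive Lie algebra $Z_\gfr(h)$, whose differential at the central element $h$ has rank equal to $\dim Z(Z_\gfr(h))=\dim C(Z_\gfr(h))$ — because $h$ lies in the center of $Z_\gfr(h)$, so it is a "central" point of that smaller adjoint quotient and the tangent map there is exactly projection onto the central directions. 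This matches the value of $\mathrm{rk}(dq_t)$ from Lemma \ref{RKdq}.

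The cleanest way to organize the argument: reduce as in the proof of Lemma \ref{RKdq} to $h=t\in\tfr$ using $\mathrm{Ad}$-invariance; use the inclusion already established; then argue that any tangent vector in $\mathrm{im}(d\chi_h)$ is represented by a curve in $h+Z_\gfr(h)$ (since orbit directions die under $\chi$), reducing to the reductive algebra $\mathfrak{l}:=Z_\gfr(h)$; invoke that for the reductive $\mathfrak{l}$ with center $\mathfrak{z}=C(\mathfrak{l})$ and $h\in\mathfrak{z}$, the adjoint quotient $\chi_\mathfrak{l}$ near $h$ looks like $\mathfrak{z}\oplus\mathfrak{l}^{ss}\to \mathfrak{z}\times(\mathfrak{t}^{ss}/W_\mathfrak{l})$ with the second factor having differential zero at the semisimple-regular-irrelevant point $0$... — here one must be slightly careful, since $h$ restricted to the semisimple part $\mathfrak{l}^{ss}$ is $0$ and $d\chi_{\mathfrak{l}^{ss}}$ at $0$ is genuinely zero (every nonzero tangent direction is a nilpotent perturbation, killed by $\chi$). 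Hence $\mathrm{rk}(d\chi_h)=\dim\mathfrak{z}=\dim C(Z_\gfr(h))=\mathrm{rk}(dq_t)$, which together with the inclusion forces equality.

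I expect the main obstacle to be the identification $\mathrm{rk}(d\chi_h)=\dim C(Z_\gfr(h))$ cleanly — specifically, justifying rigorously that $d\chi_h$ vanishes on all directions transverse to the orbit inside the semisimple part of the centralizer, i.e. that nilpotent perturbations of $h$ within $Z_\gfr(h)^{ss}$ do not move $\chi(h)$ to first order. This is essentially the statement that $\chi$ is constant along $h+\mathcal{N}(Z_\gfr(h))$ where $\mathcal{N}$ denotes the nilpotent cone, combined with the fact that for a regular-at-$h$ analysis the tangent cone to this nilpotent locus spans the full complement of the center inside $Z_\gfr(h)$. One can sidestep subtleties by instead invoking Kostant's description of $\chi^{-1}(\chi(h))$ or by a direct computation of $d\chi_h$ in terms of the elementary symmetric-type invariants restricted to $Z_\gfr(h)$; alternatively, one could cite the relevant statement in \cite{Slo} or \cite{Steinberg} for the rank of the differential of the adjoint quotient at a semisimple point, which is precisely $\dim$ of the centralizer of the centralizer. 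Given the already-available Lemma \ref{RKdq}, the proof should be short.
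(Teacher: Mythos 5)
Your proposal follows essentially the same route as the paper: combine the inclusion and rank formula from Lemma \ref{RKdq} with a computation showing $\mathrm{rk}(d\chi_h)=\dim C(Z_\gfr(h))$, obtained by splitting $Z_\gfr(h)$ into its center and its semisimple part and showing the differential of the latter's adjoint quotient vanishes at $0$. The paper gets the decomposition $\mathrm{rk}(d\chi_x)=\dim C(Z_\gfr(h))+\mathrm{rk}(d\chi_{1,v})$ by citing Richardson rather than deriving it, and justifies $d\chi_{1,0}=0$ by noting that the generators of the invariant ring of a semisimple Lie algebra all have degree $\geq 2$. Your parenthetical justification for that last vanishing --- that ``every nonzero tangent direction is a nilpotent perturbation'' --- is not correct (a generic small perturbation of $0$ in $\mathfrak{l}^{ss}$ is semisimple, not nilpotent), but you correctly flag this step as the crux and your proposed fallbacks (direct computation with the invariants, or citing Steinberg/Slodowy) would close it; the degree argument is the clean fix.
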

\begin{proof}
If $x=h+v$ is the Jordan decomposition of $x\in \gfr$, for $h$ semisimple and $v$ nilpotent, then (see \cite{Rich})
\begin{align}\label{CaseOfInterestDerivatives}
\mathrm{rk}(d\chi_x)=\dim C(Z_\gfr(h))+ \mathrm{rk}(d\chi_{1,v}).
\end{align}
Here $\chi_1:\gfr_1:=[Z_{\gfr}(h), Z_{\gfr}(h)] \to \tfr_1/W_1$ is the induced adjoint quotient of the semisimple Lie algebra $\gfr_1$ and $C(Z_{\gfr}(h))$ is the cetner of the centralizer of $h$ in $\gfr$. 
By the previous lemma, it remains to show 
that  $d\chi_0=0$ for any semisimple adjoint quotient $\chi:\gfr\to \tfr/W$. 
This follows from the fact that the degrees $d_i$ of any basis $\hat{\chi}_i$ of $G_{ad}$-invariant polynomials are greater or equal $2$ because $\gfr$ is semisimple (\cite{BourbakiLie4-6}). 
\end{proof}

\begin{prop}\label{p:transversal}
Let $b:U\to \tfr/W$ be a morphism from an open $U\subset \C$ which is transversal to $q:\tfr\to \tfr/W$. Then it is also transversal to $\chi:\gfr\to \tfr/W$ and $\sigma:S\to \tfr/W$. 
\end{prop}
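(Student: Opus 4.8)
The plan is to deduce transversality to $\chi$ and $\sigma$ from transversality to $q$ by a pointwise comparison of images of differentials, using the two preceding lemmas to control $\mathrm{rk}(d\chi)$ along the relevant fibers.

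First I would unwind what transversality to $q:\tfr\to\tfr/W$ means for a morphism $b:U\to\tfr/W$ from a curve: at each $u\in U$ with image $\bar t = b(u)$, one has $\mathrm{im}(db_u) + \mathrm{im}(dq_t) = T_{\bar t}(\tfr/W)$ for every (equivalently, some) $t\in q^{-1}(\bar t)$. As noted in the text after (\ref{LocalCY3Cameral}), since $\dim U = 1$ this forces $\mathrm{rk}(dq_t)\geq r-1$, i.e. $q$ drops rank by at most one along the fiber over $b(u)$, and $b$ must be transverse to the discriminant in the usual sense. I would then recall that transversality of $b$ to $\chi$ (resp. $\sigma$) means the analogous surjectivity $\mathrm{im}(db_u)+\mathrm{im}(d\chi_x)=T_{\bar t}(\tfr/W)$ for every $x\in\chi^{-1}(\bar t)$ (resp. $x\in\sigma^{-1}(\bar t)=Y_b$-fiber), which is exactly the condition making $Y_b$ (and $\tilde U_b$) nonsingular.

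The key step is the following chain of inclusions of images of differentials, at a fixed point. Let $x\in\gfr$ with $\chi(x)=\bar t$, and write $x=h+v$ for its Jordan decomposition. By Lemma (the one with (\ref{CaseOfInterestDerivatives})), $\mathrm{rk}(d\chi_x)=\dim C(Z_\gfr(h))+\mathrm{rk}(d\chi_{1,v})\geq \dim C(Z_\gfr(h))$. Pick $t\in\tfr$ with $q(t)=\bar t$ conjugate to $h$; by Lemma \ref{RKdq} (and the lemma immediately after it), $\mathrm{rk}(dq_t)=\dim C(Z_\gfr(h))=\mathrm{im}(d\chi_h)$, and moreover $\mathrm{im}(dq_t)\subset\mathrm{im}(d\chi_h)\subset\mathrm{im}(d\chi_x)$, where the last inclusion holds because specializing $v\to 0$ can only shrink the image — or, more carefully, because $\mathrm{im}(d\chi_x)\supseteq\mathrm{im}(d\chi_h)$ follows from the rank formula together with the containment of images coming from $\gfr_1\subset Z_\gfr(h)$ and $\chi|_{Z_\gfr(h)\text{-part}}$. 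Since $T_{\bar t}(\tfr/W)$ has dimension $r$ and $\mathrm{im}(dq_t)$ has codimension at most $1$ (by transversality of $b$), while $\mathrm{im}(db_u)$ is at least $1$-dimensional and not contained in $\mathrm{im}(dq_t)$ when the codimension is exactly $1$, we get $\mathrm{im}(db_u)+\mathrm{im}(d\chi_x)\supseteq\mathrm{im}(db_u)+\mathrm{im}(dq_t)=T_{\bar t}(\tfr/W)$. This proves transversality to $\chi$. For $\sigma:S\to\tfr/W$: every fiber point of $\sigma$ over $\bar t$ is in particular a point of $\chi^{-1}(\bar t)\cap S$, and $\sigma=\chi|_S$, so $\mathrm{im}(d\sigma_x)$ need not a priori equal $\mathrm{im}(d\chi_x)$ — but here I would invoke that $S$ is a slice transverse to the subregular orbit, so that for $x\in S$ the composition $T_xS\hookrightarrow T_x\gfr\xrightarrow{d\chi_x} T_{\bar t}(\tfr/W)$ already has image $\mathrm{im}(d\chi_x)$ (this is essentially the statement, from Slodowy, that $\sigma$ is a semiuniversal deformation, whose Kodaira–Spencer map is surjective in the appropriate sense; alternatively it follows since $\chi$ restricted to the slice is smooth wherever $\chi$ is, and the rank of $d\chi$ is attained along the orbit directions which $S$ meets). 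Hence the same inclusion chain applies with $\chi$ replaced by $\sigma$.

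The main obstacle I anticipate is the claim $\mathrm{im}(dq_t)\subset\mathrm{im}(d\chi_x)$ — i.e. that the image of $d\chi$ at an arbitrary point dominates the image at the semisimple part — and the parallel claim that restricting $\chi$ to the Slodowy slice $S$ does not lose any of $\mathrm{im}(d\chi_x)$. Both are "morally clear" from Slodowy's theory (the slice carries the semiuniversal $\C^*$-deformation, so its Kodaira–Spencer map is an isomorphism onto the tangent space of the base of the miniversal deformation, which sits inside $T_{\bar t}(\tfr/W)$ with the right dimension), but spelling them out cleanly requires either citing the semiuniversality statement already recalled in Section \ref{s:local} or a short direct argument with the root-space decomposition of $Z_\gfr(h)$ as in the proof of Lemma \ref{RKdq}. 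Everything else is the elementary linear-algebra observation that a hyperplane plus a line not contained in it is the whole space.
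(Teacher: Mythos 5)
Your overall strategy is the right one and your reduction is the same as the paper's: everything comes down to comparing $\mathrm{im}(d\chi_x)$ (resp.\ $\mathrm{im}(d\sigma_x)$) with $\mathrm{im}(dq_t)$, and your treatment of the passage from $\chi$ to $\sigma=\chi_{|S}$ (the slice is transverse to the orbits it meets and $d\chi_x$ kills orbit directions, so $\mathrm{im}(d\sigma_x)=\mathrm{im}(d\chi_x)$) is fine. The genuine gap is exactly the step you flag yourself: the containment $\mathrm{im}(dq_t)\subseteq \mathrm{im}(d\chi_x)$ for a general $x=h+v$. Neither of your justifications establishes it. ``Specializing $v\to 0$ can only shrink the image'' is not an argument, and the rank formula (\ref{CaseOfInterestDerivatives}) together with Lemma \ref{RKdq} only gives the \emph{inequality} $\mathrm{rk}(d\chi_x)\geq \mathrm{rk}(dq_t)$; two subspaces of comparable dimensions need not be nested. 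Likewise, semiuniversality of $\sigma$ controls the Kodaira--Spencer map at the subregular nilpotent, not the position of $\mathrm{im}(d\chi_x)$ relative to $\mathrm{im}(dq_t)$ at an arbitrary fiber point. (The containment does follow from the full local structure theorem of Richardson--Luna, but that is strictly more than the rank statement your lemmas provide.)

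The paper closes this hole without ever proving the containment in general, by a rank dichotomy that you should adopt. Transversality of $b$ to $q$ from a curve forces $\mathrm{rk}(dq_t)\geq r-1$, whence $r\geq \mathrm{rk}(d\chi_x)\geq \mathrm{rk}(dq_t)\geq r-1$. If $\mathrm{rk}(d\chi_x)=r$, then $d\chi_x$ is surjective and transversality is automatic --- no containment needed. Otherwise $\mathrm{rk}(d\chi_x)=r-1=\mathrm{rk}(dq_t)=\dim C(Z_\gfr(h))$, so $Z_\gfr(h)=\tfr\oplus\gfr_\alpha\oplus\gfr_{-\alpha}$ and $[Z_\gfr(h),Z_\gfr(h)]\cong\mathfrak{sl}_2(\C)$; the rank formula then forces $\mathrm{rk}(d\chi_{1,v})=0$, and the explicit computation $d_A\det=(-2a,-c,-b)$ shows that the differential of $\det$ on $\mathfrak{sl}_2(\C)$ vanishes only at $A=0$, so $v=0$ and $x=h$ is semisimple. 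In that case $\mathrm{im}(d\chi_x)=\mathrm{im}(dq_t)$ exactly, by the lemma preceding the proposition, and transversality to $q$ transfers verbatim. So the containment you need is true at every point that can actually occur, but the clean way to see this is the dichotomy, not a general image comparison.
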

\begin{proof}
Let $x=h+v\in \gfr$ and $t\in \tfr$ such that $\chi(x)=q(t)$. 
The previous corollary together with (\ref{CaseOfInterestDerivatives}) implies that 
\[
r \geq \mathrm{rk}(d\chi_x)\geq \mathrm{rk}(dq_t)\geq r-1.
\]
If $x=h$ is semisimple, then $\chi$ is also transversal to $b$ at $x$ by the previous lemma. 
So we are left with the case $x=h+v$ and $\mathrm{rk}(d\chi_x)=r-1=\mathrm{rk}(dq_t)$. We claim that $v=0$ and $x=h$ must be semisimple which concludes the proof. Without loss of generality we assume again that $h\in \tfr$. 
Since $\dim C(Z_\gfr(h))=\mathrm{rk}(dq_t)=r-1$ by Lemma \ref{RKdq}, it follows that $Z_\gfr(h)=\tfr\oplus \gfr_\alpha \oplus \gfr_{-\alpha}$ for a root $\alpha$ with respect to $\tfr$. Therefore the derived algebra is 
\[
[Z_\gfr(h),Z_\gfr(h)]=\langle h_\alpha, \gfr_{\pm \alpha}\rangle \cong \mathfrak{sl}_2(\C),
\]
where $h_\alpha$ generates the commutator $[\gfr_\alpha,\gfr_{-\alpha}]\subset \tfr$. 
As a consequence, $v$ can be considered as a nilpotent element in $\mathfrak{sl}_2(\C)$ because $v\in \gfr_\alpha\oplus \gfr_{-\alpha}\subset Z_\gfr(h)$. 
By formula (\ref{CaseOfInterestDerivatives}) and $\mathrm{rk}(dq_t)=r-1$ we must have $\mathrm{rk}(d\chi_{1,v})=0$ for the adjoint quotient $\chi_1=\det:\mathfrak{sl}_2(\C)\to \tfr_1/W_1$. 
But $d_A \det=(-2a,-c,-b)$ for $A=aH+bX+cY\in \mathfrak{sl}_2(\C)$ in the standard basis $H,X,Y$ of $\mathfrak{sl}_2(\C)$. 
Hence we must have $v=0$, i.e. $x=h$ is semisimple (and subregular). 
\end{proof}

\section{Calabi-Yau orbifolds over Hitchin bases}\label{s:cy}
\subsection{Hitchin bases and cameral curves}\label{ss:hitchinbase}
This subsection mainly fixes notation and we refer to \cite{DG}, \cite{DP} for more details.
As before, we let $G$ be a simple complex Lie group with Dynkin diagram $\Delta=\Delta(G)$ and $\tfr\subset \gfr=\gfr(\Delta)$ a Cartan subalgebra in $\gfr=\mathrm{Lie}(G)$. 
If $\cu$ is a compact connected Riemann surface of genus $\geq 2$, we denote by $\mathcal{M}_{\mathrm{H}}(\cu,G)$ the moduli space of semistable $G$-Higgs bundles of degree $1\in \pi_1(G)$ and by 
\begin{equation*}
\Hit:\mathcal{M}_{\mathrm{H}}(\cu,G)\to \Bb(\cu, G)
\end{equation*}
the Hitchin map. 
It maps to the Hitchin base 
\begin{equation*}
\Bb\colon=\Bb(\cu, G)=H^0(\cu, \Ub)
\end{equation*}
for the bundle  
$
u\colon\Ub=K_\cu \times_{\C^*} \tfr/W\to \cu
$
of cones where $\C^*$ acts by the standard action on $\tfr/W$. 
Choosing generators $\chi_1,\dots ,\chi_r\in \C[\tfr]^W$ of Weyl group invariants gives $\Ub$ the structure of a vector bundle via the isomorphism
\begin{equation*}
\Ub\cong \bigoplus_{j=1}^r K_\cu^{d_j}
\end{equation*}
for the degrees $d_j=\deg(\chi_j)$. 
In particular, the Hitchin base $\Bb$ inherits a vector space structure. 
By its construction, $\Bb$ only depends on the Lie algebra $\gfr$ of $G$ so that the notation
\begin{equation*}
\Bb(\cu,\Delta):=\Bb(\cu, G)
\end{equation*}
is justified. 
The total space of the vector bundle 
$
\tilde{u}\colon\Ubt=K_\cu\otimes \tfr \to \cu
$
maps to $\Ub$ via the natural quotient map $\bm{q}\colon\Ubt \to \Ub$. 
The fibers of the universal cameral curve 
\begin{equation*}
\bm{p}:\btSigma:=ev^*\Ubt \to \Bb, 
\end{equation*}
for the evaluation map $ev:\cu\times \Bb\to \Ub$, are the cameral curves $\tcu_{b}:=\bm{p}^{-1}(b)$. 
These are smooth over the Zariski-open and dense 
\begin{equation*}
\Bbo:=\{ b\in \Bb~|~b \mbox{ intersects }\mathrm{disc}(\bm{q})\mbox{ transversally} \}
\end{equation*}
and the generic Hitchin fibers $\Hit^{-1}(b)$, $b\in \Bbo$, are generalized Prym varieties defined in terms of $\tcu_b$. 
%
\\
For later reference, we further need the cone
\begin{equation}\label{eq:B/W}
\tilde{\Bb}/W\hookrightarrow \Bb, \quad \tilde{\Bb}:=H^0(\cu, \Ubt),
\end{equation}
where the Weyl group $W$ acts pointwise on sections.
Away from $0\in \Bb$, this is the locus of completely reducible but reduced cameral curves, i.e. 
\begin{equation*}
\tcu_b=\coprod_{w\in W} \tcu_{b,w}, \quad \tcu_{b,w}\cong \cu. 
\end{equation*}
The irreducible components $\tcu_{b,w}$ intersect over the points of the divisor of the section
\begin{equation*}
\prod_{\alpha \in R} \alpha(b)\in H^0(\cu, K_\cu),
\end{equation*}
where $R=R(\Delta)$ is the corresponding root system (which makes sense because $\prod_{\alpha\in R}\alpha \in \C[\tfr]^W$). 

\subsection{Surfaces}
For the following constructions, we fix a spin bundle $L\in \mathrm{Pic}^{g-1}(\cu)$, $L^2=K_{\cu}$ and denote by $\alpha=\alpha_L\in H^1(\cu, \Oo^*)$ its cohomology class.
Moreover, let $\Delta$ be an irreducible Dynkin diagram with associated symmetry group $\Cc=AS(\Delta)$ and $S=S(\Delta)\subset \gfr(\Delta)$ a Slodowy slice. 
Twisting the $\C^*$-spaces $S$ and $\tfr/W$ by $L$, we obtain\footnote{Recall that we usually let $\C^*$ act on $\tfr/W$ by twice the natural weights.}
\begin{equation}\label{eq:Scb}
\Scb:=L\times_{\C^*} S, \quad L\times_{\C^*} \tfr/W \cong \Ub.
\end{equation}
By its $\C^*$-equivariance, the morphism $\sigma:S\to \tfr/W$ and its simultaneous resolution $\tilde{\sigma}:\tilde{S}\to \tfr$ glue to give the following commutative diagram 
\begin{equation}\label{eq:tScb}
\begin{tikzcd}
\tScb \ar[r, "\bm{\psi}"] \ar[d, "\btsigma"'] & \Scb \ar[d, "\bsigma"] \\
\Ubt \ar[r, "\bm{q}"] & \Ub 
\end{tikzcd}
\end{equation}
of $\Cc$-spaces where $\Cc$ acts trivially on $\Ubt$ and $\Ub$. 
However, it takes more care to glue the sections $\hat{\omega}\in \Gamma(\tilde{S},\Omega^2_{\tilde{\sigma}})$ and $\hat{\nu}\in \Gamma(S, K_{\sigma})$. 
\begin{lem}\label{l:bhatnu}
With the notation of (\ref{eq:Scb}) and (\ref{eq:tScb}), the following holds:
\begin{enumerate}[label=\roman*)]
\item 
The section $\hat{\omega}$ glues to a section $\hat{\bomega}\in \Gamma(\tScb,\Omega^2_{\bsigma}\otimes (\tilde{u}\circ \btsigma)^*K_\cu)$
which is $\Cc$-invariant. 
It induces a fiberwise period map 
\begin{equation*}
\bm{\eta}:\Ubt \to \tilde{u}^*\Ubt
\end{equation*}
which coincides with the tautological section $\bm{\tau}\in \Gamma(\Ubt, \tilde{u}^*\Ubt)$. 
\item
Likewise, the section $\hat{\nu}$ glues to a $\Cc$-invariant section $\hat{\bm{\nu}}\in \Gamma(\Scb, K_{\bsigma}\otimes (u\circ \bsigma)^*K_\cu)$ such that 
\begin{equation*}
\bm{\psi}^*\hat{\bm{\nu}}=\hat{\bm{\omega}}
\end{equation*}
under the natural isomorphism $\bm{\psi}^*K_{\bsigma}\cong \Omega^2_{\btsigma}$. 
\end{enumerate}
In particular, the sections $\hat{\bm{\sigma}}$ and $\hat{\bm{\nu}}$ give respective isomorphisms
\begin{equation*}
\Omega^2_{\bsigma}\cong (\tilde{u}\circ \btsigma)^*K_{\cu}^{-1} \quad \mbox{and} \quad K_{\bsigma} \cong (u\circ \bsigma)^*K_{\cu}^{-1}. 
\end{equation*}
\end{lem}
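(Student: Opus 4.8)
The plan is to glue everything weight by weight, using the $\C^*\times\Cc$-equivariance collected in Proposition~\ref{p:C*equivariant}, and then to read off the stated isomorphisms from the nowhere-vanishing established there; only the period statement in (i) is not purely formal. For the gluing, recall $\Scb=L\times_{\C^*}S$, $\tScb=L\times_{\C^*}\tilde{S}$, $\Ub=L\times_{\C^*}\tfr/W$ and $\Ubt=L\times_{\C^*}\tfr$. A $\C^*$-equivariant quasi-coherent sheaf on $S$ (resp.\ $\tilde{S}$) glues to a sheaf on $\Scb$ (resp.\ $\tScb$), and a $\C^*$-equivariant global section of weight $w$ glues to a global section of that sheaf tensored with the pullback of $L^{\otimes w}$ along the structure morphism to $\cu$. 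Applied to the relative canonical sheaves this glues $K_\sigma$ to $K_{\bsigma}$ and $\Omega^2_{\tilde\sigma}=K_{\tilde\sigma}$ to $\Omega^2_{\btsigma}$, compatibly with the square (\ref{eq:tScb}); moreover the $\C^*\times\Cc$-equivariant isomorphism $\Phi\colon\psi^*K_\sigma\to K_{\tilde\sigma}$ of Proposition~\ref{p:C*equivariant} glues to the natural isomorphism $\bm{\psi}^*K_{\bsigma}\cong\Omega^2_{\btsigma}$ (so on $\tScb$ one may write $\Omega^2_{\btsigma}$ for $\bm{\psi}^*\Omega^2_{\bsigma}$). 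Since $\hat\nu$, $\hat\omega$ have weight $2$ (Proposition~\ref{p:C*equivariant}) and $L^{\otimes 2}=K_\cu$, they glue to sections $\hat{\bm{\nu}}\in\Gamma(\Scb,K_{\bsigma}\otimes(u\circ\bsigma)^*K_\cu)$ and $\hat{\bomega}\in\Gamma(\tScb,\Omega^2_{\btsigma}\otimes(\tilde{u}\circ\btsigma)^*K_\cu)$. Because the $\C^*$- and $\Cc$-actions on $S$, $\tilde{S}$ commute and $\Cc$ acts trivially on $\Ub$, $\Ubt$, the $\Cc$-invariance of $\hat\nu$, $\hat\omega$ passes to $\hat{\bm{\nu}}$, $\hat{\bomega}$; and the equivariant identity $\psi^*\hat\nu=\hat\omega$ of (\ref{eq:nuomega}) glues to $\bm{\psi}^*\hat{\bm{\nu}}=\hat{\bomega}$, which is (ii). The concluding isomorphisms follow at once: $\hat\nu$ is nowhere vanishing, so $\hat{\bm{\nu}}$ is too (locally it is an $L$-twist of $\hat\nu$) and trivializes $K_{\bsigma}\otimes(u\circ\bsigma)^*K_\cu$, i.e.\ $K_{\bsigma}\cong(u\circ\bsigma)^*K_\cu^{-1}$; likewise $\hat\omega$ is nowhere vanishing (it is a relative symplectic form, or use that $\psi$ is surjective and $\psi^*\hat\nu=\hat\omega$), whence $\Omega^2_{\btsigma}\cong(\tilde{u}\circ\btsigma)^*K_\cu^{-1}$.

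Next I would treat the period map. Since $\tilde\sigma\colon\tilde{S}\to\tfr$ is topologically trivial (\cite{Slo2}), $\btsigma\colon\tScb\to\Ubt$ is a locally trivial topological fibration. Fix $\tilde b\in\Ubt$ lying above $c=\tilde{u}(\tilde b)$; a trivialization of $L$ near $c$ identifies $\btsigma$ over $\Ubt_c$ with $\tilde\sigma$, the fiber $\tScb_{\tilde b}$ with some $\tilde S_t$ ($t\in\tfr$), and the restriction of $\hat{\bomega}$ with the relative symplectic form $\hat\omega_t$ on $\tilde S_t$. Parallel transport in the (contractible) $\tfr$-family followed by the isomorphism (\ref{eq:H2per}) sends $[\hat\omega_t]$ to $P_t([\hat\omega_t])\in H^2(\tilde S_0,\C)^\Cc\cong\tfr$; re-applying the trivialization yields an element of $(K_\cu)_c\otimes\tfr=(\tilde{u}^*\Ubt)_{\tilde b}$. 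Rescaling the trivialization by $\lambda\in\C^*$ multiplies $t$ and $[\hat\omega_t]$ both by $\lambda^2$ and divides the last step by $\lambda^2$, so this element is independent of the choice; it defines the section $\bm{\eta}$. Fiberwise $\bm{\eta}$ is the map $t\mapsto P_t([\hat\omega_t])$ of (\ref{eq:H2per}), which --- in the normalized form of the corollary above (the standard identification of the second cohomology of the $\ADE$ resolution $\tilde S_0$ with the root lattice, for folded diagrams as in \cite{Beck-thesis}) --- is the identity. Hence $\bm{\eta}(\tilde b)=\tilde b$, i.e.\ $\bm{\eta}=\bm{\tau}$; regularity of $\bm{\eta}$ is then automatic.

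I expect the main obstacle to be exactly this period computation: (a) bookkeeping the $\C^*$-weights through the $L$-twist carefully enough to see that the identification $H^2(\tScb_{\tilde b},\C)\cong(\tilde{u}^*\Ubt)_{\tilde b}$ does not depend on the chosen trivialization of $L$, and (b) invoking (\ref{eq:H2per}) in its \emph{normalized} form --- that the period map \emph{is} the identity, not merely some $W$-equivariant automorphism of $\tfr$. By contrast the gluing in (ii) and the final isomorphisms are formal once Proposition~\ref{p:C*equivariant} is available: there $K_\sigma$ is already a line bundle (by nowhere-vanishing of $\hat\nu$) and $\tilde\sigma$ is smooth, so no reflexivity subtleties interfere with the twisting.
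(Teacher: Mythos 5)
Your proposal is correct and follows essentially the same route as the paper: both glue $\hat{\omega}$ and $\hat{\nu}$ using their weight-$2$ $\C^*$-equivariance (so the twist by $L^2=K_\cu$ appears), obtain $\Cc$-invariance from the commuting actions, and build $\bm{\eta}$ fiberwise from $P_{\tilde{S}}$ in local trivializations of $L$ — the paper merely carries out your abstract descent principle by an explicit \v{C}ech-cocycle computation, including the same weight bookkeeping you flag as the delicate point. The only small divergence is that the paper re-runs the codimension-two extension argument to produce $\hat{\bm{\nu}}$ from $\Scb^{reg}$, whereas you glue $\hat{\nu}\in\Gamma(S,K_\sigma)$ directly; this is legitimate, since Proposition \ref{p:C*equivariant} already supplies the nowhere-vanishing equivariant section on all of $S$.
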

\begin{proof}
To construct the section $\hat{\bm{\omega}}$, we need the gluing data for the sheaf $\Omega^2_{\btsigma}$. 
Let $\alpha=\alpha_L$ be the cocycle corresponding to $L$ and denote by $(\alpha_{ij})$ a \v{C}ech representative for a fixed open covering $(D_{ij})$ of $\cu$. 
In particular, $(\beta_{ij})=(\alpha_{ij}^2)$ is a cocycle for $K_\cu$. 
Trivializing $\tScb$ over each $D_{i}$ gives rise to the commutative diagram
\begin{equation*}
	\begin{tikzcd}
	& \tScb_{ij} \arrow[r, "\psi_i"]  & D_{ij}\times \tilde{S} \ar[r, "\mathrm{id}\times \tilde{\sigma}"] & D_{ij}\times \tfr  \\
	\Ubt \ar[ru, leftarrow, "\tilde{\bsigma}_{ij}"] \ar[rd, leftarrow, "\tilde{\bsigma}_{ij}"'] 
	\\
	& \tScb_{ij} \arrow[r, "\psi_j"] \arrow[uu, equal] & D_{ij}\times \tilde{S} \arrow[uu, "g_{ij}"'] \ar[r, "\mathrm{id}\times \tilde{\sigma}"]& D_{ij}\times \tfr  \ar[uu, "h_{ij}"'] 
	\end{tikzcd}
	\end{equation*}
over $D_{ij}=D_i\cap D_j$ with
	\begin{align*}
	&g_{ij}(x,s)=(x, \alpha_{ij}(x)\cdot s)=(x, \mu(\alpha_{ij}(x),s)),\\
	&h_{ij}(x,t)=(x,  \alpha_{ij}(x)\cdot t)=(x, \beta_{ij}(x)t). 
	\end{align*}
for $(x,s)\in D_{ij}\times \tilde{S}$ and the action map $\mu:\C^*\times \tilde{S}\to \tilde{S}$. 
On each $D_i\times \tilde{S}$ we have the sheaves $\mathcal{E}_i:=\Omega^2_{\mathrm{id}\times \tilde{\sigma}}\cong \mathrm{pr}_{2,i}^*\Omega^2_{\tilde{\sigma}}$ together with the sections $\mathrm{pr}_{2,i}^*\hat{\omega}$. 
Clearly, $\mathcal{E}_i$ and $\mathcal{E}_j$ are canonically isomorphic over $D_{ij}$. 
Now $\Omega^2_{\btsigma}$ is glued from\footnote{Note that $\psi_i^*\mathcal{E}_i\cong \Omega^2_{\tilde{\bsigma}|\tSc_i}$.}
 $\psi_i^*\mathcal{E}_i$ on $D_{ij}$ via the isomorphisms
	\begin{equation*}
	\begin{tikzcd}
	\varphi_{ij}:=\psi_i^*dg_{ji}^t:  \psi_j^*\mathcal{E}_j=\psi_i^*g_{ji}^*\mathcal{E}_j \arrow[r] & \psi_i^*\mathcal{E}_i
	\end{tikzcd}
	\end{equation*}
over $D_{ij}$. 
Here we denote by $dg_{ji}^t:g_{ij}^*\mathcal{E}_j\to \mathcal{E}_i=\mathcal{E}_j$ the natural morphism (over $D_{ij}$). 
Observe that we have $\varphi_{ij}\circ \varphi_{jk}=\varphi_{ik}$  and $(\varphi_{ij})$ is the gluing (or descent) datum for $\Omega^2_{\btsigma}$.
Indeed, we can write this composition as 
	\begin{equation*}
	\begin{tikzcd}
	\psi_k^*\mathcal{E}_k\ar[rr, "\varphi_{kj}"] \ar[d, "\cong"]  && \psi_j^*\mathcal{E}_j \ar[rr, "\varphi_{ji}"]  \ar[d, "\cong"] && \psi_i^*\mathcal{E}_i \ar[d, "\cong"] \\
	\psi_i^*(g_{ji}^*) g_{kj}^*\mathcal{E}_j \ar[rr, "\psi_i^* g_{ji}^* dg_{kj}^t"] && \psi^*_i g_{ji}^*\mathcal{E}_j \ar[rr, "\psi_i^*dg_{ji}^t"] && \psi_i^*\mathcal{E}_j.
	\end{tikzcd}
	\end{equation*}
The lower line is $\varphi_{ik}$ by the chain rule, showing the cocycle condition for $(\varphi_{ij})$. 
	\begin{claim}
		Define the local sections 
		\begin{equation*}
		\hat{\omega}_i:=\psi_i^*\mathrm{pr}_{2,i}^*\hat{\omega}\in \Gamma(\mathcal{S}_i,\psi_i^*\mathcal{E}_i).
		\end{equation*}
		On the overlaps $\tSc_{ij}$, they transform as follows
		\begin{equation}\label{TrafoRuleSections}
		\varphi_{ij}(\hat{\omega}_j)=((\mathrm{pr}_{1,i}\circ \psi_i)^*\beta_{ji}) \, \hat{\omega}_i. 
		\end{equation}
	\end{claim}
Before we prove this claim, let us see how it yields the desired section. 
Observe that $(\mathrm{pr}_{1,i}\circ\psi_i)^* \beta_{ji}$ is a cocycle for $(\tilde{u}\circ \btsigma)^*K_{\Sigma}^{-1}$. 
Hence in order to obtain a well-defined global section on $\tilde{\mathcal{S}}$, we have to tensor with $(\tilde{u}\circ \btsigma)^*K_{\Sigma}$. 
More precisely, let $\zeta_i\in \Gamma(D_i,K_\Sigma)$ be the local frames of $K_\cu$ over $D_i$ so that $\zeta_i=\beta_{ij}~\zeta_j$ on $D_{ij}$. 
Letting $\hat{\zeta}_i:=\psi_i^*\mathrm{pr}_{1,i}^*\zeta_i$, we see that the local sections
	\begin{equation*}
	\hat{\omega}_i\otimes \hat{\zeta}_i\in\Gamma(\tScb_i,\Omega^2_{\btsigma}\otimes (\tilde{u}\circ \btsigma)^*K_\Sigma)
	\end{equation*}
glue to give the global section $\bm{\hat{\omega}}\in \Gamma(\tScb,\Omega^2_{\btsigma}\otimes (\tilde{u}\circ \btsigma)^*K_\Sigma)$ as claimed. 
\\ 
\\
We still have to give a proof of (\ref{TrafoRuleSections}). 
To simplify notation, we drop the subscript $ij$ if not necessary and only write $g:D\times \tilde{S}\to D\times \tilde{S}$ etc. 
Then the second component of $dg:TD\oplus T\tilde{S}\to TD\oplus T\tilde{S}$ at $(x,s)\in D\times \tilde{S}$ is given by  
\begin{equation}\label{SecondComponent}
	d\mu_{\alpha(x)\cdot s}(d\alpha_x(v),w)=d\mu_{\alpha(x)\cdot s}(d\alpha_x(v),0)+d\mu_{\alpha(x)\cdot s}(0, w). 
\end{equation}
Note that $d\mu_{\alpha(x)\cdot s}(0,w)=d(\mu_{\alpha(x)})_s(w)$ where $\mu_{\alpha(x)}=\mu(\alpha(x),-)$.
Now let $p\in \tSc_{ij}$ and $\psi_i(p)=(x,s)\in D_{ij}\times \tilde{S}$.
Then we clearly have  
	\begin{equation*}
	\ker d_{(x,s)}(\mathrm{id}\times \tilde{\sigma})=0\oplus \ker d_s\tilde{\sigma}\subset T_xD_{ij}\oplus T_s\tilde{S}. 
	\end{equation*}
In particular, the first summand in (\ref{SecondComponent}) plays no role for our discussion. 
	For $w_k\in \ker d_s\tilde{\sigma}$ ($k=1,2$) one computes
	\begin{align*}
	&\quad~\varphi_{ij}(\hat{\omega}_j)_p\Big((0,w_1),(0,w_2)\Big)\\
	&=\mathrm{pr}_{2,j}^*\hat{\omega}_{g_{ji}(x,s)}\circ dg_{ji, (x,s)}\Big((0,w_1),(0,w_2)\Big) & (\psi_j\circ \psi_i^{-1}=g_{ji}) \\
	&=\beta_{ji}(x) \, \hat{\omega}_s(w_1,w_2) & (\C^*\text{-equivariance and }(\ref{SecondComponent}))\\ 
	&=(\mathrm{pr}_{1,i}\circ \psi_i)^*\beta_{ji}(p) \, (\hat{\omega}_i)_p\Big((0,w_1),(0,w_2)\Big). 
	\end{align*}
Here we have used the $\C^*$-equivariance of the relative form $\hat{\omega}$ showing (\ref{TrafoRuleSections}). 
	\\
To construct the period map $\bm{\eta}$ out of $\hat{\bomega}$, observe that it induces the morphism
	\begin{equation*}
	(x,t)\mapsto P_{\tilde{S}}(t)\otimes ((x,t),\zeta_i(x))
	\end{equation*}
in each trivialization $\Ubt_{|D_i} \cong D_i\times \tfr$ where $P_{\tilde{S}}$ is as in (\ref{eq:H2per}). 
These morphisms glue to give the morphism 
	\begin{equation*}
	\bm{\eta}: \Ubt\to \tfr\otimes\tilde{u}^*K_\Sigma
	\end{equation*}
which coincides with the tautological section $\bm{\tau}\in H^0(\Ubt,\tilde{u}^*\Ubt)$ by construction. 
\\
The existence of the global section $\bm{\hat{\nu}}\in H^0(\Scb,K_{\bsigma}\otimes (u\circ \bsigma)^*K_\Sigma)$ works similarly:
There exists a section $\bm{\hat{\nu}}^{reg}\in \Gamma(\Scb^{reg}, K_{\bsigma})$, which is constructed as $\hat{\bm{\omega}}$ by replacing $\hat{\omega}$ with $\hat{\nu}$. 
Here $\Scb^{reg}\subset \Scb$ is the locus which is glued from $S^{reg}\subset S$. 
Using a codimension argument as in the proof of Proposition \ref{p:C*equivariant}, we see that it uniquely extends to a section $\bm{\hat{\nu}}\in \Gamma(\Scb, K_{\bsigma})$. 
It satisfies $\bm{\psi}^*\bm{\hat{\nu}}=\bm{\hat{\omega}}$ under the isomorphism
$\bm{\psi}^*K_{\bsigma}\cong K_{\btsigma}$ by construction, since this holds for the corresponding local sections. 
\end{proof}

\subsection{Calabi-Yau threefolds}
The family $\bsigma: \Scb\to \Ub$ of surfaces over $\Ub$ pulls back to the product $\cu\times \Bb$, $\Bb=\Bb(\cu, \Delta)$ via the evaluation map $\ev:\cu \times \Bb(\cu,\Delta)\to \Ubt$. 
Projecting to the second factor yields the family 
\begin{equation*}
\begin{tikzcd}
\X_L(\Delta) \ar[rr, bend left, "\pib_L"] \ar[r, "\pib_{1,L}"] & \cu \times \Bb \ar[r, "\pib_{2,L}"] & \Bb
\end{tikzcd}
\end{equation*}
of threefolds with a $\Cc$-action over the Hitchin base $\Bb$. 
Analogously, pulling back $\tScb$ along the natural map $\btcu\to \Ubt$ gives the family 
\begin{equation*}
\begin{tikzcd}
\tilde{\X}_L(\Delta) \ar[rr, bend left, "\tilde{\pib}_L"] \ar[r, "\tilde{\pib}_{1,L}"] & \btcu \ar[r, "\tilde{\pib}_{2,L}"] & \Bb 
\end{tikzcd}
\end{equation*}
of threefolds with $\Cc$-action over $\Bb$. 
\begin{thm}\label{thm:cy3s}
Let $\Delta$ be an irreducible Dynkin diagram and $L$ be a spin bundle of $\cu$. 
Then 
\begin{equation*}
\begin{tikzcd}
\X_L(\Delta)\ar[r] & \Bb(\cu, \Delta) & \ar[l] \tilde{\X}_L(\Delta)
\end{tikzcd}
\end{equation*}
are algebraic families of quasi-projective Gorenstein threefolds with $\Cc$-trivial canonical class. 
They are smooth over the locus $\Bbo(\cu,\Delta)\subset \Bb(\cu, \Delta)$ of smooth cameral curves.
\end{thm}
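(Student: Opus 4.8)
The plan is to deduce the theorem from Lemma~\ref{l:bhatnu} and the transversality results of Section~\ref{s:local}, in four steps. For quasi-projectivity and algebraicity I would note that $S=x+\ker\ad(y)$ is an affine space carrying a contracting $\C^*$-action with fixed point $x$ (positive weights on $\ker\ad(y)$, since $\ad(y)$ annihilates the lowest $\ad(h)$-weight vectors), so after translating by $-x$ it is $\C^*$-equivariantly a closed subvariety of the positively graded vector space $\ker\ad(y)$; hence $\Scb=L\times_{\C^*}S$ is a closed subvariety of the vector bundle $L\times_{\C^*}\ker\ad(y)$ over the projective curve $\cu$, in particular quasi-projective, and likewise $\tScb$ lies in the quasi-projective twist of $\tilde{\gfr}=G\times^B\bfr$. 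Since $\bsigma$, $\btsigma$, $\ev$, $\bm q$ and the projections are morphisms of quasi-projective varieties, $\X_L$ and $\tilde{\X}_L$ are quasi-projective and map algebraically to $\Bb$.

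For the Gorenstein/threefold statement: $S\cong\C^{r+2}$ is smooth and $\sigma\colon S\to\tfr/W\cong\C^{r}$ has all fibres of dimension $2$, so by miracle flatness $\sigma$ is flat with complete-intersection --- hence Gorenstein --- surface fibres; for $\btsigma$ one uses instead that Grothendieck's simultaneous resolution $\tilde\sigma\colon\tilde S\to\tfr$ is \emph{smooth} (cf.\ \cite{Slo}), so $\btsigma$ is a smooth family of surfaces. These properties survive the $L$-twist and base change along $\ev$, whence $\pib_{1,L}\colon\X_L\to\cu\times\Bb$ is flat with Gorenstein surface fibres and $\tilde{\X}_L$ is smooth over $\btcu$. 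Because $\cu\times\Bb$ is smooth and $\btcu$ is Gorenstein --- being finite flat and lci over $\cu\times\Bb$ as a pullback of $\bm q$ --- the total spaces, and each fibre $X_b$, $\tilde{X}_b$, are Gorenstein, and a dimension count ($2+1=3$) makes them threefolds. Finally $\pib_L$ (resp.\ $\tilde{\pib}_L$) is flat, being $\pib_{1,L}$ followed by a smooth projection.

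For the $\Cc$-trivial canonical class, write $X_b=\cu\times_{\Ub}\Scb$ with projections $\mathrm{pr}_1\colon X_b\to\cu$ and $\mathrm{pr}_2\colon X_b\to\Scb$; since $b$ is a section of $u$ one has $u\circ\bsigma\circ\mathrm{pr}_2=\mathrm{pr}_1$, so the nowhere-vanishing $\Cc$-invariant section $\hat{\bm{\nu}}$ of $K_{\bsigma}\otimes(u\circ\bsigma)^*K_\cu$ from Lemma~\ref{l:bhatnu} pulls back to a nowhere-vanishing $\Cc$-invariant section of
\[
\mathrm{pr}_2^*\bigl(K_{\bsigma}\otimes(u\circ\bsigma)^*K_\cu\bigr)=K_{X_b/\cu}\otimes\mathrm{pr}_1^*K_\cu=K_{X_b}.
\]
Hence $K_{\X_L/\Bb}$ is $\Cc$-equivariantly trivial, every $K_{X_b}\cong\Oo_{X_b}$, and $X_b$ is Calabi-Yau. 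The same computation with $\hat{\bomega}$ and $\btcu$ in place of $\hat{\bm{\nu}}$ and $\cu\times\Bb$ identifies $K_{\tilde{X}_b}$, $\Cc$-equivariantly, with the pullback of $K_{\tcu_b/\cu}$ from the cameral curve, on which the trivially-acting group $\Cc$ acts trivially.

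For smoothness over $\Bbo$, since $\pib_L$ is flat it suffices to show $X_b$ is non-singular when $b\in\Bbo$. Over a coordinate disc $U\subset\C$ in $\cu$, $\Scb$ trivializes to $U\times S$, so $X_b$ is Zariski-locally the model $Y_{b|_U}$ of diagram~(\ref{LocalCY3Cameral}); for $b\in\Bbo$ the section meets $\mathrm{disc}(\bm q)$ transversally, i.e.\ each $b|_U$ is transversal to $q$, and \emph{this is the step the proof really turns on}: Proposition~\ref{p:transversal} promotes transversality to $q$ into transversality to $\sigma$, so $Y_{b|_U}$, hence $X_b$, is non-singular by Section~\ref{SloSectRemOnDerivatives}. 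For $\tilde{\X}_L$ one instead uses that $\btsigma$ is smooth and that $\tcu_b$ is non-singular exactly for $b\in\Bbo$, so $\tilde{X}_b$ is smooth. I expect the one genuinely delicate input to be Proposition~\ref{p:transversal} --- for folded $\Delta$ it is not obvious that the discriminant of $\bm q$ already controls the Slodowy slice map $\sigma$ --- after which the remaining work is globalizing these local facts and bookkeeping the $K_\cu$-twists of Lemma~\ref{l:bhatnu}.
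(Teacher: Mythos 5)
Your proof follows essentially the same route as the paper's: quasi-projectivity because $\Scb$ and $\Ub$ are (affine) bundles over the projective curve $\cu$, the Gorenstein property because $\bsigma$ is a Gorenstein morphism (your miracle-flatness/lci argument is exactly what the paper leaves implicit), the $\Cc$-trivial canonical class by pulling back $\hat{\bm{\nu}}$ and applying base change plus adjunction --- your identity $u\circ\bsigma\circ\mathrm{pr}_2=\mathrm{pr}_1$ is precisely why $(K_{\pib_1}\otimes(\mathrm{pr}_1\circ\pib_1)^*K_\cu)_{|X_b}\cong K_{X_b}$ --- and smoothness over $\Bbo$ via the local models of Section \ref{SloSectRemOnDerivatives} and Proposition \ref{p:transversal}. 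You have correctly identified that proposition as the one nontrivial input.

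The only point of divergence is $\tilde{\X}_L(\Delta)$. The paper's proof asserts the argument ``works in complete analogy'' with $\tilde{\bm{s}}=\tilde{j}^*\hat{\bomega}$, whereas your bookkeeping yields only a $\Cc$-equivariant isomorphism of $K_{\tilde{X}_b}$ with the pullback of $K_{\tcu_b}\otimes\pi^*K_\cu^{-1}$ along $\tilde{X}_b\to\tcu_b$, for $\pi:\tcu_b\to\cu$ the covering map. That line bundle is the ramification divisor class of $\pi$ and has positive degree for $b\in\Bbo$, so it is not trivial; your version is the honest output of the adjunction computation, and literal triviality of the canonical class is obtained only for $\X_L(\Delta)$, where the base of the surface fibration is $\cu$ itself. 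This is a genuine (if minor) discrepancy with the statement as written, not a gap in your argument; it would be worth flagging rather than silently absorbing into the phrase ``$\Cc$-trivial canonical class.''
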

\begin{proof}
The projections $\Ub\to \cu$ and $\Scb \to \cu$ are quasi-projective morphisms because they are (affine) bundles over the projective curve $\cu$. 
Therefore the morphism $\bsigma: \Scb \to \Ub$ which factorizes as 
\begin{equation*}
\begin{tikzcd}
\Scb \ar[d, "\bsigma"] \ar[dr] & \\
\Ub \ar[r ] & \cu
\end{tikzcd}
\end{equation*}
is quasi-projective as well. 
Hence $\pib:\X \to \Bb$ is quasi-projective. 
Using the fact that $\bsigma: \Scb\to \Ub$ is a Gorenstein morphism, a similar argument shows that each $X_b=\pib^{-1}(b)$, $b\in \Bb$, is Gorenstein. 
\\
To see the statement about the canonical class, let $j:\X\to \Scb$ be the natural morphism obtained from base change. 
Then the section $\hat{\bm{\nu}}$ of Lemma \ref{l:bhatnu} pulls back to (dropping $L$ from the notation)
\begin{equation}
	\bm{s}:=j^*\hat{\bm{\nu}}\in H^0(\X, K_{\pib_1} \otimes (\mathrm{pr}_1\circ \pib_1)^*K_\Sigma).
\end{equation}
Base change and the adjunction formula yields the isomorphism
\begin{equation*}
		(K_{\pib_1}\otimes (\mathrm{pr}_1\circ \pib_1)^*K_\cu)_{|X_b}\cong K_{\pi_b}\otimes \pi_b^*K_\cu \cong K_{X_b}. 
	\end{equation*}
Hence each restriction $s_b:=\bm{s}_{|X_b}$ is a nowhere vanishing section of the locally free sheaf $X_b$. 
Since the $\Cc$-action on $X_b$ is pulled back from the $\Cc$-action on $\Scb$, $s_b$ is $\Cc$-invariant by Lemma \ref{l:bhatnu}. 
Finally, the statement about smoothness follows from Proposition \ref{p:transversal}.
\\
The proof for $\tilde{\X}_L(\Delta)$ works in complete analogy by replacing $\bm{s}$ with
\begin{equation}
\tilde{\bm{s}}:=\tilde{j}^*\hat{\bomega}\in H^0(\tilde{\X}, K_{\tilde{\pib}_1}\otimes (\mathrm{pr}_1\circ \bm{p}_1)^*K_\cu )
\end{equation}
for the natural morphism $\tilde{j}:\tilde{\X}\to \tScb$.
\end{proof}
We close this subsection with the deformation-theoretic meaning of the Hitchin base $\Bb=\Bb(\cu, \Delta)$ for the family $\X\to \Bb$ (see \cite{DDD} for the $\mathrm{A}_1$-case). 
To this end, observe that the central fiber 
\begin{equation*}
X_0=L\times_{\C^*} S_{\bar{0}}
\end{equation*}
has a curve of $\Delta$-singularities along $\cu \hookrightarrow X_0$. 
It is easily seen from the construction that $X_b$, $b\neq 0$, only has isolated singularities. 
In particular, the deformation $\X\to \Bb$ of $X_0$ is not a locally trivial one. 
More precisely, we have:
\begin{thm}\label{thm:defX0}
Let $X=X_0$ be the central fiber of a family $\X\to \Bb$ of quasi-projective Gorenstein Calabi-Yau with $\Cc$-trivial canonical class. 
Then 
\begin{equation*}
\mathrm{Ext}^1(\Omega_X^1,\Oo_X)/H^1(X,T_X) \cong \Bb, 
\end{equation*}
i.e. the space of $\Cc$-deformations of $X$ modulo locally trivial $\Cc$-deformations is isomorphic to the corresponding Hitchin base. 
\end{thm}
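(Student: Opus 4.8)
The plan is to compute $\mathrm{Ext}^1(\Omega^1_X,\Oo_X)$ via the local-to-global Ext spectral sequence and to read off the two contributions. Throughout, all deformation-theoretic quantities are understood in the $\Cc$-equivariant sense (for $\Cc=1$ this is no restriction); since $\Cc$ is finite and we work over $\C$ this amounts to passing to $\Cc$-invariants, and the invariant-part functor is exact, so the exact sequences below remain exact after taking invariants. The spectral sequence
\begin{equation*}
E_2^{p,q}=H^p\!\big(X,\mathcal{E}xt^q(\Omega^1_X,\Oo_X)\big)\ \Longrightarrow\ \mathrm{Ext}^{p+q}(\Omega^1_X,\Oo_X),\qquad X=X_0,
\end{equation*}
has five-term exact sequence, after passing to $\Cc$-invariants,
\begin{equation*}
0\to H^1(X,\Tc_X)^\Cc\to \mathrm{Ext}^1(\Omega^1_X,\Oo_X)^\Cc
\xrightarrow{\ r\ } H^0\!\big(X,\mathcal{E}xt^1(\Omega^1_X,\Oo_X)\big)^\Cc
\xrightarrow{\ \partial\ } H^2(X,\Tc_X)^\Cc,
\end{equation*}
where $\Tc_X=\mathcal{H}om(\Omega^1_X,\Oo_X)$ and $H^1(X,\Tc_X)$ parametrises the locally trivial deformations of $X$. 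Thus the theorem follows once I establish two points: (a) $H^0\!\big(X,\mathcal{E}xt^1(\Omega^1_X,\Oo_X)\big)^\Cc\cong\Bb$, and (b) the connecting map $\partial$ vanishes on $\Cc$-invariants.

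For (a) I would use that $X_0=L\times_{\C^*}S_{\bar{0}}$ is, étale- (or analytically-) locally over $\cu$, a product $D\times Y_0$ of a disc with the $\Delta$-singularity $Y_0=\sigma^{-1}(\bar{0})$, the transition data along $\cu$ being the $L$-cocycle acting through the $\C^*$-action. Since $\Omega^1_D$ is locally free, flat base change along $\mathrm{pr}_{Y_0}$ identifies $\mathcal{E}xt^1(\Omega^1_{D\times Y_0},\Oo)$ with $\mathrm{pr}_{Y_0}^*\,\mathcal{E}xt^1_{Y_0}(\Omega^1_{Y_0},\Oo_{Y_0})$; as this is supported along $D\times\{0\}$, the sheaf $\mathcal{E}xt^1(\Omega^1_{X_0},\Oo_{X_0})$ is supported set-theoretically on $\cu$, and its pushforward $\mathcal{N}$ to $\cu$ is a vector bundle carrying a fibrewise $\C^*\times\Cc$-action (beware that $\mathcal{E}xt^1$ itself may be supported on a nilpotent thickening of $\cu$, since the $\Oo_{Y_0}$-module $T^1(Y_0)$ need not be killed by the maximal ideal). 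Now the family $\bsigma:\Scb\to\Ub$, restricted to the zero section $\cu\hookrightarrow\Ub$, is a deformation of $X_0$ which fibrewise over $\cu$ is, by Slodowy's theorem, the $\C^*\times\Cc$-semi-universal deformation of $Y_0$ with base $\tfr/W$; hence its Kodaira–Spencer map relative to $\cu$ is a $\Cc$-equivariant isomorphism between the relative tangent bundle of $\Ub\to\cu$ along the zero section — which is $\Ub$ itself — and $\mathcal{N}^\Cc$. Therefore
\begin{equation*}
\mathcal{N}^\Cc\ \cong\ \Ub\ \cong\ \bigoplus_{j=1}^{r}K_\cu^{d_j},\qquad\text{so}\qquad
H^0\!\big(X_0,\mathcal{E}xt^1(\Omega^1_{X_0},\Oo_{X_0})\big)^\Cc\cong H^0(\cu,\Ub)=\Bb(\cu,\Delta).
\end{equation*}

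For (b) I would exhibit a one-sided inverse of $r$ on $\Cc$-invariants, using the family $\X\to\Bb$ itself. Its Kodaira–Spencer map at $0\in\Bb$ is a $\Cc$-equivariant map $\kappa:\Bb=T_0\Bb\to\mathrm{Ext}^1(\Omega^1_{X_0},\Oo_{X_0})$ (landing in the $\Cc$-invariant part since the family is $\Cc$-equivariant over a trivial-$\Cc$ base), and I claim $r\circ\kappa=\mathrm{id}_\Bb$ under the identification of (a). Indeed $\X=\ev^*\Scb$ with $\ev:\cu\times\Bb\to\Ub$, $(p,b)\mapsto b(p)$, and $\ev$ restricts to the zero section on $\cu\times\{0\}$; so at $p\in\cu$ the derivative of $\ev$ in the $\Bb$-direction is the linear evaluation $\Bb\to\Ub_p$, $b\mapsto b(p)$, and composing with the Kodaira–Spencer isomorphism $\Ub_p\xrightarrow{\sim}\mathcal{N}^\Cc_p$ of (a) shows that $r(\kappa(b))$ is the section $p\mapsto b(p)$, i.e.\ $b$ itself. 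Hence $r$ is surjective on $\Cc$-invariants, $\partial=0$ there, and the five-term sequence collapses to a short exact sequence
\begin{equation*}
0\to H^1(X,\Tc_X)^\Cc\to \mathrm{Ext}^1(\Omega^1_X,\Oo_X)^\Cc\to\Bb\to 0 ,
\end{equation*}
which is exactly the asserted isomorphism $\mathrm{Ext}^1(\Omega^1_X,\Oo_X)/H^1(X,T_X)\cong\Bb$.

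The step I expect to be the real work is (a), more precisely the identification $\mathcal{N}^\Cc\cong\Ub$. One must check that the transverse/fibrewise structure of $\X\to\Bb$ along the singular curve is genuinely the $L$-twist of Slodowy's $\C^*\times\Cc$-semi-universal deformation — so that its semi-universality applies fibrewise — deal cleanly with the possible nilpotent thickening of $\mathrm{supp}\,\mathcal{E}xt^1$ by passing to the pushforward $\mathcal{N}$ on $\cu$, and keep track of the $\C^*$-weights (equivalently the powers of the spin bundle $L$) so that the $\Cc$-invariant directions assemble into $\bigoplus_j K_\cu^{d_j}$ and not some other twist; here it is essential that $\Cc$ acts trivially on $\tfr/W$ while acting non-trivially on the remaining directions of $T^1(Y_0)$, so that $\mathcal{N}^\Cc$ has the correct rank $r=\mathrm{rk}(\gfr(\Delta))$. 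Everything else is the formalism of the local-to-global Ext spectral sequence.
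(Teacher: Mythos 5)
Your proposal is correct, and it shares the paper's overall skeleton — the local-to-global Ext spectral sequence and the identification of $H^0(X,\ExtSO^1(\Omega^1_X,\Oo_X))$ with $\Bb$ — but both key steps are carried out by genuinely different means. Where you split the five-term sequence by exhibiting a section of $r$ via the Kodaira--Spencer map of $\X\to\Bb$, the paper simply observes that $\pi\colon X\to\cu$ is an affine morphism onto a curve, so the Leray spectral sequence gives $H^2(X,T_X)=0$ and the five-term sequence collapses outright to $\mathrm{Ext}^1(\Omega^1_X,\Oo_X)/H^1(X,T_X)\cong H^0(X,\ExtSO^1(\Omega^1_X,\Oo_X))$; this is less work and gives the slightly stronger statement that \emph{all} of $H^0(\ExtSO^1)$ is accounted for, whereas your argument has the virtue of exhibiting the isomorphism explicitly as the Kodaira--Spencer map of the family, which is the statement one actually wants for the deformation-theoretic interpretation. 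For the identification of $\ExtSO^1(\Omega^1_X,\Oo_X)$ itself, the paper does not invoke Slodowy's semi-universality fibrewise: it realizes $X=Z(\tau)\subset\Scb$ as a regular embedding cut out by a section $\tau$ of $\pi_{\Scb}^*\Ub$, so that $\mathcal{N}_{X/\Scb}\cong\pi_{\Scb}^*\Ub|_X$, dualizes the conormal sequence to present $\ExtSO^1(\Omega^1_X,\Oo_X)$ as a quotient of this normal sheaf, and then checks by the explicit local computation $\mathrm{Ext}^1_B(\Omega^1_B,B)\cong\C[t,x,y,z]/\langle f,\partial f\rangle$ (a free rank-$r$ module over $\Oo(D)$) that the quotient has full rank $r$ along $\cu$. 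This neatly disposes of the point you rightly flag as "the real work": the twists $K_\cu^{d_j}$ come for free from the normal bundle $\pi_{\Scb}^*\Ub$ rather than having to be tracked through $\C^*$-weights in a fibrewise semi-universality argument, and the scheme-theoretic thickening of the support (which you correctly handle by pushing forward to $\cu$) is absorbed into the rank count. Your route is sound, but you should note that your appeal to "flat base change along $\mathrm{pr}_{Y_0}$" needs the comparison of relative and absolute $\ExtSO^1$ via the conormal sequence of $X\to\cu$, and that the local freeness of your $\mathcal{N}$ over $\cu$ still requires the local Tjurina-algebra computation (or an equivalent flatness argument) that the paper makes explicit.
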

\begin{proof}
We first consider the case $\Cc=1$ and set $r:=\mathrm{rk}(\gfr)$. 
Since $\pi:X\to \cu$ is affine, the Leray spectral sequence implies that $H^2(X,T_X)=0$. 
Therefore the local-to-global spectral sequence implies 
\begin{equation*}
\mathrm{Ext}^1(\Omega_X^1, \Oo_X)/H^1(X,T_X)\cong H^0(X, \ExtSO^1(\Omega_X^1,\Oo_X)). 
\end{equation*}
If $k:\cu\hookrightarrow X$ is the closed imbedding, then we claim
\begin{equation}\label{eq:ext1}
\mathscr{E}:=\ExtSO^1(\Omega_X^1,\Oo_X)\cong k_*\Ub. 
\end{equation}
As a first step, we realize $X$ as a regular imbedding.
Let $\pi_{\Scb}:\Scb\to \cu$ be the natural projection and $\pi_{\Scb}^*{\Ub} \to \Scb$ the induced vector bundle. 
Then there exists a unique section $\tau:\Scb\to \pi^*_{\Scb} \Ub$ such that $pr_{\Ub}\circ \tau=\bsigma$ for the natural projection $pr_{\Ub}:\pi^*_{\Scb}\Ub \to \Ub$. 
In a trivialization of $\Scb_{|D}\cong D\times S$ for an affine $D\subset \cu$, the section is given by 
\begin{equation*}
D\times S \to (D\times S) \times \tfr/W, \quad (x,s)\mapsto ((x,s),\sigma(s)). 
\end{equation*}
Therefore the vanishing locus $Z(\tau)\subset \Scb$ coincides with $i:X\hookrightarrow \Scb$ and is a regular imbedding. 
The latter implies that the normal sheaf satisifes
\begin{equation}\label{eq:normalsheaf}
\mathcal{N}_{X/\Scb}\cong \pi^*_{\Scb}\Ub_{|X}, 
\end{equation}
cf. \cite{FultonLang}, Chapter IV. 
In order to compute $\mathscr{E}$, let $\mathcal{J}$ be the ideal sheaf defining $X\subset \Scb$. 
Since the latter is a regular imbedding, we have the locally free resolution 
\begin{equation*}
\begin{tikzcd}
0 \ar[r] & \mathcal{J}/\mathcal{J}^2 \ar[r] & i^*\Omega^1_{\Sc} \ar[r] & 0 
\end{tikzcd}
\end{equation*}
of $\Omega_X^1$. 
Dualizing gives the sequence
\begin{equation*}
\begin{tikzcd}
0 \ar[r] & \HomS_{\Oo_X}(i^*\Omega^1_{\Scb},\Oo_X) \ar[r, "\varphi"] & \mathcal{N}_{X/\Scb} \ar[r] & 0
\end{tikzcd}
\end{equation*}
so that $\mathrm{coker}(\varphi)=\ExtSO^1(\Omega_X^1,\Oo_X)$ which is supported on $\cu \subset X$. 
In order to prove (\ref{eq:ext1}) it therefore suffices to show that $\mathscr{E}$ has constant rank $r$ along $\cu$ in light of (\ref{eq:normalsheaf}).
\\
Let $U:=D\times S_{\bar{0}}\subset X$ and $D\subset \cu$ be an affine open in $X$ and $\cu$ respectively. 
Hence 
\begin{equation*}
U=\mathrm{Spec}(B), \quad B=\C[t,x,y,z]/\langle f(x,y,z) \rangle,\quad D=\mathrm{Spec}(C), \quad C=\C[t,x,y,z],
\end{equation*}
where $f$ defines the $\Delta$-singularity $S_{\bar{0}}$. 
The sheaf $\mathscr{E}$ corresponds to the $B$-module
\begin{equation*}
\mathrm{Ext}^1_B(\Omega^1_B, B) \cong \C[t,x,y,z]/\langle f, \partial f \rangle.
\end{equation*}
As a $C$-module, it is isomorphic to $C[t]^r$. 
This concludes the proof for $\Cc=1$. 
If $\Cc\neq 1$, the same proof goes through if we work with $\Cc$-invariants and recall that $\Bb_h^\Cc=\Bb$. 
\end{proof}

\subsection{Relation between \texorpdfstring{$\mathcal{X}_L(\Delta)$}{ } and \texorpdfstring{$\mathcal{X}_L(\Delta_h)$}{}}
We next compare the two families 
\begin{gather*}
\X=\X_L(\Delta) \to \Bb=\Bb(\cu, \Delta)\\
\X_h=\X_L(\Delta_h)\to \Bb_h=\Bb(\cu, \Delta_h)
\end{gather*}
in case $\Delta\neq \Delta_h$. 
As a first step, we construct a non-trivial $AS(\Delta)$-action on $\X_h$. \\
Let $S_h:=S(\Delta_h)=x+\ker(\mathrm{ad}(y))\subset \gfr_h:=\gfr(\Delta_h)$ be a Slodowy slice. 
Then the group 
\begin{equation}\label{eq:CA}
CA(x,h):=\{ \phi \in \Aut(\gfr_h) ~|~\phi(x)=x, \phi(h)=h \}
\end{equation}
acts on $S_h$. 
There is a subgroup $\CcA\subset CA(x,h)$ which is isomorphic to $AS(\Delta)$ (see \cite{Slo}, 7.5). 
Of course, the definition of $\CcA$ and $\Cc$ makes sense for $S$ and $S_h$ respectively. 
Then $\CcA\cong \Cc$ in the former and $\Cc=1$ in the latter case. 
\begin{rem}
Even though $\Cc\cong AS(\Delta) \cong \CcA$, we often write $\Cc$ and $\CcA$ to emphasize how the $AS(\Delta)$-action is realized. 
To illustrate the relation between $S$ and $S_h$ together with the corresponding $AS(\Delta)$-actions, we give a worked out example in Appendix \ref{a:ex}. 
\end{rem}
The $\CcA$-action on $S_h$ induces a $\CcA$-action on $\tfr_h/W_h$ such that $\sigma_h:S_h\to \tfr_h/W_h$ is equivariant. 
Since the $\CcA$-action commutes with the $\C^*$-action on $S_h$, we obtain a $\CcA$-action on $\X_h$. 
Lemma \ref{l:bhatnu} and Theorem \ref{thm:cy3s} analogously hold for the $\CcA$-action (cf. \cite{Beck-thesis} for details). 
However, $\CcA$ acts non-trivially on the base so that a general member $X_{h,b}$ does not have $\CcA$-trivial canonical bundle.
\begin{cor}\label{cor:comp}
With the previous notation, 
\begin{equation}\label{eq:incl}
\Bb\cong \Bb_h^{\CcA}\hookrightarrow \Bb_h
\end{equation}
but $\Bb^\circ \cap \Bb_h^\circ =\emptyset$. 
The family $\X\to \Bb$ is the restriction of the family $\X_h\to \Bb_h$ under the inclusion (\ref{eq:incl}) and the $AS(\Delta)$-action on $\X$ is induced by the $AS(\Delta)$-action on $\X_h$. 
In particular, $\X_h$ is smooth over a Zariski open and dense subset containing $\Bbo \sqcup \Bbo_h\subset \Bb_h$. 
\end{cor}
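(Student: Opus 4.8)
\noindent The plan is to establish the three assertions in turn, the disjointness $\Bb^\circ\cap\Bb_h^\circ=\emptyset$ being the substantive one.

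\smallskip
\noindent\emph{The identification $(\ref{eq:incl})$ and $\X\cong\X_h|_{\Bb}$.} First I would record the folding input (Appendix \ref{SectFolding}): the linear $\CcA$-action on $\tfr_h/W_h$ has fixed locus $(\tfr_h/W_h)^{\CcA}$ a coordinate subspace whose $\C^*$-weights are exactly the degrees of $\gfr(\Delta)$, so that $(\tfr_h/W_h)^{\CcA}\cong\tfr/W$ as $\C^*$-varieties. Twisting by the spin bundle gives $\Ub_h^{\CcA}\cong\Ub$, and since $\lvert\CcA\rvert$ is invertible, taking $\CcA$-invariants commutes with $H^0(\cu,-)$; hence $\Bb_h^{\CcA}=H^0(\cu,\Ub_h)^{\CcA}=H^0(\cu,\Ub_h^{\CcA})\cong H^0(\cu,\Ub)=\Bb$, the embedding into $\Bb_h$ being the one induced by $\Ub_h^{\CcA}\hookrightarrow\Ub_h$. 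For the families I would use Slodowy's two realizations of the $\C^*$-semi-universal deformation of the $\Delta$-singularity (\cite{Slo}, Section 8; cf.\ \cite{Beck-thesis}): it is given both by $\sigma\colon S(\Delta)\to\tfr/W$, with $\Cc$ trivial on the base, and by the restriction $\sigma_h^{-1}\!\big((\tfr_h/W_h)^{\CcA}\big)\to(\tfr_h/W_h)^{\CcA}$ of the homogeneous slice with its induced $\CcA$-action; by (equivariant) uniqueness these are $\C^*\times AS(\Delta)$-equivariantly isomorphic over $\tfr/W\cong(\tfr_h/W_h)^{\CcA}$. Twisting by the spin bundle and pulling back along $\ev$, and using that a section $b\in\Bb=\Bb_h^{\CcA}$ factors through $\Ub=\Ub_h^{\CcA}\hookrightarrow\Ub_h$, this would give
\[
\X_h|_{\Bb}=(\cu\times\Bb)\times_{\Ub_h}\Scb(\Delta_h)\;\cong\;(\cu\times\Bb)\times_{\Ub}\Scb(\Delta)=\X ,
\]
and, since the equivariant isomorphism of slices carries the $\CcA$-action to the $\Cc$-action, the $AS(\Delta)$-actions match.

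\smallskip
\noindent\emph{Disjointness.} Since $\Bb^\circ\subset\Bb$, it suffices to prove the stronger statement $\Bb\cap\Bb_h^\circ=\emptyset$, and the point is the shape of $\mathrm{disc}(\bm q_h)$ restricted to $\Ub=\Ub_h^{\CcA}$. For $t\in\tfr=\tfr_h^{\CcA}$ and $\alpha\in R_h$ one has $\alpha(t)=(c\alpha)(t)$ for every $c\in\CcA$, so each $\CcA$-orbit $O\subset R_h$ restricts to a single linear form on $\tfr$, the orbits of size $\ge 2$ yielding exactly the short roots of $R=R(\Delta)$. Pulling $\prod_{\alpha\in R_h}\alpha$ back to $\tfr$ therefore produces an expression in which every short root of $R$ occurs with exponent $\ge 4$ and every long root with exponent $2$; by injectivity of $\bm q^*$ it follows that the pullback of (the equation of) $\mathrm{disc}(\bm q_h)$ to $\Ub$ is divisible by the square of the short-root factor $\Psi$ of $\mathrm{disc}(\bm q)$, and $\Psi$ is non-constant, of positive weighted degree, because $\Delta$ has short roots. (For $\Delta=\mathrm{C}_2$ folded from $\Delta_h=\mathrm{A}_3$ one computes $\mathrm{disc}(\bm q_h)|_{\Ub}=\Theta\,\Psi^2$ with $\Theta$, $\Psi$ the long- resp.\ short-root discriminants of $\mathrm{C}_2$.) Hence for any $b\in\Bb$ the pulled-back section $b^*\mathrm{disc}(\bm q_h)$ is divisible by $(b^*\Psi)^2$, where $b^*\Psi\in H^0(\cu,K_\cu^{m})$ with $m>0$; as $\deg K_\cu^m=m(2g-2)>0$ for $g\ge 2$, $b^*\Psi$ has a zero (or vanishes identically, in which case $b(\cu)\subset\mathrm{disc}(\bm q_h)$), so $b^*\mathrm{disc}(\bm q_h)$ has a zero of order $\ge 2$ somewhere. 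In either case $b$ meets $\mathrm{disc}(\bm q_h)$ non-transversally, i.e.\ $b\notin\Bb_h^\circ$.

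\smallskip
\noindent\emph{The smoothness locus, and the main difficulty.} By the $\CcA$-analogue of Theorem \ref{thm:cy3s}, $\X_h\to\Bb_h$ is smooth over the Zariski-open dense $\Bb_h^\circ$; and over $\Bb^\circ\subset\Bb$ the fibres of $\X_h$ coincide with those of $\X$ by the identification above, hence are smooth since $\X\to\Bb$ is smooth over $\Bb^\circ$ (Theorem \ref{thm:cy3s} again). Therefore the locus in $\Bb_h$ over which $\X_h\to\Bb_h$ has smooth fibres --- a Zariski-open set, the non-smooth locus being closed --- contains $\Bb^\circ\sqcup\Bb_h^\circ$ (disjoint by the previous step) and is dense, since it already contains the dense $\Bb_h^\circ$. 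I expect the disjointness step to be the main obstacle: the work there is to turn the folding combinatorics into the geometric statement that $\mathrm{disc}(\bm q_h)$ is non-reduced along $\Ub\subset\Ub_h$ with square part of positive $K_\cu$-degree --- and it is precisely this non-reducedness, a genuine $\CcA\ne 1$ phenomenon, that forces every $\CcA$-invariant section to be non-transversal.
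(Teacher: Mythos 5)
Your argument is correct and, at its core, follows the same route as the paper: the identification $\tfr/W\cong(\tfr_h/W_h)^{\CcA}$ gives (\ref{eq:incl}), Slodowy's restriction theorem (Section 8.8 of \cite{Slo}, which is exactly your ``two realizations of the semi-universal $\C^*$-deformation'') gives $i^*S_h\cong S$ equivariantly and hence $\X\cong\X_h|_{\Bb}$ after twisting by $L$, and both disjointness arguments rest on the same combinatorial fact that a wall $\tfr_\alpha\subset\tfr$ attached to a $\CcA$-orbit of length $\geq 2$ equals $\bigcap_{\beta'\in O(\beta)}\tfr_{h,\beta'}$. Where you differ is in how that fact is converted into failure of transversality: the paper observes that $\tcu_b$ must meet $K_\cu\otimes\tfr_\alpha$, which lies in the \emph{singular locus} of $\mathrm{disc}(\bm q_h)$, whereas you show that $\mathrm{disc}(\bm q_h)|_{\Ub}$ is \emph{non-reduced} along the corresponding component and deduce that $b^*\mathrm{disc}(\bm q_h)$ has a multiple zero; these are two faces of the same phenomenon, and your version has the small merit of proving the stronger statement $\Bb\cap\Bbo_h=\emptyset$. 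Two minor points: your labelling is opposite to the paper's convention --- in Appendix \ref{SectFolding} it is the \emph{long} roots of $\Delta$ that correspond to orbits of length $\geq 2$ (e.g.\ $\mathrm{A}_{2k+1}\to\mathrm{B}_{k+1}$ in Table (\ref{FoldingDynkin})), though this does not affect the multiplicity count; and when passing from multiplicities of $\prod_{\alpha\in R_h}\alpha|_{\tfr}$ on $\tfr$ to multiplicities of the discriminant divisor on $\tfr/W$ you should divide by the ramification index $2$ of $q$ along each wall --- the exponent $\geq 4$ upstairs becomes $\geq 2$ downstairs, which is still what you need.
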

\begin{proof}
It is not difficult to see that $\tfr/W\cong (\tfr_h/W_h)^{\CcA}$ which gives the inclusion $i:\tfr/W \hookrightarrow \tfr_h/W_h$ and hence (\ref{eq:incl}). 
To see that $\Bb^\circ \cap \Bb_h^\circ=\emptyset$, let $\alpha\in \Delta$ be a long root.
Under folding, it corresponds to an $AS(\Delta)$-orbit $O(\beta)$ of length $\geq 2$ for some $\beta\in \Delta_h$. 
If $\tfr_\alpha\subset \tfr$ is the fixed point locus of $s_\alpha\in W$ (and analogously for $\tfr_h$), then 
\begin{equation}\label{eq:talpha}
\tfr_\alpha=\bigcap_{\beta'\in O(\beta)} \tfr_{h,\beta'}\subset \tfr_h
\end{equation}
under the inclusion $\tfr\subset \tfr_h$. 
If $b\in \Bbo$, then $b$ necessarily maps to the smooth locus $\mbox{disc}(\bm{q})^{sm}$ of the discriminant of $\bm{q}:\Ubt\to \Ub$. 
Since $\tcu_b$ necessarily intersects $K_\cu\otimes \tfr_{\alpha}\subset \Ubt$ and because of (\ref{eq:talpha}), $b$ cannot map to $\mbox{disc}(\bm{q}_h)^{sm}$ 
under the inclusion $\Ub\subset \Ub_h$.  
Hence $\Bbo\cap \Bbo_h=\emptyset$. 
\\
Slodowy has proven (Chapter 8.8 in \cite{Slo}) that $i^*S_h\cong S$ over $\tfr/W$ as $\C^*$-deformations of the $\Delta$-singularity over $\bar{0}\in \tfr/W$. 
By the $\C^*$-equivariance, this isomorphism induces the isomorphism $\X\cong \X_h$ over $\Bb\subset \Bb_h$. 
\end{proof}

\subsection{Equations}
To connect to similar constructions in the physical mathematics literature (\cite{DDD}, \cite{Tbranes}, \cite{Tbranes2}), we give explicit equations of the families $\X\to \Bb$ in appropriate bundles over $\cu$. 
\\
If we identify $S=\cong \C^{r+2}$ appropriately, then the weights of the $\C^*$-action are given by $w_1=2d_1,\dots, w_r=2d_r, w_{r+1},w_{r+2},w_{r+3}$ for the degrees $d_j=\mathrm{deg}(\chi_j)$ as in Section \ref{ss:hitchinbase} and weight $w_{r+1},w_{r+2},w_{r+3}$ as in Table \ref{ADEsings} in Appendix \ref{SectFolding}.
It follows that
\begin{equation*}
\Scb=L\times_{\C^*} S \cong \bigoplus_{j=1}^{r} K^{d_j} \oplus \bigoplus_{j=1}^3 L^{w_{r+k}}.
\end{equation*}
Let $f\in \C[x,y,z]$ define the $\Delta$-singularity and let $g_1,\dots, g_r\in \C[x,y,z]^{\Cc}$ be representatives of generators of the $\Cc$-invariant Jacobi ring $\C[x,y,z]^{\Cc}/(f,\partial f)$.
Then the family $\X\to \Bb$ reads as
\begin{equation}\label{eq:explicit}
\X=\{ (x,y,z,\underline{b})~|~f(x,y,z)+\sum_{i=1}^r b_i g_i(x,y,z)=0 \in \mathrm{tot}(K^d) \}
\end{equation}
in $\mathrm{tot}(\bigoplus_{i=1}^3 L^{w_{r+k}})\times \Bb$ with the obvious projection and where $d$ is again as in Table \ref{ADEsings}.

\begin{ex}[$\Delta=\mathrm{B}_2$]\label{ex:B2}
Let us give at least one simple example (also compare with Appendix \ref{a:ex}).
In this case $f(x,y,z)=x^4-yz$ and $\Cc=\Z/2\Z$ acts as $(x,y,z)\mapsto (-x,z,y)$. 
Therefore (\ref{eq:explicit}) reads as 
\begin{equation*}
\X=\{ (x,y,z, (b_1,b_2))~|~x^4-yz+b_1 x^2+ b_2=0 \in \mathrm{tot}(K^4) \}
\end{equation*}
in $\mathrm{tot}(K\oplus K^2\oplus K^2)\times \Bb$ with $\Bb=H^0(\cu, K^2)\oplus H^0(\cu, K^4)$. 
\end{ex}

%

\subsection{Simultaneous resolutions}
The families $\pib:\X\to \Bb$ and $\tilde{\pib}:\tilde{\X}\to \Bb$ share many features of the Slodowy slice $S$. 
It seems unlikely though that our methods provide an explicit description of a simultaneous resolution for these families. 
However, we succeed over the locus $\tilde{\Bb}/W\subset \Bb$ (see (\ref{eq:B/W})) which is in the singular locus of $\pib$.
\\
To see this, let $\tilde{ev}:\cu\times \tilde{\Bb}\to \tilde{\Bb}$ be the evalution map which gives rise to the family 
\begin{equation*}
\begin{tikzcd}
\tilde{ev}^*\tilde{\Scb}=\hat{\X}_L \ar[rr, bend left, "\hat{\pib}_L"] \ar[r, "\hat{\pib}_{1,L}"] & \cu\times \tilde{\Bb} \ar[r, "\hat{\pib}_{2,L}"] & \tilde{\Bb}
\end{tikzcd}
\end{equation*}
of threefolds with a non-trivial $\Cc$-action.
With the methods of the proof of Theorem \ref{thm:cy3s}, one proves that $\hat{\pib}_L:\hat{\X}_L\to \tilde{\Bb}$ is a \emph{smooth} family of quasi-projective threefolds with $\Cc$-trivial canonical class.
\begin{prop}\label{p:simresol}
The family $\hat{\pib}:\hat{\X}\to \tilde{\Bb}$ is a simultaneous $\Cc$-resolution of $p^*\X$ where $p:\tilde{\Bb}\to \tilde{\Bb}/W\subset \Bb$ is the natural projection. 
It descends to a simultaneous \emph{small} $\Cc$-resolution over $\tilde{\Bb}/W-\{0\} \subset \Bb$. 
\end{prop}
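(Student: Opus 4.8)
The plan is to reduce everything to the corresponding statements for the Slodowy slice, which are Grothendieck's simultaneous resolution restricted to $S$ together with its behaviour over the Weyl-invariant locus $\tfr\subset \tfr/W$. First I would spell out the fiber-product description: by construction $\hat{\X}_L$ is the pullback of $\tScb$ along $\cu\times\tilde{\Bb}\to \Ubt$, while $p^*\X$ is the pullback of $\Scb$ along $\cu\times\tilde{\Bb}\to \Ub$ followed by $\bm{q}$, so the commutative square (\ref{eq:tScb}) fiberwise realizes $\bm{\psi}:\tScb\to\Scb$, base-changed to $\cu\times\tilde{\Bb}$ and then projected to $\tilde{\Bb}$, as the morphism $\hat{\X}\to p^*\X$. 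Since $\bm{\psi}$ is $\Cc$-equivariant (all the $\Cc$-actions involved are pulled back from the $\Cc$-action on $\tScb$, $\Scb$, which act trivially on $\Ubt,\Ub$), this map is automatically $\Cc$-equivariant, so it suffices to check the three defining properties of a simultaneous $\Cc$-resolution: that $\hat{\X}$ is smooth, that $\hat{\X}\to p^*\X$ is proper and an isomorphism over a dense open, and that it stays a resolution after base change along $\tilde{\Bb}\to\tilde{\Bb}/W$.

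Smoothness of $\hat{\X}$ is exactly the assertion, already noted in the paragraph preceding the proposition, that $\hat{\pib}:\hat{\X}\to\tilde{\Bb}$ is a smooth family of quasi-projective threefolds with $\Cc$-trivial canonical class; this follows from the argument in the proof of Theorem \ref{thm:cy3s} applied to $\tScb\to\Ubt$ instead of $\Scb\to\Ub$, using that $\tilde\sigma:\tilde S\to\tfr$ is a smooth morphism (it is Grothendieck's simultaneous resolution restricted to the Slodowy slice, cf. the discussion around (\ref{eq:Gsimresol})). Properness of $\hat{\X}\to p^*\X$ reduces fiberwise to properness of $\bm{\psi}:\tScb\to\Scb$, which in turn comes from properness of $\psi:\tilde S\to S$ (a restriction of $\tilde\gfr\cong G\times^B\bfr\to\gfr$, which is proper since $G/B$ is). That it is an isomorphism over a dense open follows because $\psi^{reg}:\tilde S^{reg}\to S^{reg}$ is an isomorphism (as recalled in Section \ref{SloSectRemOnDerivatives}), so $\bm{\psi}$ is an isomorphism over the regular locus, whose complement in each fiber has codimension $\geq 2$ as used repeatedly above. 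For the base-change property I would invoke the defining property of Grothendieck's square (\ref{eq:Gsimresol}): after base change along $q:\tfr\to\tfr/W$, $\psi$ becomes a resolution of the pullback $\tfr\times_{\tfr/W}S$ (Slodowy, Chapter 5 of \cite{Slo}), and this globalizes over $\cu$ since the whole construction is $\C^*$-equivariant and glued by the spin bundle $L$. The $\Cc$-equivariance passes through every step since $\Cc$ acts compatibly and trivially on the bases $\Ubt,\Ub,\tilde{\Bb},\tilde{\Bb}/W$.

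For the second assertion, smallness over $\tilde{\Bb}/W-\{0\}$, I would argue fiberwise and locally on $\cu$, where it becomes the statement that for a reduced but completely reducible cameral curve $\tcu_b$ ($b\in\tilde{\Bb}$, $b\neq 0$) the induced resolution of $Y_b=\pi_b^{-1}$-type threefold contracts only curves, not divisors. Concretely, over a point of $\cu$ where $b$ is nonzero the fiber of $p^*\X$ is (locally) $\tfr/W\times_{\tfr/W}$-pulled-back Slodowy surface, i.e. one lands at a point $\bar t$ with $t\in\tfr$ generic enough that $S_{\bar t}$ has only $\mathrm{A}_1$-singularities (this uses $b\neq 0$, which keeps one away from the deepest stratum); the exceptional locus of $\tilde S_t\to S_{\bar t}$ over such a point is a single $\mathbb{P}^1$ for each node, so in the total threefold the exceptional locus has dimension one less than the discriminant divisor in $\cu\times(\tilde{\Bb}/W-\{0\})$, hence fibers of dimension $\leq 1$ over $p^*\X$. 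I would make the codimension bookkeeping precise using the stratification of $\tfr$ by the subtori $\tfr_\alpha$ exactly as in the codimension argument in the proof of Proposition \ref{p:C*equivariant}: over $\tilde{\Bb}/W-\{0\}$ the relevant locus is a union of the hyperplane-type strata where a single $\mathrm{A}_1$ appears, contributing exceptional fibers of dimension one, and the higher-codimension strata where $\{0\}$ would sit are excluded. The main obstacle I expect is precisely this last point — verifying carefully that removing only the origin from $\tilde{\Bb}/W$ suffices to guarantee smallness, i.e. that along the codimension-one stratum of the discriminant the singularities of $p^*\X$ are generically $\mathrm{A}_1$ and the resolution is a single blow-up of a smooth curve of nodes — and then packaging the fiberwise/local analysis into a global statement over $\cu$ via the $\C^*$-action and the descent along $p$, for which I would again lean on Slodowy's Chapter 5 and the $W$-equivariance in the Corollary to Proposition \ref{p:C*equivariant}.
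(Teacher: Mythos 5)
Your treatment of the first assertion is fine and matches the paper's (very brief) argument: the $\Cc$-equivariant morphism $\hat{\X}\to p^*\X$ comes from the universal property of the fiber product applied to the square (\ref{eq:tScb}), and smoothness of $\hat{\X}\to\tilde{\Bb}$ is exactly the statement recorded in the paragraph preceding the proposition.

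For the smallness claim, however, you have identified the wrong difficulty and, in trying to address it, made a claim that is false as stated. You write that $b\neq 0$ ``keeps one away from the deepest stratum,'' so that the surface fibers one encounters have only $\mathrm{A}_1$-singularities. This is not true: for $b\in\tilde{\Bb}-\{0\}$ each section $\alpha(b)\in H^0(\cu,K_\cu)$ has $2g-2>0$ zeros, and nothing prevents several roots from vanishing at the same point of $\cu$ (or one root from vanishing to higher order), so the fibers $S_{\overline{b(x)}}$ can acquire worse du Val singularities corresponding to sub-root systems. The correct observation --- and the one the paper makes --- is that the du Val \emph{type} is irrelevant for smallness. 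What matters is that for fixed $b$ with $\prod_{\alpha\in R}\alpha(b)\neq 0$ the singular locus of the threefold $X_b$ is contained in the finitely many fibers $p_b^{-1}(x)$, $x\in\mathrm{Div}\bigl(\prod_{\alpha}\alpha(b)\bigr)$, and within each such surface fiber the singular points are isolated; hence $X_b$ has only finitely many singular points, the exceptional fibers of the resolution are (trees of) curves, and a resolution of a threefold whose positive-dimensional fibers lie over a zero-dimensional locus is small by definition. Your codimension bookkeeping in the total space $\cu\times(\tilde{\Bb}/W-\{0\})$ is therefore unnecessary, and the genericity statement it relies on would not survive scrutiny. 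Separately, the map asserted to be a small resolution over $\tilde{\Bb}/W-\{0\}$ is the $W$-descended morphism $\hat{\X}/W\to p^*\X/W\cong\X$, not $\hat{\X}\to p^*\X$ itself; you mention descent only in passing, but it is part of the statement: one first checks that $\hat{\X}/W\to\tilde{\Bb}/W$ remains smooth over $\Bb^*=\tilde{\Bb}/W-\{0\}$ and then applies the fiberwise isolatedness argument to $(\hat{X}/W)_b\to X_b$.
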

\begin{proof}
Using the fiber product property, we obtain a unique  $\Cc$-equivariant morphism $\hat{\X}\to p^*\X$ over $\cu\times \tilde{\Bb}$ which makes the obvious diagrams commute. 
\\
For the second claim, observe that the $\Cc$-family 
$
\hat{\X}/W\to \tilde{\Bb}/W
$
is still smooth over $\Bb^*:=\tilde{\Bb}/W-\{0\}\subset \Bb$.
Then the morphisms $\hat{\X}\to p^*\X$ descends to the $\Cc$-morphism
\begin{equation*}
\hat{\X}/W\to p^*\X/W\cong \X
\end{equation*}
over $\Bb^*$. 
It remains to show that $(\hat{X}/W)_{b}\to X_b$ is a small resolution for $b\in \Bb^*$.
Let $\tilde{b}$ with $p(\tilde{b})=b$. 
Then $X_b$ is only singular at the singularities of the fibers $p_b^{-1}(x)$ for
\begin{equation*}
x\in \mathrm{Div}(\prod_{\alpha\in R} \alpha(b) )
\end{equation*}
and the projection $p_b:X_b\to \cu$. 
By assumption $\prod_{\alpha\in R} \alpha (b)\neq 0$, i.e. the singularities are isolated, so that $(\hat{X}/W)_b\to X_b$ is a small resolution. 
\end{proof}
\begin{rem}
The last statement is false if we include $0\in \Bb$: 
Then $X_0\cong L\times_{\C^*} Y$ for the $\Delta$-singularity $Y$ and $\hat{X}_0\cong L\times_{\C^*} \hat{Y}$ for its minimal resolution $\hat{Y}\to Y$. 
Hence it is not a small resolution. 
\end{rem}
\begin{ex}
The singularities of $X_b$, $b\in \Bb^*$, are precisely the Gorenstein threefold singularities studied in \cite{KatzMorr}. 
The simplest (local) example is 
\begin{equation*}
\begin{tikzcd}
&\hat{X}\ar[d]   =\{ (x,y,z,t,[u:v])\in \C^4\times \mathbb{P}^1~|~x^2+y^2+z^2-t^2=0~,~xv=u(z+t)\}
\\
&X   =\{ (x,y,z,s)\in \C^4~|~x^2+y^2+z^2-s=0 \}
\end{tikzcd}
\end{equation*}
for the small resolution is $(x,y,z,t,[u:v])\mapsto (x,y,z,t^2)$. 
\end{ex}

\section{Calabi-Yau orbifolds and Hitchin systems}\label{s:mhs}
Let $\X\to \Bb$ be a family of quasi-projective Calabi-Yau threefolds with $\Cc$-trivial canonical class as in the previous section.
For each $b\in \Bb$, we denote by $[X_b/\Cc]$ the corresponding quotient stack and refer to it as a Calabi-Yau orbifold.  
By construction, they fit into the family
$
\pib_{\Cc}^\circ:[\X/\Cc]\to \Bb
$
of Calabi-Yau orbifolds. 
Before we study the \emph{integral} (equivariant) cohomology groups
\begin{equation*}
H^3([X_b/\Cc],\Z)=H^3_{\Cc}(X_b,\Z),\quad b\in \Bbo, 
\end{equation*}
we need a general result. 
\subsection{MHS on equivariant cohomology}\label{ss:mhseq}
Let $G$ be a finite group acting on a locally compact topological space $X$.
We replace the orbifold stack $[X/G]$ by the simplicial space
\begin{equation}
[X/G]_\bullet=((G^{p+1}\times X)/G)_{p\geq 0}
\end{equation}
where $G$ acts on $G^{p+1}\times X$ via 
\begin{equation*}
g\cdot (g_0,\dots, g_p, x)=(g_0 g^{-1},\dots, g_p g^{-1}, g\cdot x).
\end{equation*}
The simplicial structure maps of $[X/G]_\bullet$ are induced by the standard simplicial structure on $G^{\bullet+1}$. 
The importance of $[X/G]_\bullet$ is that it is a simplicial model for $X_G:=X \times_G EG$ which defines equivariant cohomology 
\begin{equation*}
H_G^k(X,R)=H^k(X_G,R),\quad R=\Z,\Q.
\end{equation*}
More precisely, we have 
\begin{equation}\label{eq:eqcohorb}
H^k([X/G]_\bullet, R) \cong H^k([X/G],R)= H_G^k(X,R), \quad R=\Z,\Q. 
\end{equation}
%
An important tool to compute the left-hand side is the spectral sequence 
\begin{equation}\label{EqCohSS}
E_1^{pq}=H^q([X/G]_p,R) \Rightarrow E_\infty^k=H^k([X/G]_\bullet, R)=H_G^k(X,R),\quad R=\Z,\Q.
\end{equation}
By \cite{DeligneIII}, this spectral sequence is a spectral sequence of MHS if $X$ is a complex algebraic variety (considered in the analytic topology) on which $G$ acts algebraically.  
In particular, the equivariant cohomology groups $H^k_G(X,R)$ carry natural MHS. \\
The Leray (or Serre) spectral sequence for the fibration $X_G\to BG$ implies that $H_G^k(X,\Q)\cong H^k(X,\Q)^G$ as abelian groups. 
Of course, both sides carry MHS and the next lemma shows that they agree.  
This is well-known to experts\footnote{We kindly acknowledge the help of Donu Arapura via MathOverflow.} but for lack of a reference, we give a proof here for completeness.
\begin{lem}\label{lem:EqCohInv}
Let $G$ be a finite group acting on a complex algebraic variety $X$. 
Then the spectral sequence (\ref{EqCohSS}) degenerates on the $E_2$-page and yields an isomorphism
\begin{equation*}
H_G^k(X,\Q)\cong H^k(X,\Q)^G
\end{equation*}
of $\Q$-MHS for each $k\geq 0$. 
\end{lem}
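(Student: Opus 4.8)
The plan is to compute the spectral sequence (\ref{EqCohSS}) with $R=\Q$ by hand and identify it with the group-cohomology bar complex. Since $G$ is finite and acts freely on $G^{p+1}$ via $g_i\mapsto g_i g^{-1}$, the algebraic map $(g_0,\dots,g_p,x)\mapsto (g_0 g_p^{-1},\dots,g_{p-1}g_p^{-1}, g_p\cdot x)$ identifies $[X/G]_p=(G^{p+1}\times X)/G$ with $G^p\times X$, so that
\[
E_1^{pq}=H^q([X/G]_p,\Q)\cong \mathrm{Map}\bigl(G^p, H^q(X,\Q)\bigr),
\]
and under this identification the simplicial coface maps of $[X/G]_\bullet$ become the usual nonhomogeneous (bar) coface maps. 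Hence $d_1$ is the coboundary of the group-cochain complex with coefficients in the $\Q[G]$-module $H^q(X,\Q)$, where $G$ acts through its action on $X$, and therefore
\[
E_2^{pq}\cong H^p\bigl(G;H^q(X,\Q)\bigr).
\]

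Next I would invoke Maschke's theorem: as $G$ is finite and $H^q(X,\Q)$ is a $\Q$-vector space, the averaging projector shows $H^p(G;H^q(X,\Q))=0$ for $p>0$ while $H^0(G;H^q(X,\Q))=H^q(X,\Q)^G$. Thus only the column $p=0$ of the $E_2$-page survives, all higher differentials $d_r$ ($r\geq 2$) vanish for bidegree reasons, and the spectral sequence degenerates at $E_2$. Consequently the abutment filtration on $H^k_G(X,\Q)=H^k([X/G]_\bullet,\Q)$ (see (\ref{eq:eqcohorb})) collapses to its single nonzero graded piece $E_\infty^{0k}=E_2^{0k}=H^k(X,\Q)^G$, which already gives the isomorphism of $\Q$-vector spaces $H^k_G(X,\Q)\cong H^k(X,\Q)^G$.

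Then I would upgrade this to an isomorphism of $\Q$-MHS. By \cite{DeligneIII}, (\ref{EqCohSS}) is a spectral sequence of $\Q$-MHS: every page is a MHS, each $d_r$ is a morphism of MHS, and the abutment filtration is by sub-MHS strictly compatible with the weight and Hodge filtrations. Since $[X/G]_0\cong X$ algebraically, $E_1^{0k}=H^k(X,\Q)$ carries its Deligne MHS, and $E_2^{0k}=\ker\bigl(d_1\colon E_1^{0k}\to E_1^{1k}\bigr)$ is the induced sub-MHS, i.e. precisely the natural MHS on the invariant subspace $H^k(X,\Q)^G$. As the abutment filtration reduces to the single step $F^0H^k_G(X,\Q)=\mathrm{gr}^0_F=E_\infty^{0k}$, the identification $H^k_G(X,\Q)\cong E_2^{0k}=H^k(X,\Q)^G$ is an isomorphism of $\Q$-MHS, as claimed.

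The only point that genuinely requires care is matching the low-degree part of the $E_1$-term with the bar complex \emph{compatibly} with Deligne's construction of the MHS; but all structure maps of $[X/G]_\bullet$ are induced by the algebraic diagonals, projections, and the multiplication on the finite group $G$, so this compatibility is automatic, and the vanishing of higher group cohomology over $\Q$ together with the degeneration is then purely formal. In this sense there is no serious obstacle: the lemma is a bookkeeping consequence of Deligne's theorem that (\ref{EqCohSS}) lives in the category of $\Q$-MHS.
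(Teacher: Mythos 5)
Your proposal is correct and follows essentially the same route as the paper: identify the $E_1$-page of (\ref{EqCohSS}) with the bar complex computing group cohomology $H^p(G,H^q(X,\Q))$, kill the columns $p\geq 1$ by averaging over the finite group $G$ in characteristic zero, and conclude degeneration at $E_2$ with the MHS identification supplied by Deligne's theorem that the spectral sequence lives in the category of $\Q$-MHS. The only cosmetic difference is that you realize $E_1^{pq}\cong\mathrm{Map}(G^p,H^q(X,\Q))$ via the explicit slice $(G^{p+1}\times X)/G\cong G^p\times X$, whereas the paper arrives at $\Hom_{\Z[G]}(B_\bullet,H^q(X,-))$ through K\"unneth and an adjunction; these are the same computation.
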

\begin{proof}
We express $E_1^{pq}=H^q((G^{p+1}\times X)/G,\Z)$ of (\ref{EqCohSS}) as follows:
\begin{align*}
E_1^{pq}=& H^q(G^{p+1}\times X,\Z)^G & (\mbox{freeness of action}) \\
=& \Hom_{\Z[G]} (\Z, H^q(G^{p+1}\times X,\Z)) & \\ 
=& \Hom_{\Z[G]} (\Z, H^0(G^{p+1},\Z)\otimes_{\Z} H^{q}(X,\Z) ) & (\mbox{K\"unneth formula}) \\ 
=& \Hom_{\Z[G]} (\Z, \Hom_{\Z} (H_0(G^{p+1},\Z), H^q(X,\Z)))) & (\mbox{freeness of }H_0(G^{p+1},\Z)) \\
=& \Hom_{\Z}(\Z\otimes_{\Z[G]} H_0(G^{p+1},\Z), H^q(X,\Z)) & (\mbox{adjunction}) \\
=& \Hom_{\Z[G]}(H_0(G^{p+1},\Z),H^q(X,\Z)).
\end{align*}
In the last line we have used the adjunction between the trivial module functor and $-\otimes_{\Z} \Z[G]$ (\cite{Weibel}). 
Since the complex $H_0(G^{\bullet+1}, \Z)$ coincides with the bar resolution $B_\bullet \to \Z$, we see that 
\begin{equation*}
E_1^{\bullet q}=\Hom_{\Z[G]}(B_\bullet, H^q(X,\Q))
\end{equation*}
as complexes for each $q\geq 0$. 
In particular, $E_1^{\bullet q}$ computes group cohomology $H^k(G, H^q(X,\Z))$ for each $q\geq 0$ so that $E_2^{pq}\cong H^p(G,H^q(X,\Q))$. 
But $G$ is finite and we work over $\Q$ so that $H^p(G,H^q(X,\Q))=0$ for all $p\geq 1$. 
Hence the spectral sequence (\ref{EqCohSS}) of MHS degenerates on the $E_2$-page to give isomorphisms
\begin{equation*}
H^k(X,\Q)^G\cong H^k([X/G]_\bullet,\Q)=H_G^k(X,\Q)
\end{equation*}
of MHS. 
\end{proof}

\subsection{CY orbifolds and Hitchin systems}
We begin with a general result on the Leray spectral sequence for equivariant maps. 
\begin{lem}\label{lem:leraygroup}
Let $G$ be a discrete group acting on topological spaces $X,Y,Z$. 
Further let $f:X\to Y$, $h:Y\to Z$ be $G$-equivariant morphisms. 
Then the Leray spectral sequence 
\begin{equation}\label{LeraySS}
R^pg_* R^qf_*A_X \Rightarrow R^{p+q}(g\circ f)_*A_X
\end{equation}
for any constant sheaf $A_X$ of an abelian group $A$ on $X$ lifts via the forgetful functor $\mathrm{For}:Sh_G(Z)\to Sh(Z)$ to $G$-equivariant abelian sheaves $Sh_G(Z)$ on $Z$. 
\end{lem}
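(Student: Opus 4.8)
The plan is to upgrade the construction of the classical Leray spectral sequence so that every object and every differential is naturally a morphism of $G$-equivariant sheaves, and then observe that applying $\mathrm{For}$ recovers the usual spectral sequence. Concretely, I would work with the Grothendieck spectral sequence for the composite of derived functors $Rg_* \circ Rf_*$, but carried out in the categories of $G$-equivariant sheaves $Sh_G(X)$, $Sh_G(Y)$, $Sh_G(Z)$. The category $Sh_G(W)$ of $G$-equivariant sheaves on a $G$-space $W$ has enough injectives (it is a Grothendieck abelian category, being sheaves of modules over a suitable site, or one can use that it is equivalent to sheaves on the quotient stack $[W/G]$), so all derived functors in sight exist. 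Since $f$ and $h$ are $G$-equivariant, the pushforward functors $f_*$, $h_*$ send equivariant sheaves to equivariant sheaves and are left exact; hence one gets $R^q f_*$, $R^p h_*$ as functors between the equivariant categories, and $R^n(h\circ f)_* = R^n h_* \circ (f_* (-))$ in the derived sense.

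The key steps, in order, are: (1) record that $Sh_G(W)$ has enough injectives for each of $W = X, Y, Z$, and that the forgetful functor $\mathrm{For}: Sh_G(W) \to Sh(W)$ is exact and sends injectives to $\mathrm{For}$-acyclic (in fact, injective) objects — the latter because an equivariant injective resolution, forgotten, computes ordinary sheaf cohomology; (2) check the Grothendieck composition hypothesis in the equivariant setting, namely that $f_*$ sends injective objects of $Sh_G(X)$ to $h_*$-acyclic objects of $Sh_G(Y)$ (this follows because an injective in $Sh_G(X)$ is in particular, after forgetting, flasque, and flasqueness is detected after forgetting the group action, so $f_* I$ is flasque on $Y$ hence $h_*$-acyclic equivariantly and non-equivariantly alike); (3) invoke the Grothendieck spectral sequence to obtain, in $Sh_G(Z)$,
\begin{equation*}
E_2^{pq} = R^p h_* \, R^q f_* \, A_X \Rightarrow R^{p+q}(h\circ f)_* A_X,
\end{equation*}
noting that the constant sheaf $A_X$ carries its tautological trivial $G$-equivariant structure; (4) apply the exact functor $\mathrm{For}: Sh_G(Z) \to Sh(Z)$ and use step (1) together with the fact that $\mathrm{For} \circ R^q f_{*,G} \cong R^q f_{*} \circ \mathrm{For}$ (and likewise for $h$) to identify the image with the ordinary Leray spectral sequence (\ref{LeraySS}); this is precisely the assertion that (\ref{LeraySS}) lifts to $Sh_G(Z)$.

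I expect the main obstacle to be step (2) together with the naturality in step (4): one must be careful that the identification $\mathrm{For} \circ R^q f_{*,G} \cong R^q f_* \circ \mathrm{For}$ holds on the nose and is compatible with the spectral sequence differentials, not merely on each page. The clean way to handle this is to fix a single injective resolution $A_X \to I^\bullet$ in $Sh_G(X)$ and build the whole spectral sequence from the double complex $h_* J^{\bullet\bullet}$, where $f_* I^\bullet \to J^{\bullet\bullet}$ is a Cartan–Eilenberg resolution in $Sh_G(Y)$; applying $\mathrm{For}$ to this single double complex simultaneously produces the equivariant and the ordinary spectral sequences, so all comparisons are automatic. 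A subtlety worth flagging is that $Z$ may have trivial $G$-action in the application (as in the diagram (\ref{eq:tScb})), but nothing in the argument requires this, so the lemma is stated and proved for arbitrary $G$-actions. Finally, for $G$ discrete (as hypothesized) no continuity issues arise in forming $Sh_G(W)$, so the argument is purely formal homological algebra once the enough-injectives and flasqueness bookkeeping is in place.
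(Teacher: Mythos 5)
Your proposal is correct and follows essentially the same route as the paper: both arguments rest on the facts that $Sh_G(W)$ has enough injectives, that the equivariant pushforwards commute with the exact forgetful functor, and that the resulting derived functors (hence the Grothendieck/Leray spectral sequence) are therefore compatible with $\mathrm{For}$. You simply supply the details the paper leaves implicit, notably the verification of the composition hypothesis via flasqueness and the single Cartan--Eilenberg double complex ensuring compatibility of the differentials.
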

\begin{proof}
First of all, the exact forgetful functor $\mathrm{For}:Sh_G(W)\to Sh(W)$ for $W=X,Y,Z$ commutes with the equivariant direct image functor $f^{\mathrm{eq}}_*$, e.g. $\mathrm{For}\circ f^{\mathrm{eq}}_*\simeq f_*\circ \mathrm{For}$. 
Since $Sh_G(W)$ has enough injectives, we can form derived direct image functors, e.g. 
\begin{equation*}
Rf^{\mathrm{eq}}_*:D^b_G(X)\simeq D^b(Sh_G(X))\to D^b_G(Y)\simeq D^b(Sh_G(Y)). 
\end{equation*}
Again they commute with the exact forgetful functors which implies the claim. 
\end{proof}

\begin{prop}\label{p:isoeq}
Let $X=X_b$, $b\in \Bbo$, be a smooth quasi-projective Calabi-Yau threefold with $\Cc$-action as in Section \ref{s:cy}.
Then there is a natural isomorphism 
\begin{equation}
H^3([X/\Cc],\Z)=H^3_{\Cc}(X,\Z)\cong H^3(X,\Z)^{\Cc}
\end{equation}
of $\Z$-MHS. 
\end{prop}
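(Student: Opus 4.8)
The plan is to exploit the spectral sequence of $\Z$-mixed Hodge structures from \S\ref{ss:mhseq}. Applying (\ref{EqCohSS}) to $\Cc$ acting on $X=X_b$ and repeating the computation in the proof of Lemma \ref{lem:EqCohInv}, but now over $\Z$ (only its final step used that we work over $\Q$), one obtains a spectral sequence of $\Z$-MHS $E_2^{pq}=H^p(\Cc,H^q(X,\Z))\Rightarrow H^{p+q}_\Cc(X,\Z)$, with $\Cc$ acting trivially in the group-cohomology direction so that the weights on $E_2^{pq}$ are those of $H^q(X,\Z)$. Its edge morphism $H^3_\Cc(X,\Z)\to H^3(X,\Z)^\Cc$ is a map of $\Z$-MHS which is an isomorphism after $\otimes\Q$ by Lemma \ref{lem:EqCohInv}. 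It therefore suffices to prove that it is an isomorphism of the underlying $\Z$-modules; the agreement of Hodge and weight filtrations is then forced by the rational statement. Equivalently, one must show that the torsion ``error terms'' $E_\infty^{p,3-p}$ with $p\geq 1$ vanish and that the differentials leaving $E_r^{0,3}$ vanish, and for this I would feed in the $\Cc$-equivariant Leray spectral sequence of Lemma \ref{lem:leraygroup} for the $\Cc$-equivariant map $\pi_b\colon X_b\to\cu$.

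The geometric input is a description of $\pi_b\colon X_b\to\cu$ for $b\in\Bbo$. By construction $X_b\to\cu$ is the affine surface bundle obtained by restricting $\bsigma\colon\Scb\to\Ub$ along the section $\cu\to\Ub$ given by $b$, and its fibres are the globalised Milnor fibres of the $\Delta_h$-singularity: smooth away from a finite subset of $\cu$, with a single ordinary double point over each point where the section meets $\mathrm{disc}(\bm{q})$, by the transversality of Proposition \ref{p:transversal}. These surfaces are all simply connected, so $R^0\pi_{b*}\Z=\Z_\cu$, $R^1\pi_{b*}\Z=0$ and $R^{\geq 3}\pi_{b*}\Z=0$, while $\mathcal L:=R^2\pi_{b*}\Z$ is a constructible sheaf, locally constant of rank $\mathrm{rk}\,\gfr(\Delta_h)$ off the branch locus, with monodromy in the Weyl group $W(\Delta_h)$ acting on the root lattice $\Lambda_h$ of $\gfr(\Delta_h)$; moreover the $\Cc$-action covers $\mathrm{id}_\cu$ and acts on $\mathcal L$ fibrewise by the folding automorphisms. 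The Leray spectral sequence of $\pi_b$ (which degenerates in this range for dimension reasons) then gives $H^0(X_b,\Z)=\Z$, $H^1(X_b,\Z)=H^1(\cu,\Z)$, $H^2(X_b,\Z)$ an extension of $H^0(\cu,\mathcal L)$ by $H^2(\cu,\Z)$, and $H^3(X_b,\Z)\cong H^1(\cu,\mathcal L)$, each with its $\Cc$-action and MHS. The statement is thereby reduced to comparing $H^{\leq 3}_\Cc(\cu,-)$ with $H^{\leq 3}(\cu,-)^\Cc$ for the coefficient systems $\Z_\cu$ and $\mathcal L$.

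The main obstacle, as I see it, is precisely the integral bookkeeping in this comparison, and it is here that $b\in\Bbo$ is essential. For such $b$ the monodromy of $\mathcal L$ is large — generically the full $W(\Delta_h)$, and in any case with no nonzero invariant vectors, so that $H^0(\cu,\mathcal L)=0$ — and together with the compatible $\Cc$-action it realises the usual $W(\Delta_h)\rtimes\Cc$-action on $\Lambda_h$. One must check that the group-cohomology classes that would otherwise obstruct the integral statement (e.g.\ those arising from $H^1(\cu,\Z)\otimes H^{>0}(B\Cc,\Z)$ on the equivariant side) are exactly matched, on the invariant side, by torsion in $H^1(\cu,\mathcal L)^\Cc$ which is present only because of the nontrivial Weyl monodromy. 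To pin down the lattices and keep the torsion under control I would pass to the smooth family $\tilde{\X}_L(\Delta)$ of Theorem \ref{thm:cy3s}: as $\tilde S\to\tfr$ is topologically trivial, $\tilde X_b$ is topologically the product $\tcu_b\times\hat{Y}_{\Delta_h}$, whose integral cohomology is torsion-free by the K\"unneth formula and carries a visible $W(\Delta_h)\rtimes\Cc$-action, and both $H^*(X_b,\Z)$ and $H^*_\Cc(X_b,\Z)$ are governed by it via this action and the cameral cover $\tcu_b\to\cu$. That a global argument is unavoidable is clear from the local picture: for the minimal resolution $\hat{Y}_{\Delta_h}$ the sheaf $\mathcal L$ degenerates to the constant sheaf $\Lambda_h$ and $H^1(\Cc,\Lambda_h)\neq 0$ in general (Example \ref{ex:eqcoh}), so the proof must genuinely use that globally the Weyl monodromy spreads $\Lambda_h$ around and annihilates these classes.
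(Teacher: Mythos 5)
Your setup coincides with the paper's: the same simplicial spectral sequence (\ref{EqCohSS}) with $E_2^{pq}=H^p(\Cc,H^q(X,\Z))$, the same reduction of the problem to killing the terms $E_\infty^{p,3-p}$ for $p\geq 1$ and the differentials leaving $E_r^{0,3}$, and the same geometric input, namely the $\Cc$-equivariant Leray spectral sequence of $\pi_b\colon X_b\to\cu$ via Lemma \ref{lem:leraygroup}, expressing $H^0,H^1,H^2$ of $X_b$ through $\pi_{b*}\Z$ and $H^3,H^4$ through $R^2\pi_{b*}\Z$. Up to that point the proposal is on track.

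The gap is that the decisive step is announced but never carried out: your last two paragraphs name ``the integral bookkeeping'' as the main obstacle and then offer two unexecuted strategies, neither of which would close it. The first --- that the classes in $H^p(\Cc,H^{3-p}(X,\Z))$, $p\geq 1$, are ``exactly matched, on the invariant side, by torsion in $H^1(\cu,\mathcal{L})^\Cc$'' --- is not the right mechanism: the natural map of the Proposition is the edge morphism $H^3_\Cc(X,\Z)\to E_\infty^{0,3}\subseteq H^3(X,\Z)^\Cc$, whose kernel is filtered by the $E_\infty^{p,3-p}$ with $p\geq 1$, so these terms must \emph{vanish}; no torsion on the invariant side can compensate for them. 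The second --- passing to $\tilde{\X}_L(\Delta)$ --- is not available in the form you want over $\Bbo$: $\tilde{X}_b$ fibres over the cameral curve $\tcu_b$ rather than over $\cu$, the comparison map $\tilde{X}_b\to X_b$ is generically $|W|$-to-one and hence only controls $H^*(X_b,\Z)$ after inverting $|W|$ (exactly the torsion information you need to retain), and $\tilde{X}_b$ is an $L$-twisted bundle over $\tcu_b$, not literally a product. What the paper does at the corresponding point is to use that the $\Cc$-action covers $\mathrm{id}_\cu$ and is trivial on $\pi_{b*}\Z$ to conclude (via Lemma \ref{lem:leraygroup}) that $\Cc$ acts trivially on $H^q(X_b,\Z)$ for $q\notin\{3,4\}$, and to deduce from this the vanishing (\ref{E3deg}) of $H^p(\Cc,H^q(X,\Z))$ for $p\geq 1$, $q\notin\{3,4\}$, whence the degeneration and the edge isomorphism. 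You record the triviality of the $\Cc$-action on $\pi_{b*}\Z$ and on $\mathcal{L}$'s base direction but never convert it into the required vanishing, so the argument stops exactly where the proof has to be done.
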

We emphasize that we work over the \emph{integers} giving a stronger result as Lemma \ref{lem:EqCohInv}. 
In general, such a result is false due to torsion, cf. Example \ref{ex:eqcoh} below. 
\begin{proof}
As in the proof of Lemma \ref{lem:EqCohInv}, we employ the spectral sequence (\ref{EqCohSS}). 
In that proof we have seen that its $E_2$-page is of the form
\begin{equation*}
E_2^{pq}=H^p(\Cc, H^q(X,\Z)). 
\end{equation*}
We show that it degenerates on the $E_3$-page in this situation by proving  
\begin{equation}\label{E3deg}
H^p(\Cc,H^q(X,\Z))=0,\quad \forall p\geq 1, q\notin \{3,4  \}. 
\end{equation}
Of course this is automatic if we worked over $\Q$. 
To show it over the integers, we observe that
\begin{gather*}
H^0(X,\Z)\cong H^0(\cu, \pi_*\Z),\quad H^1(X,\Z)\cong H^1(\cu,\pi_*\Z), \quad H^2(X,\Z)\cong H^2(\cu,\pi_*\Z), \\
H^3(X,\Z)\cong H^1(\cu, R^2\pi_*\Z), \quad H^4(X,\Z)\cong H^2(\cu, R^2\pi_*\Z), \quad H^q(X,\Z)=0 \quad\forall q\geq 5
\end{gather*}
for the projection $\pi: X\to \cu$. 
This is seen by using the Leray spectral sequence (cf. \cite{DDP}, \cite{Beck}). 
Since $\Cc$ acts trivially on $\pi_*\Z$, it follows that $\Cc$ acts trivially on $H^q(X,\Z)$ for $q\notin \{ 3,4\}$ by Lemma \ref{lem:leraygroup}. 
This yields (\ref{E3deg}) so that the spectral sequence (\ref{EqCohSS}) gives the isomorphism
\begin{equation*}
E_3^{0,3}=H^3(X,\Z)^\Cc \cong E_\infty^3=H^3_{\Cc}(X,\Z)
\end{equation*}
of $\Z$-MHS. 
\end{proof}
\begin{ex}\label{ex:eqcoh}
The analogous statement is \emph{false} for the minimal resolution $\hat{Y}\to Y$ of the $\Delta$-singularity $Y$ (assuming $\Delta\neq \Delta_h$):
A group cohomology computation shows 
\begin{equation*}
H^2_{\Cc}(\hat{Y},\Z)\cong H^2(\hat{Y},\Z)^{\Cc}\oplus \Z/2\Z. 
\end{equation*}
\end{ex}
Since $H^3(X_b,\Z)$, $b\in \Bbo$, is up to a Tate twist a polarizable $\Z$-HS (see \cite{Beck}, Lemma 5) of weight $1$, it follows that the orbifold intermediate Jacobian
\begin{equation}\label{eq:eqj2}
J^2([X_b/\Cc])=H^3([X_b/\Cc],\C)/\left(F^2H^3([X_b/\Cc],\C)+H^3([X_b/\Cc],\Z)\right)
\end{equation}
of the Calabi-Yau orbifold $[X_b/\Cc]$ is an abelian variety. 
\\
To globalize the previous discussion, we consider the augmented simplicial morphism associated to the family $\pib_{\Cc}:[\X/\Cc]\to \Bb$ of Calabi-Yau orbifolds. 
By abuse of notation, it is again denoted by
\begin{equation*}
\begin{tikzcd}
\pib_{\Cc}:[\X/\Cc]_\bullet \ar[r, "\pib_{\Cc,\bullet}"] & \Bb_\bullet \ar[r, "a"] & \Bb.
\end{tikzcd}
\end{equation*}
where the last arrow is the augmentation. 
In accordance with (\ref{eq:eqcohorb}), we set
\begin{equation*}
R^k\pib_{\Cc,*} \Z= \bm{s} R^k(\pib_{\Cc,\bullet})_*\Z,
\end{equation*}
compare \cite{DeligneIII}, Section 5.2, where $\bm{s}$ stands for the total complex. 

\begin{prop} 
Let $\pib_{\Cc}^\circ:[\X^\circ/\Cc] \to \Bbo$ be the family of orbifolds associated with the smooth family $\pib:\X^\circ\to \Bbo$. 
Then $R^3\pib_{\Cc,*}^\circ \Z$ carries the structure of a polarizable $\Z$-VMHS such that 
\begin{equation*}
(R^3\pib_{\Cc,*}^\circ \Z)_b\cong H^3_{\Cc}(X_b,\Z), \quad b\in \Bbo,
\end{equation*}
naturally as $Z$-MHS. 
\end{prop}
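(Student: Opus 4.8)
The plan is to run the arguments of Lemma \ref{lem:EqCohInv} and Proposition \ref{p:isoeq} relatively over $\Bbo$, importing all Hodge theory from the already-understood non-equivariant variation $R^3\pib_*^\circ\Z$ of the smooth family $\pib^\circ:\X^\circ\to\Bbo$. First I would set up the relative analogue of the spectral sequence (\ref{EqCohSS}): applying $R^\bullet(-)_*\Z$ to the augmented simplicial morphism $\pib_\Cc^\circ:[\X^\circ/\Cc]_\bullet\to\Bbo$ and filtering by simplicial degree yields a spectral sequence of sheaves on $\Bbo$
\begin{equation*}
E_1^{p,q}=R^q\big((\pib_{\Cc,\bullet}^\circ)_p\big)_*\Z\ \Longrightarrow\ R^{p+q}\pib_{\Cc,*}^\circ\Z=\bm{s}R^{p+q}(\pib_{\Cc,\bullet}^\circ)_*\Z.
\end{equation*}
The computation in the proof of Lemma \ref{lem:EqCohInv} — freeness of the $\Cc$-action on each $\Cc^{p+1}\times\X^\circ$, the K\"unneth formula, and the identification of $H_0(\Cc^{\bullet+1},\Z)$ with the bar resolution — goes through verbatim in this relative setting and identifies the $E_2$-page with the group-cohomology sheaves $E_2^{p,q}\cong\mathscr{H}^p\!\big(\Cc,R^q\pib_*^\circ\Z\big)$ of the $\Cc$-equivariant local system $R^q\pib_*^\circ\Z$ on $\Bbo$.

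Next I would prove degeneration in the range relevant to total degree $3$. The Leray sheaf $R^q\pib_*^\circ\Z$ on $\Bbo$ is a $\Cc$-equivariant local system with \emph{trivial} $\Cc$-action for $q\in\{0,1,2\}$ and it vanishes for $q\geq5$, exactly as in the proof of Proposition \ref{p:isoeq} (apply Lemma \ref{lem:leraygroup} to the $\Cc$-equivariant tower $\X^\circ\to\cu\times\Bbo\to\Bbo$). Hence $\mathscr{H}^p(\Cc,R^q\pib_*^\circ\Z)=0$ for all $p\geq1$ and $q\in\{0,1,2\}$, so the only term of total degree $3$ surviving to $E_\infty$ is $E_\infty^{0,3}=(R^3\pib_*^\circ\Z)^\Cc$, and the edge morphism gives a canonical isomorphism of $\Z$-local systems
\begin{equation*}
R^3\pib_{\Cc,*}^\circ\Z\ \xrightarrow{\ \sim\ }\ (R^3\pib_*^\circ\Z)^\Cc .
\end{equation*}
Taking the stalk at $b\in\Bbo$ recovers the isomorphism $H^3_\Cc(X_b,\Z)\cong H^3(X_b,\Z)^\Cc$ of Proposition \ref{p:isoeq}; in particular $(R^3\pib_{\Cc,*}^\circ\Z)_b\cong H^3_\Cc(X_b,\Z)$ naturally.

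It remains to install the Hodge theory. By \cite{Beck}, Lemma 5, $R^3\pib_*^\circ\Z$ underlies a polarizable $\Z$-VHS (pure of weight $1$ after a Tate twist), on which $\Cc$ acts by automorphisms of VHS because it acts algebraically on $\X^\circ$ over $\Bbo$. Since $\Cc$ is finite and the fibers are torsion-free, $(R^3\pib_*^\circ\Z)^\Cc$ is a \emph{saturated} sub-local system — if $n\cdot m$ is $\Cc$-invariant for some $n\neq0$ then $n(gm-m)=0$ forces $gm=m$ — hence underlies a sub-$\Z$-VHS, polarizable by restriction of the polarization, and in particular a polarizable $\Z$-VMHS. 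Transporting this structure along the isomorphism above endows $R^3\pib_{\Cc,*}^\circ\Z$ with such a structure; and since fibers of a sub-VHS are sub-HS, its fiber at $b$ is $H^3(X_b,\Z)^\Cc$ with its natural $\Z$-MHS, which equals $H^3_\Cc(X_b,\Z)$ as a $\Z$-MHS by Proposition \ref{p:isoeq}. This proves the proposition.

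The step I expect to be the main obstacle is the first one: making precise that the relative spectral sequence and the $E_2$-identification hold at the level of sheaves on $\Bbo$ and are compatible with Deligne's construction of the MHS on each fiber, rather than only fiber by fiber. The clean way to circumvent having to carry a variation-of-MHS structure through the spectral sequence is the route taken above: once the underlying local system is identified with the sub-VHS $(R^3\pib_*^\circ\Z)^\Cc$ and the fiberwise comparison of mixed Hodge structures is supplied by Proposition \ref{p:isoeq}, all Hodge-theoretic content is imported from the non-equivariant variation, and what remains is only the routine verification that the global identification restricts to the fiberwise one.
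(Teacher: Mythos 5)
Your argument is essentially correct, but it takes a genuinely different route from the paper. The paper's own proof is a two-line citation: it appeals to the general theory of VMHS for simplicial smooth quasi-projective morphisms developed in \cite{BeckVMHS} (alternatively, to an approximation of $B\Cc$ by smooth projective varieties), which puts the mixed Hodge structure directly on the equivariant object $R^3\pib^\circ_{\Cc,*}\Z$ without ever comparing it to the non-equivariant variation. You instead run the spectral sequence (\ref{EqCohSS}) relatively over $\Bbo$, identify $R^3\pib^\circ_{\Cc,*}\Z$ with the saturated sub-local system $(R^3\pib^\circ_*\Z)^{\Cc}$, and transport the Hodge structure from the known polarizable $\Z$-VHS of \cite{Beck}. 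What your route buys is self-containedness: everything is reduced to results already proved in the paper (Proposition \ref{p:isoeq} and Lemma 5 of \cite{Beck}), and the polarization comes for free by restriction. What the paper's route buys is canonicity and robustness: the simplicial construction does not need the degeneration at $E^{0,3}$, the torsion-freeness of $H^3(X_b,\Z)$ used for your saturation argument, or the comparison with invariants at all, and it would still produce a VMHS in situations where $H^3_{\Cc}\neq (H^3)^{\Cc}$ integrally (as in Example \ref{ex:eqcoh}). Two small points to tighten: (i) the intermediate claim $\mathscr{H}^p(\Cc,R^q\pib^\circ_*\Z)=0$ for $p\geq 1$, $q\leq 2$ does not follow from triviality of the $\Cc$-action alone (trivial modules have nonzero higher group cohomology); it is cleaner to bypass it by observing that the edge morphism $R^3\pib^\circ_{\Cc,*}\Z\to (R^3\pib^\circ_*\Z)^{\Cc}$ is a morphism of local systems that is an isomorphism on each stalk by Proposition \ref{p:isoeq}, hence an isomorphism; (ii) for this you should note explicitly that each level of the simplicial family is topologically locally trivial over $\Bbo$, so that all the sheaves involved are indeed local systems — this is exactly the hypothesis the paper flags when invoking \cite{BeckVMHS}.
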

\begin{proof} 
This is a consequence of the more general treatment in \cite{BeckVMHS} on VMHS for simplicial smooth and quasi-projective morphisms (which are topologically locally trivial on each level). 
Alternatively, the V(M)HS-structure can be constructed directly in this present case by using an approximation of the classifying space $B\Cc$ of $\Cc$ by finite-dimensional smooth projective varieties. 
\end{proof}
Therefore the orbifold intermediate Jacobians (\ref{eq:eqj2}) fit into the family 
\begin{equation*}
\mathcal{J}^2([\X^\circ/\Cc])\to \Bbo
\end{equation*}
of abelian varieties. 
Using the methods of \cite{Beck}, it is seen to be an algebraic integrable system. 
\begin{thm}
Let $\Delta$ be an irreducible Dynkin diagram, $\Cc=AS(\Delta)$ and $G=G(\Delta)$ the corresponding simple adjoint complex Lie group. 
Further let $\X\to \Bb(\cu, \Delta)$ be one of the families of Calabi-Yau threefolds of Section \ref{s:cy} with $\Cc$-action. 
Then 
\begin{equation*}
\mathcal{J}^2([\Xo/\Cc])  \cong \Hig_1^\circ(\cu, G)
\end{equation*}
\begin{equation*}
\mathcal{J}_2([\Xo/\Cc]) \cong \Hig_1^\circ(\cu, ^L G).
\end{equation*}
as algebraic integrable systems over $\Bbo(\cu, \Delta)$. 
\end{thm}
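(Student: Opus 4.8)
The plan is to combine the two main ingredients already assembled in the paper: the isomorphism of $\Z$-VMHS $R^3\pib^\circ_{\Cc,*}\Z \cong (R^3\pib^\circ_{*}\Z)^{\Cc}$ coming from Proposition~\ref{p:isoeq} (globalized in the previous proposition), and the identification of the $\Cc$-invariant intermediate Jacobian fibration with the Hitchin system from \cite{Beck}, namely the isomorphism \eqref{eq:J2C}. First I would recall that for each $b\in\Bbo$ the $\Z$-Hodge structure $H^3(X_b,\Z)$ is, up to Tate twist, polarizable of weight $1$, so that taking $\Cc$-invariants commutes with the formation of the intermediate Jacobian: one has $J^2([X_b/\Cc]) = J^2(X_b)^{\Cc}$ as complex tori, and in fact as abelian varieties because the polarization restricts to the invariant part. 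Globalizing over $\Bbo$, this says $\mathcal{J}^2([\Xo/\Cc]) \cong \mathcal{J}^2_{\Cc}(\Xo)$ as families of abelian varieties over $\Bbo$, where the right-hand side is the fibration defined via $\Cc$-invariants in cohomology that appears in \eqref{eq:J2C}.

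Next I would invoke Corollary~5 of \cite{Beck}, i.e. the isomorphism \eqref{eq:J2C}, to conclude $\mathcal{J}^2_{\Cc}(\Xo)\cong \mathcal{M}_{\mathrm H}(\cu, G_{ad}(\Delta))$ over $\Bbo$. Composing the two isomorphisms yields $\mathcal{J}^2([\Xo/\Cc])\cong \Hig^\circ_1(\cu,G)$ for $G=G_{ad}(\Delta)$, as families of complex tori (indeed abelian varieties) over $\Bbo$. It then remains to upgrade this to an isomorphism \emph{of algebraic integrable systems}, i.e. to check compatibility of the holomorphic symplectic structures on the total spaces. Here I would use the abstract Seiberg--Witten differential point of view of \cite{Beck}, Section~4.4: the $\Cc$-invariant volume forms $s_b\in H^0(X_b,K_{X_b})$ constructed in Lemma~\ref{l:bhatnu} and Theorem~\ref{thm:cy3s} induce a period map on $\Bbo$ which is an abstract Seiberg--Witten differential, and this differential determines the integrable-system structure on $\mathcal{J}^2_{\Cc}(\Xo/\Bbo)$; since the same differential (via \eqref{eq:J2C}) is the one governing the symplectic form on the Hitchin system, the isomorphism automatically respects the integrable-system structures.

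For the second statement, I would run the identical argument with equivariant cohomology replaced by equivariant \emph{compactly supported} cohomology $H^3_{\Cc,c}(X_b,\Z)$. The analogue of Proposition~\ref{p:isoeq} gives $H^3_{\Cc,c}(X_b,\Z)\cong H^3_c(X_b,\Z)^{\Cc}$ (the same Leray-type degeneration argument applies, since $\Cc$ again acts trivially on $R^q(\pi_X)_!\Z$ for $q\notin\{3,4\}$), and Poincaré--Lefschetz duality on the smooth threefold $X_b$ exchanges $H^3_c$ and $H^3$ up to a Tate twist, turning the invariants into coinvariants; the resulting dual abelian variety is, by the self-duality properties worked out in \cite{Beck}, the Hitchin system for the Langlands dual group $^LG$. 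Defining $\mathcal{J}_2([\Xo/\Cc])\to\Bbo$ via these compactly supported equivariant cohomologies and arguing as before gives $\mathcal{J}_2([\Xo/\Cc])\cong \Hig^\circ_1(\cu,{}^LG)$ as algebraic integrable systems.

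\textbf{Main obstacle.} The delicate point is not the cohomology bookkeeping but the passage from an isomorphism of families of abelian varieties to an isomorphism of algebraic integrable systems: one must know that the holomorphic symplectic form on the total space of $\mathcal{J}^2([\Xo/\Cc])$ is pinned down by the Seiberg--Witten/period data, and that this data matches on both sides. I expect this to be handled entirely by citing the framework of \cite{Beck} (in particular that the period map induced by the $s_b$ is an abstract Seiberg--Witten differential, together with the uniqueness of the integrable-system structure it determines), rather than by any new computation here; the $\Cc$-invariance of $s_b$ established in Theorem~\ref{thm:cy3s} is exactly what makes the orbifold version go through.
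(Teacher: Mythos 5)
Your proposal is correct and follows essentially the same route as the paper: globalize the integral equivariant-vs-invariant cohomology comparison of Proposition~\ref{p:isoeq} to an isomorphism $R^3\pib_{\Cc,*}^\circ\Z\cong(R^3\pib_*^\circ\Z)^{\Cc}$ of polarizable $\Z$-V(M)HS, deduce $\mathcal{J}^2([\Xo/\Cc])\cong\mathcal{J}^2_{\Cc}(\Xo)$, and then cite \cite{Beck} (the paper uses Theorem~6 there, which packages (\ref{eq:J2C}) together with the Seiberg--Witten/integrable-system compatibility you describe) for the identification with $\Hig_1^\circ(\cu,G)$; the Langlands dual case is likewise obtained by passing to $\pib_{\Cc,!}$ and the dual result (Theorem~8 of \cite{Beck}). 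The only cosmetic caveat is that $J^2([X_b/\Cc])$ should be read as the torus built from the $\Cc$-invariant part of cohomology rather than as the fixed-point set of $\Cc$ acting on $J^2(X_b)$, which is what you clearly intend.
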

\begin{proof}
The spectral sequence (\ref{EqCohSS}) globalizes to a spectral sequence
of polarizable $\Z$-VMHS. 
Hence Proposition \ref{p:isoeq} implies  
\begin{equation*}
R^3\pib_{\Cc,*}^\circ \Z \cong (R^3\pib_*\Z)^\Cc
\end{equation*}
as polarizable $\Z$-VHS of weight $1$ (up to a Tate twist). 
In particular, 
\begin{equation*}
\mathcal{J}^2([\Xo/\Cc]) \cong \mathcal{J}^2_{\Cc}(\Xo)
\end{equation*}
over $\Bbo$ where the right-hand side is the intermediate Jacobian fibration defined by $\Cc$-invariants in cohomology. 
Now the claim follows from \cite{Beck}, Theorem 6. 
\\
Replacing $\pib_{\Cc,*}$ with $\pib_{\Cc,!}$ and using Theorem 8 of \cite{Beck} gives the second isomorphism. 
\end{proof}

%

\appendix

\section{Folding}\label{SectFolding}
Let $\Delta$ be an irreducible Dynkin diagram. 
We follow \cite{Slo} and define the associated symmetry group of $\Delta$ via
\begin{equation}\label{ASDelta}
AS(\Delta):=
\begin{cases}
1, & \Delta\mbox{ is of type }\ADE, \\
\Z/2\Z, & \Delta\mbox{ is of type }\mathrm{B}_{k}, \mathrm{C}_{k}, \mathrm{F}_4, \\
S_3, & \Delta\mbox{ is of type }\mathrm{G}_2,
\end{cases}
\end{equation}
for $k\geq 2$. 
There is a unique irreducible $\ADE$-Dynkin diagram $\Delta_h$ such that $AS=AS(\Delta)\subset \Aut(\Delta_h)$ and $\Delta=\Delta_h^{AS}$.
Here $\Delta_h^{AS}$ stands for the Dynkin diagram which is obtained by taking $AS(\Delta)$-invariants $R_h^{AS}$ in the root space $R_h=R(\Delta_h)$ associated with $\Delta_h$ (cf. \cite{Beck-thesis}, Chapter 1.2, or \cite{Springer})
\begin{rem}\label{r:dynkin}
To obtain a reasonable notion of folding on the level of root spaces, it is important that we only work with Dynkin graph automorphisms $\Aut_D(\Delta_h)\subset \Aut(\Delta_h)$. 
These are graph automorphisms $a\in \Aut(\Delta_h)$ such that $a(v)$ and $v$ are not direct neighbors for each vertex $v\in \Delta_h$. 
If $\Delta_h\neq \mathrm{A}_{2n}$, then $\Aut_D(\Delta_h)=\Aut(\Delta_h)$ and $\Aut_D(\Delta_h)=1$ if $\Delta_h=\mathrm{A}_{2n}$.  
\end{rem}
Restricted to Dynkin diagrams of type $\mathrm{B}_k, \mathrm{C}_k, \mathrm{F}_4, \mathrm{G}_2$ (\emph{$\BCFG$-Dynkin diagrams for short}), we obtain a bijection
\begin{equation}\label{FoldingBijection}
\begin{aligned}
\{ \Delta ~\text{ of type }\BCFG\} &\to \{ (\Delta_h,\Cc)~|~\Delta_h~\ADE,~1\neq \Cc\subset \Aut_D(\Delta_h)\}\\
\Delta&\mapsto (\Delta_h,AS(\Delta)) \\
\Delta=\Delta^{\Cc}_h&\mapsfrom (\Delta_h, \Cc).
\end{aligned}
\end{equation}
We say that $\Delta=\Delta_{h}^\Cc$ is obtained from $(\Delta_h,\Cc)$ (or simply $\Delta_h$) by \emph{folding}. 
For convenience we summarize the corresponding types in the following table 
\begin{equation}\label{FoldingDynkin}
\begin{array}{c|c|c} 
\Delta & \Delta_h & AS(\Delta)  \\ \hline
\mathrm{B_{k+1}} & \mathrm{A}_{2k+1} & \Z/2\Z   \\ 
\mathrm{C}_{k} & \mathrm{D}_{k+1} & \Z/2\Z  \\ 
\mathrm{F}_4 & \mathrm{E}_6 & \Z/2\Z  \\ 
\mathrm{G}_2 & \mathrm{D}_4 & S_3  \\ 
\end{array}
\end{equation}
Finally, we collect the weights of the $\ADE$-singularities that are naturally induced by their Lie-theoretic realization. 
\begin{center}
\begin{tabular}{r l  | c} 
 & Dynkin type of $\Gamma$ and equation & $(w_{r+1},w_{r+2},w_{r+3}; d)$ \\ \hline
$\mathrm{A}_k$: & $x^{k+1}-yz=0$ & $(2,k+1,k+1; 2(k+1))$    \\
$\mathrm{D}_k$: & $x(x^{k-2}-y^2)-z^2=0$ & $(2,k-2,k-1;2k-2)$ \\
$\mathrm{E}_6$: & $x^4+y^3+z^2=0$ & $(6,8,12;24)$   \\
$\mathrm{E}_7$: & $x^3y+y^3+z^2=0$ & $(8,12,18;36)$   \\
$\mathrm{E}_8$: & $x^5+y^3+z^2=0$ & $(12,20,30;60)$
\end{tabular}
\captionof{table}{\footnotesize{$\ADE$-singularities together with their natural weights}}
\label{ADEsings}
\end{center}
Here $r$ is the rank of the corresponding simple complex Lie algebra. 
Note that by definition, this table contains the natural weights of all $\Delta$-singularities. 

\section{Slodowy slices for $\mathrm{B}_2$-singularities}\label{a:ex}
Let $\Delta=\mathrm{B}_2$ and\footnote{Here we use the notation $\Delta_0$ instead of $\Delta_h$ in the main text for notational reasons.} $(\Delta_0,\Cc)=(\mathrm{A}_3,\Z/2\Z)$ be the associated pair. 
Further we let 
 	\begin{equation*}
	\gfr_0=\gfr_(\Delta_0)=\mathfrak{sl}(4,\C), \quad \gfr=\gfr(\Delta)=\mathfrak{so}(5,\C)
	\end{equation*}
be the corresponding simple complex Lie algebras. 
We give two Slodowy slices $S_0\subset \gfr_0$ and $S\subset \gfr$ together with the $\CcA$- and $\Cc$-action respectively (see (\ref{eq:Cc} and (\ref{eq:CA}) and directly compute that they both realize a semi-universal deformation of the $\mathrm{B}_2$-singularity. 

	\subsection{Extrinsic case ($\gfr_0$)}
	We consider the following $\mathfrak{sl}_2$-triple in $\gfr_0$:
	\begin{equation}
	x_0=
	\begin{pmatrix}
	0 & 0 & 0 & 0 \\
	0 & 0 & 1 & 0 \\
	0 & 0 & 0 & 1 \\
	0 & 0 & 0 & 0 \\
	\end{pmatrix},\quad  
	y_0=
	\begin{pmatrix}
	0 & 0 & 0 & 0 \\
	0 & 0 & 0 & 0 \\
	0 & 1 & 0 & 0 \\
	0 & 0 & 1 & 0 \\
	\end{pmatrix},\quad 
	h_0=
	\begin{pmatrix}
	0 & 0 & 0 & 0 \\
	0 & 1 & 0 & 0 \\
	0 & 0 & 0 & 0 \\
	0 & 0 & 0 & -1 \\
	\end{pmatrix}.
	\end{equation}
	The Slodowy slice with respect to $(x_0,y_0,h_0)$ is given by 
	\begin{equation}\label{A3S0}
	S_0=x_0+\ker \mathrm{ad}(y_0)=\left\{
	\begin{pmatrix}
	-3a & b & 0 & 0 \\
	0 & a & 1 & 0 \\
	0 & c & a & 1 \\
	d & e & c & a 
	\end{pmatrix}~:~a,b,c,d,e\in \C \right\}
	\end{equation}
	Let $\phi\in \mathrm{Aut}(\gfr_0)$ be given by $\phi(A)=-A^t$ 
	and
	\begin{equation}
	g_0=
	\begin{pmatrix}
	1 & 0 & 0 & 0 \\
	0 & 0 & 0 & -1 \\
	0 & 0 & 1 & 0 \\
	0 & -1 & 0 & 0 
	\end{pmatrix}
	\end{equation}
	(actually the class in $G_0=PGL(4,\C)$). 
	Then it is immediate to check that 
	\[
	\tau:=\mathrm{Ad}(g_0)\circ \phi\in CA(x,h)
	\] 
	and $\tau^2=1$.
	It generates the subgroup $\mathbf{CA}\subset CA(x,h)$ which is isomorphic to $\Z/2\Z$. 
	Its action on $S_0$ is given by
	\begin{equation}\label{CAaction}
	\tau\cdot \begin{pmatrix}
	-3a & b & 0 & 0 \\
	0 & a & 1 & 0 \\
	0 & c & a & 1 \\
	d & e & c & a 
	\end{pmatrix}
	=
	\begin{pmatrix}
	3a & d & 0 & 0 \\
	0 & -a & 1 & 0 \\
	0 & c & -a & 1 \\
	b & -e & c & -a
	\end{pmatrix}.
	\end{equation}
	To determine its action on the base, we compute $\sigma_0:S_0\to \tfr_0/W_0$. 
	We take as homogeneous generators the coefficients $\chi_2,\chi_3,\chi_4$ of degree $2,3,4$ of the characteristic polynomial. 
	If $B\in S_0$ is as in (\ref{A3S0}), then 
	\begin{equation}
	\sigma_0(B)=(\chi_2(B),\chi_3(B),\chi_4(B))
	= (-6a^2-2c,8a^3-4ac-e,-3a^4+6a^2c-bd-3ae).
	\end{equation}
	It can be readily checked that $\sigma^{-1}(0,0,0)$ is an $\mathrm{A}_3$-singularity given by 
	\begin{equation}\label{A3inS0}
	c=-3a^2,\quad e=20a^3,\quad -81a^4-bd=0. 
	\end{equation}
	in the notation of (\ref{A3S0}).
	Moreover, we have 
	\begin{equation}
	\sigma_0(\tau\cdot B)=(\chi_2(B),-\chi_3(B),\chi_4(B)). 
	\end{equation}
	Hence if we restrict to the subspace 
	\begin{equation*}
	\begin{tikzcd}
	(\tfr_0/W_0)^{\mathbf{CA}}=\{(s,0,t)\in \C^3\cong \tfr_0/W_0~|~s,t\in \C\}\ar[r, hook, "i"] & \tfr_0/W_0,
	\end{tikzcd}
	\end{equation*}	
	 then $\mathbf{CA}$ preserves the fibers of $\sigma_0$. 
	Since $i^*S_0\subset S_0$ is defined by $e=4ac-8a^3$, we obtain
	\begin{equation}
	\sigma_0(B)=(-6a^2-2c,0,-27a^4+18a^2c-bd),\quad B\in i^*S_0.
	\end{equation}
	Finally, we see that $\mathbf{CA}$ acts on the $\mathrm{A}_3$-singularity $\sigma_0^{-1}(0,0,0)$ as described in Example \ref{ex:B2} by setting $x=-3a,y=b,z=d$ in the notation of (\ref{A3inS0}). 
	
	\subsection{Intrinsic case ($\gfr$)}
	For this example, we use the conventions and methods, e.g. the classification of nilpotent orbits in terms of weighted Dynkin diagrams, of \cite{CollingwoodMcGovern}, Chapter 5. 
	First of all, let
	\begin{equation}
	\gfr=\mathfrak{so}(5,\C)\cong
	\left\{
	\begin{pmatrix}
	0 & u_1 & u_2 & v_1 & v_2 \\
	-v_1 & a_1 & a_2 & 0 & b \\
	-v_2 & a_3 & a_4 & -b & 0 \\
	-u_1 & 0 & c & -a_1 & -a_3 \\
	-u_2 & -c & 0 & -a_2 & -a_4
	\end{pmatrix}~:~ 
	u_i,v_j,a_k,b,c\in \C
	\right\}
	\end{equation}
	This form of $\mathfrak{so}(5,\C)$ has the advantage that diagonal matrices give a Cartan $\tfr\subset \gfr$.
	Then we consider the following $\mathfrak{sl}_2$-triplet $(x,y,h)$ where $x\in \gfr$ is a subregular nilpotent element:
	\begin{equation}
	x=\begin{pmatrix}
	0 & 0 & 0 & 1 & 0 \\
	-1 & 0 & 1 & 0 & 0 \\
	0 & 0 & 0 & 0 & 0 \\
	0 & 0 & 0 & 0 & 0 \\
	0 & 0 & 0 & -1 & 0
	\end{pmatrix},\quad
	y=\begin{pmatrix}
	0 & -2 & 0 & 0 & 0 \\
	0 & 0 & 0 & 0 & 0 \\
	0 & 0 & 0 & 0 & 0 \\
	2 & 0 & -2 & 0 & 0 \\
	0 & 2 & 0 & 0 & 0
	\end{pmatrix},\quad
	h=\begin{pmatrix}
	0 & 0 & 0 & 0 & 0 \\ 
	0 & 2 & 0 & 0 & 0 \\
	0 & 0 & 0 & 0 & 0 \\
	0 & 0 & 0 & -2 & 0 \\
	0 & 0 & 0 & 0 & 0 \\
	\end{pmatrix}.
	\end{equation}
	The corresponding Slodowy slice $S=x+\ker \mathrm{ad}~y\subset \gfr$ is given by 
	\begin{equation}\label{SB2}
	S=
	\left\{C=
	\begin{pmatrix}
	0 & - a & -b & 1 & 0 \\
	-1 & 0 & 1 & 0 & 0 \\
	0 & c & -b & 0 & 0 \\
	a & 0 & d & 0 & -c \\
	b & -d & 0 & -1 & b
	\end{pmatrix}~:~a,b,c,d\in \C
	\right\}
	\end{equation}
	Again using the coefficients $\chi_2,\chi_4$ of the characteristic polynomial of degree $2$ and $4$ respectively, the adjoint quotient restricted to $S$ is given by 
	\begin{equation}
	\sigma(C)=(\chi_2(C),\chi_4(C))=(-b^2-2a-2c,2ab^2+2b^2c+2ac+c^2+2cd).
	\end{equation}
	In the notation of (\ref{SB2}), we see that $\sigma^{-1}(0,0,0)$ is given by 
	\begin{equation}\label{B2eq}
	a=-c-\frac{b^2}{2},\quad b^4+c(b^2-2d+c)=0
	\end{equation}
	which is immediately seen to be an $\mathrm{A}_3$-singularity. 
	\\  
	Using the fact that $Z_G(h)$ is the subgroup in $G\cong PSO(5,\C)$ generated by the diagonal maximal torus $T$ and the unipotent subgroup $U_{\alpha_2}$ corresponding to the short root $\alpha_2\in \Delta$, we compute
	\begin{equation}
	C(x,h)=Z_G(x)\cap Z_G(h)=
	\left\{
	D^k=
	\begin{pmatrix}
	1 & 0 & 0 & 0 & 2 \\
	0 & -1 & 0 & 0 & 0 \\
	2 & 0 & -1 & 0 & 2 \\
	0 & 0 & 0 & -1 & 0 \\
	0 & 0 & 0 & 0 & -1
	\end{pmatrix}^k
	~:~k=0,1
	\right\}
	\end{equation}
	(actually the class in $G$). 
	Observe that in this case $\mathbf{C}= C(x,h)$, cf. (\ref{eq:Cc}). 
	Finally, the action of $\Cc$ on $S$ is given by 
	\begin{equation}
	D\cdot C=
	\begin{pmatrix}
	0 & a+2d & b & 1 & 0 \\
	-1 & 0 & 1 & 0 & 0 \\
	0 & 2a+c+2d & b & 0 & 0 \\
	-a-2d & 0 & d & 0 & -2a-c-2d \\
	-b & -d & 0 & -1 & -b 
	\end{pmatrix}.
	\end{equation}
	It is readily seen from (\ref{B2eq}) that $(\sigma^{-1}(0),\mathbf{C})$ is indeed a $\mathrm{B}_2$-singularity. 

\bibliographystyle{alpha}
\bibliography{bibtex1}

\end{document}